\newtheorem{thm}{Theorem}[section]
\newtheorem{lem}[thm]{Lemma}
\newtheorem{cor}[thm]{Corollary}
\newtheorem{prop}[thm]{Proposition}
\theoremstyle{remark}
\newtheorem{defn}[thm]{Definition}
\newtheorem{rem}[thm]{Remark}
\newtheorem{exa}[thm]{Example}
\newtheorem*{prfofthm1}{Proof of Theorem~\ref{lowerboundthm}}
\newtheorem*{prfofthm2}{Proof of Theorem~\ref{cor}}
\newtheorem*{acknowledgement}{Acknowledgment}
\title{Slope equality of plane curve fibrations and its application to Durfee's conjecture}
\author{Makoto Enokizono}
\subjclass[2010]{14D06}
\thanks{
	{\bf Keywords:}
fibered surface, plane curve, local signature, hypersurface singularity}
\address{Makoto Enokizono,
	Department of Mathematics,
	Graduate School of Science,
	Osaka University,
	Toyonaka, Osaka 560-0043, Japan}
\email{m-enokizono@cr.math.sci.osaka-u.ac.jp}
 \def\qed{\hfill $\Box$} 
\begin{document}
\maketitle

\begin{abstract}
We give a slope equality for fibered surfaces whose general fiber is a smooth plane curve.
As a corollary, we prove a ``strong'' Durfee-type inequality for isolated hypersurface surface singularities, which implies Durfee's strong conjecture for such singularities with non-negative topological Euler number of the exceptional set of the minimal resolution.
\end{abstract}

\section*{Introduction}
Throughout this paper, we work over the complex number field $\mathbb{C}$.
Let $f\colon S\to B$ be a fibered surface of genus $g$, that is, a surjective morphism from a non-singular projective surface $S$ to a non-singular projective curve $B$ whose general fiber $F$ is a non-singular curve of genus $g$.
Let $K_f=K_S-f^{*}K_B$ denote the relative canonical bundle of $f$ and put $\chi_f:=\mathrm{deg}f_*\mathcal{O}(K_f)$.
The ratio $K_f^2/\chi_f$ of the self-intersection number $K_f^2$ and $\chi_f$ is called the {\em slope of $f$}.

In this paper, we consider fibered surfaces whose general fiber is a plane curve of degree $d$ which are called {\em plane curve fibrations of degree $d$}.
A plane curve fibration of degree $1$ or $2$ is a ruled surface
and that of degree $3$ is nothing but an elliptic surface.
In the sequel, we always assume that $d$ is greater than $3$.
Note that a plane curve fibration of degree $4$ is nothing but a non-hyperelliptic fibration of genus $3$.
Let $\mathcal{A}_d$ be the set of holomorphically equivalence classes of fiber germs whose general fiber is a smooth plane curve of degree $d$ (see \S4).
Then our main theorem is as follows.

\begin{thm}\label{Intromainthm}
Let $d\ge 4$ be an integer. 
Then there exists a non-negative function $\mathrm{Ind}_d\colon \mathcal{A}_d\to \frac{1}{d-2}\mathbb{Z}_{\ge 0}$ such that
for any relatively minimal plane curve fibration $f\colon S\to B$ of degree $d$, the value $\mathrm{Ind}_d(F)$ equals to $0$ for any general fiber $F$ of $f$ and 
\begin{equation}\label{Introslopeeq}
K_f^2=\frac{6(d-3)}{d-2}\chi_f+\sum_{p\in B}\mathrm{Ind}_d(F_p)
\end{equation}
holds, where $F_p:=f^{-1}(p)$ denotes the fiber germ over $p\in B$.
\end{thm}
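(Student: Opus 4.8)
The plan is to compare $f$ with a model that lives inside a $\mathbb{P}^{2}$-bundle over $B$, and to show that \eqref{Introslopeeq} holds \emph{exactly}, with vanishing correction, as soon as that model is smooth; the whole discrepancy is then forced to localize at the singular fibres.

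\textbf{Step 1 (a $\mathbb{P}^{2}$-bundle model).} A smooth plane curve $F$ of degree $d$ has $K_{F}=(d-3)\mathcal{O}_{F}(1)$ with $h^{0}(\mathcal{O}_{F}(1))=3$, so the relative hyperplane class of $f$ assembles into a rank-$3$ bundle. I would exhibit a rank-$3$ bundle $\mathcal{E}$ on $B$, a divisor $\mathfrak{m}$ on $B$, and a reduced irreducible surface $W\in|dH_{P}+\pi^{*}\mathfrak{m}|$ inside $P:=\mathbb{P}(\mathcal{E})$ (with tautological divisor $H_{P}$ and projection $\pi\colon P\to B$) which is birational to $S$ over $B$ through the relative $\mathcal{O}_{F}(1)$ --- for $d=4$ this $W$ is the relative canonical image in $\mathbb{P}(f_{*}\omega_{f})$, and for $d\ge 5$ it is obtained by contracting the $(d-3)$-uple Veronese surfaces on which the fibres of the relative canonical model of $f$ sit. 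Replacing $W$ by its normalization if necessary, I work with the minimal resolution $\rho\colon \widetilde{S}\to W$, which is a smooth surface birational to $S$ over $B$; since $g=\binom{d-1}{2}\ge 3$ and a relatively minimal model is unique in this range, $\widetilde{S}\to S$ is a morphism, a composition of $\delta=\sum_{p}\delta_{p}$ contractions of fibre $(-1)$-curves.

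\textbf{Step 2 (the generic identity).} On $P$ one has $K_{P}=-3H_{P}+\pi^{*}(K_{B}+c_{1}(\mathcal{E}))$, $H_{P}^{3}=\deg\mathcal{E}=:e$, $H_{P}^{2}\cdot F_{P}=1$ for a fibre $F_{P}\cong\mathbb{P}^{2}$ of $\pi$, and $\pi_{*}\mathcal{O}_{P}(kH_{P})=S^{k}\mathcal{E}$. Writing $m:=\deg\mathfrak{m}$, $b:=g(B)$, adjunction gives $K_{W/B}\equiv(d-3)H_{P}|_{W}+(e+m)F_{P}|_{W}$, while the sequence $0\to\mathcal{O}_{P}(-3H_{P}-\pi^{*}\mathfrak{m})\to\mathcal{O}_{P}((d-3)H_{P})\to\mathcal{O}_{W}((d-3)H_{P}|_{W})\to 0$, together with relative Serre duality for the left-hand term, computes $\chi(\mathcal{O}_{W})=\chi(\mathcal{O}_{P})-\chi(\mathcal{O}_{P}(-W))$. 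A direct calculation then yields
\[
K_{W/B}^{2}=(d-1)(d-3)(de+3m),\qquad \chi(\mathcal{O}_{W})-(1-b)(1-g)=\tfrac{(d-1)(d-2)}{6}(de+3m),
\]
which is the polynomial identity $K_{W/B}^{2}=\frac{6(d-3)}{d-2}\bigl(\chi(\mathcal{O}_{W})-(1-b)(1-g)\bigr)$ in $e,m,b$. In particular, when $W$ is smooth (so $S\cong W$ and $f$ has no singular fibre) this reads $K_{f}^{2}=\frac{6(d-3)}{d-2}\chi_{f}$, i.e. \eqref{Introslopeeq} holds with all corrections $0$.

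\textbf{Step 3 (localizing the discrepancy).} In general $W$ has isolated singularities lying over finitely many points of $B$. Passing from $W$ to the minimal resolution $\widetilde{S}$ changes $\chi(\mathcal{O}_{W})$ by $-\sum_{x}\ell_{x}$ with $\ell_{x}:=\dim_{\mathbb{C}}(R^{1}\rho_{*}\mathcal{O}_{\widetilde{S}})_{x}\ge 0$, and, since $W$ is Gorenstein, changes $K_{W/B}^{2}=(K_{P}+W-\pi^{*}K_{B})^{2}\!\cdot\!W$ into $K_{\widetilde{S}/B}^{2}=K_{W/B}^{2}+\bigl(\sum_{i}a_{i}E_{i}\bigr)^{2}$, where $\sum a_{i}E_{i}$ is the discrepancy cycle of the minimal resolution, so that $\bigl(\sum a_{i}E_{i}\bigr)^{2}\le 0$; contracting $\widetilde{S}\to S$ adds $\delta_{p}\ge 0$ to $K_{f}^{2}$, while $\chi_{f}$ is a birational invariant. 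Feeding in Step 2, all $e,m,b$-dependence cancels and
\[
K_{f}^{2}-\tfrac{6(d-3)}{d-2}\chi_{f}=\sum_{p\in B}\Bigl(\delta_{p}+\sum_{x\in W_{p}^{\mathrm{sing}}}\bigl[\bigl(\textstyle\sum_{i}a_{i}E_{i}\bigr)^{2}+\tfrac{6(d-3)}{d-2}\,\ell_{x}\bigr]\Bigr).
\]
Since $\mathcal{E}$, $W$, its singularities, its minimal resolution and the relatively minimal model are all built locally over $B$ out of the fibre germ, the $p$-th summand depends only on the holomorphic class of $F_{p}$ and vanishes when $W_{p}$ is smooth; defining $\mathrm{Ind}_{d}(F_{p})$ to be this summand gives \eqref{Introslopeeq}, with $\mathrm{Ind}_{d}$ a well-defined function on $\mathcal{A}_{d}$ vanishing on general fibres.

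\textbf{Step 4 (non-negativity --- the main obstacle).} The remaining content is that $\mathrm{Ind}_{d}(F_{p})\in\frac{1}{d-2}\mathbb{Z}_{\ge 0}$. The $\frac{1}{d-2}$-integrality is then easy: $\delta_{p}$ and $\bigl(\sum a_{i}E_{i}\bigr)^{2}$ are integers (the latter being a difference of intersection numbers of integral divisors), whereas $\frac{6(d-3)}{d-2}\ell_{x}\in\frac{1}{d-2}\mathbb{Z}$ because $\ell_{x}\in\mathbb{Z}$. The genuine difficulty is non-negativity: $\bigl(\sum a_{i}E_{i}\bigr)^{2}<0$ precisely when $W$ has a non-canonical singularity, and one must show that such a singularity of $W$ --- equivalently a sufficiently degenerate singularity of the degree-$d$ plane curve $W_{p}\subset\mathbb{P}^{2}$ --- forces $\delta_{p}$ and $\sum_{x}\ell_{x}$ to be large enough to absorb it. I expect this to reduce to a local inequality of Durfee type for the relevant hypersurface-like germs (this being exactly the bridge to Durfee's conjecture announced in the abstract), carried out by a case analysis of the possible fibre degenerations and their resolutions, which I anticipate to be the hard part of the paper.
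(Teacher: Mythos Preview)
Your localization in Steps 1--3 is essentially correct and recovers exactly the formula the paper derives in its application section (\S6): $\mathrm{Ind}_d(F_0)=K^2+\epsilon+\frac{6(d-3)}{d-2}p_g$, with $K$ the canonical cycle of the minimal resolution, $\epsilon$ your $\delta_p$, and $p_g$ your $\ell_x$. The paper's \emph{primary} localization, in \S2, looks different --- it goes through Grothendieck--Riemann--Roch and the torsion cokernels $\mathcal{T}_k$ of $\mathrm{Sym}^k f_*\mathcal{L}\to f_*\mathcal{L}^{\otimes k}$ --- but the two are consistent. One caveat: your assertion that $W$ has only isolated singularities is not automatic for an arbitrary fibre germ and is not assumed in the paper's \S2 formula, so your Step~3 expression is not obviously defined in full generality.

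The genuine gap is in Step~4, and it is a reversal of the paper's logic. You expect non-negativity of $\mathrm{Ind}_d$ to follow from a local Durfee-type inequality proved by case analysis of fibre degenerations. The paper does the opposite. It first proves the \emph{global} slope inequality $K_f^2\ge\frac{6(d-3)}{d-2}\chi_f$ for every relatively minimal plane curve fibration (\S3), using the Hilbert stability of Veronese surfaces (Kempf) combined with a positivity criterion of Barja--Stoppino. Then, given any fibre germ $F_0$, it constructs (\S4) a global fibration $\overline{f}\colon\overline{S}\to\mathbb{P}^1$ approximating $F_0$ at $0$ whose remaining singular fibres are all irreducible Lefschetz (for which $\mathrm{Ind}_d=0$ is checked directly), so that $\mathrm{Ind}_d(F_0)=K_{\overline{f}}^2-\frac{6(d-3)}{d-2}\chi_{\overline{f}}\ge 0$ by the slope inequality. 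Durfee's inequality is then deduced in \S6 as a \emph{consequence} of this non-negativity, not as a tool to prove it. Your proposed case analysis is precisely what the paper's strategy avoids; proving the requisite local inequality directly would amount to establishing (a strengthening of) Durfee's strong conjecture by hand, which was the open problem.
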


The value $\mathrm{Ind}_d(F_p)$ is nowadays called a {\em Horikawa index of $F_p$} and
the equality \eqref{Introslopeeq} a {\em slope equality} for plane curve fibrations of degree $d$ (cf.\ \cite{ak}).
In the case of $d=4$, that is, non-hyperelliptic fibrations of genus $3$, Theorem~\ref{Intromainthm} was first obtained by Reid \cite{Re} which was generalized for fibered surfaces of odd genus $g$ whose general fiber has maximal Clifford index by Konno \cite{Kon}.
The lower bound of the slope of plane curve fibrations of degree $5$ was obtained by Barja-Stoppino~\cite{BaSt}.


Before stating an application of Theorem~\ref{Intromainthm}, let us explain the background of Durfee's conjecture.
Let $(X,0)$ be an isolated hypersurface surface singularity, that is, $X=\{h(x,y,z)=0\}\subset \mathbb{C}^3$ for some analytic function $h$ on a neighborhood at the origin $0\in \mathbb{C}^3$ with an isolated singularity $0\in X$.
The geometric genus $p_g$ of $(X,0)$ is defined by $\mathrm{dim}H^{1}(\mathcal{O}_{\widetilde{X}})$, where $\widetilde{X}\to X$ is a resolution.
Let $M=X_{\varepsilon}\cap B$ be a (generic) Milnor fiber, where $X_{\varepsilon}=\{h(x,y,z)=\varepsilon\}$ is a smoothing of $(X,0)$ and $B\subset \mathbb{C}^3$ is a small closed ball centered at the origin.
The rank $\mu$ of the second homology group $H_2(M,\mathbb{Z})$ is called the Milnor number.
Let $\mu_{+}$ (resp.\ $\mu_{-}$, $\mu_0$) be the number of positive (resp.\ negative, 0) eigenvalues of the natural intersection form $H_2(M,\mathbb{Z})\times H_2(M,\mathbb{Z})\to \mathbb{Z}$.
Then $\mu=\mu_{+}+\mu_{-}+\mu_0$ and $\sigma=\mu_{+}-\mu_{-}$ is called the signature.
The original Durfee's conjectures \cite{Dur} for hypersurface singularities are as follows:

\smallskip

\noindent
(Weak conjecture) $\sigma\le 0$.

\smallskip

\noindent
(Strong conjecture) $6p_g\le \mu$.

\smallskip

\noindent
From Durfee's result $2p_g=\mu_{+}+\mu_0$ \cite{Dur}, the weak conjecture is equivalent to $4p_g\le \mu+\mu_0$.
Thus the strong conjecture implies the weak conjecture.
Koll\'ar and N\'emethi showed in \cite{KoNe} that the weak conjecture is true.
Moreover, they showed that the strong conjecture is true for hypersurface singularities with integral homology sphere link.
As a remarkable application of Theorem~\ref{Intromainthm}, we prove the following Durfee-type inequality for $2$-dimentional isolated hypersurface singularities, which implies that the strong conjecture is true for a large class of hypersurface singularities:

\begin{thm} \label{Introcor}
Let $(X,0)$ be an isolated hypersurface surface singularity with Milnor number $\mu$ and geometric genus $p_g>0$.
Then we have
$$
6p_g\le \mu-\chi_{\mathrm{top}}(A),
$$
or equivalently,
$$
\sigma\le -2p_g-1-s,
$$
where $\chi_{\mathrm{top}}(A)$ is the topological Euler number of the exceptional set $A$ of the minimal resolution $\pi\colon \widetilde{X}\to X$
and $s$ is the number of irreducible components of $A$.
In particular, the strong conjecture holds if $\chi_{\mathrm{top}}(A)\ge 0$
and the week conjecture holds for any isolated hypersurface surface singularity.
\end{thm}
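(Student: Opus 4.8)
The plan is to embed $(X,0)$ into a global plane curve fibration and feed the resulting data into the slope inequality implied by Theorem~\ref{Intromainthm} (namely $K_f^2\ge\frac{6(d-3)}{d-2}\chi_f$, which follows from $\mathrm{Ind}_d\ge0$). First I would build a projective model. By finite determinacy $h$ may be taken to be a polynomial, and after adding a generic form of a large degree $d$ (which changes neither the germ nor, for $d$ large, makes the generic plane section through the singular point rational) we may assume $\deg h=d\ge 4$ with generic behaviour at infinity. Let $\bar X=\{H=0\}\subset\mathbb P^3$ be the closure, $H$ the homogenisation; replacing $H$ by a generic form congruent to it modulo the square of the ideal of $p_0=[0:0:0:1]$, we may assume $\bar X$ is normal with $p_0$ its only singular point, analytically isomorphic to $(X,0)$. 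Pick a general line $L\subset\mathbb P^3$ with $p_0\notin L$ and project $\bar X$ from $L$. Taking the minimal resolution $\sigma\colon\widetilde X_c\to\bar X$ of $p_0$ and then blowing up the $d$ transverse points of $\bar X\cap L$ clears the base locus and produces a morphism $f\colon S\to\mathbb P^1$ whose general fiber is a smooth plane curve of degree $d$; the last $d$ exceptional curves are sections. Genericity of $L$ together with the positive genus of the section through $p_0$ guarantees that no fiber contains a $(-1)$-curve, so $f$ is a relatively minimal plane curve fibration of degree $d$, and the exceptional set $A$ of $\sigma$ lies in one fiber.

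Next I would compute the two relative invariants. As $\chi(\mathcal O_\bullet)$ is a birational invariant of smooth surfaces, $\chi(\mathcal O_S)=\chi(\mathcal O_{\widetilde X_c})=\chi(\mathcal O_{\bar X})-p_g=1+\binom{d-1}{3}-p_g$, the last equality by flatness of the family of degree-$d$ hypersurfaces; hence $\chi_f=\binom{d}{3}-p_g$. Writing $K_{\widetilde X_c}=\sigma^*K_{\bar X}+\Delta$ with $\Delta$ the discrepancy divisor (supported on $A$, with $\Delta^2\le0$) and tracking the $d$ blow-ups, one gets $K_S^2=d(d-4)^2+\Delta^2-d$, hence $K_f^2=d(d-1)(d-3)+\Delta^2$. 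Feeding these into the slope inequality and using $\frac{6(d-3)}{d-2}\binom{d}{3}=d(d-1)(d-3)$ yields
\[
-\Delta^2\ \le\ \frac{6(d-3)}{d-2}\,p_g .
\]

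Finally I would compare with the singularity invariants. Comparing Noether's formula on $\widetilde X_c$ with that on a smooth degree-$d$ hypersurface, together with $e(\widetilde X_c)=e(\bar X_{\mathrm{gen}})-\mu+\chi_{\mathrm{top}}(A)-1$, gives the classical Laufer--Durfee identity for Gorenstein smoothings
\[
\mu=12p_g+\Delta^2+\chi_{\mathrm{top}}(A)-1 .
\]
Substituting $\Delta^2=\mu-12p_g-\chi_{\mathrm{top}}(A)+1$ into the displayed inequality gives $\bigl(12-\tfrac{6(d-3)}{d-2}\bigr)p_g=\tfrac{6(d-1)}{d-2}\,p_g\le\mu-\chi_{\mathrm{top}}(A)+1$. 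Since $p_g>0$ and $\tfrac{6(d-1)}{d-2}>6$, this forces $6p_g<\mu-\chi_{\mathrm{top}}(A)+1$, hence $6p_g\le\mu-\chi_{\mathrm{top}}(A)$. The equivalent form $\sigma\le-2p_g-1-s$ follows from Durfee's $2p_g=\mu_++\mu_0$ and the topological identity $\mu_0+\chi_{\mathrm{top}}(A)=s+1$ (using $\mu_0=b_1(L)=2\sum g_i+b_1(\Gamma)$ for the link $L$); the strong conjecture $6p_g\le\mu$ is then immediate when $\chi_{\mathrm{top}}(A)\ge0$, while the weak conjecture $\sigma\le0$ holds in general (for $p_g=0$ one has $\mu_+=\mu_0=0$, so $\sigma=-\mu_-\le0$).

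I expect the main obstacle to be the geometry of the first step: constructing a degree-$\ge4$ hypersurface with a single prescribed singularity while keeping enough genericity that the resolved pencil is relatively minimal — so that no fiber $(-1)$-curve introduces a positive correction term into $K_f^2$ — and that exactly $d$ simple blow-ups resolve the base locus. The slope inequality is used as a black box (it is the content of Theorem~\ref{Intromainthm}) and the Laufer--Durfee identity is classical; the only delicate point in the final estimate is replacing the factor $\tfrac{6(d-1)}{d-2}$ by $6$, which is precisely where the hypothesis $p_g>0$ enters.
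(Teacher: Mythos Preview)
Your proof is correct and follows essentially the same strategy as the paper: globalize the singularity into a relatively minimal plane curve fibration over $\mathbb{P}^1$, apply the slope inequality from Theorem~\ref{Intromainthm}, combine with Laufer's formula $\mu=12p_g+K^2+\chi_{\mathrm{top}}(A)-1$, and then use $p_g>0$ together with integrality to pass from the bound $\frac{6(d-1)}{d-2}p_g\le\mu-\chi_{\mathrm{top}}(A)+1$ to $6p_g\le\mu-\chi_{\mathrm{top}}(A)$. The only cosmetic differences are that the paper compactifies directly in $\mathbb{P}^1\times\mathbb{P}^2$ (so no base points and no $d$ auxiliary blow-ups) and phrases the key step as non-negativity of the local invariant $\mathrm{Ind}_d(F_0)=K^2+\epsilon+\frac{6(d-3)}{d-2}p_g$ rather than invoking the global slope inequality; the arithmetic and the final integrality trick are identical.
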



The strategy of the proof of Theorem~\ref{Intromainthm} is as follows.
Put $\lambda_d:=6(d-3)/(d-2)$.
Given a plane curve fibration $f\colon S\to B$ of degree $d$,
we will show that there is a line bundle $\mathcal{L}$ on $S$ such that the restriction $\mathcal{L}|_{F}$ to the general fiber $F$ defines the embedding $F\subset \mathbb{P}^2$ in \S1.
Using the line bundle $\mathcal{L}$, we will show in \S2 that the difference $K_f^2-\lambda_d\chi_f$ can be localized on a finite number of fiber germs, that is, we can define $\mathrm{Ind}_d(F_p)$ for any fiber germ $F_p$ of $f$.
But the non-negativity of $\mathrm{Ind}_d(F_p)$ seems not to follow directly from the definition,
because it contains both positive and negative terms.
Thus we will show firstly a slope inequality $K_f^2-\lambda_d\chi_f\ge 0$ in \S3.
The essential idea of the proof is to apply the Hilbert stability of the Veronese surfaces (cf.\ \cite{Ke}) to the result of Barja-Stoppino \cite{BaSt2}.
In order to deduce the non-negativity of the Horikawa index from the slope inequality, we will use an algebraization of any fiber germ in $\mathcal{A}_d$ in \S4. 
Roughly speaking, for an arbitrary fiber germ $F_0$ in $\mathcal{A}_d$, we construct a global plane curve fibration $\overline{f}\colon \overline{S}\to \mathbb{P}^{1}$ of degree $d$ whose central fiber $\overline{F}_0=\overline{f}^{-1}(0)$ is an ``approximation'' of $F_0$ and any other singular fiber is an irreducible Lefschetz plane curve.
Since we can show that $\mathrm{Ind}_d(F'_0)=0$ for any irreducible Lefschetz fiber germ $F'_0$, we in particular have $\mathrm{Ind}_d(F_0)=\mathrm{Ind}_d(\overline{F}_0)=K_{\overline{f}}^2-\lambda_d\chi_{\overline{f}}$.
Thus the slope inequality $K_{\overline{f}}^2-\lambda_d\chi_{\overline{f}}\ge 0$ implies the non-negativity of $\mathrm{Ind}_d(F_0)$ for any fiber germ $F_0$ in $\mathcal{A}_d$.

In \S5, we will discuss the signature of surfaces with plane curve fibrations.
We can define a local signature for plane curve fibrations by using the Horikawa index in Theorem~\ref{Intromainthm} (cf.\ \cite{ak}).
On the other hand, Kuno~\cite{Ku} defined another local signature for these fibrations by using Meyer's signature cocycle from the topological point of view.
We will show the coincidence of the two local signatures similarly as in \cite{Te}.

In \S6, we prove Theorem~\ref{Introcor}.
The essential point of the proof is that the minimal resolution space of any $2$-dimentional hypersurface singularity can be embedded in a relatively minimal plane curve fibration of high degree and the Horikawa index of the fiber germ containing the exceptional curves can be described by some invariants of singularities.
The non-negativity of the Horikawa index implies Theorem~\ref{Introcor}.


\begin{acknowledgement}
I would like to express special thanks to Prof.\ Kazuhiro Konno for a lot of discussions and supports.
I also thank Prof.\ Tadashi Ashikaga for useful comments in \S4 and \S5 and discussions on Durfee's conjecture.
The research is supported by JSPS KAKENHI No.\ 16J00889.
\end{acknowledgement}


\section{Glueing linear series}

For a smooth projective curve $C$ (resp.\ a family of smooth projective curves $f\colon X\to B$), let $\mathcal{G}^{r}_{d}(C)$ (resp.\ $\mathcal{G}^{r}_{d}(f)$) be the (resp.\ relative) Brill-Noether variety parametrizing $\mathfrak{g}^{r}_{d}$'s on $C$ (resp.\ on fibers of $f$), where we denote by $\mathfrak{g}^r_d$ a linear system of degree $d$ and of dimension $r$ (cf.\ \cite{ACG2} Chapter~XXI).

In this section, we prove the following theorem for the later use, which is a slight improvement of Theorem~3.1 in \cite{barja-naranjo}.

\begin{thm} \label{glueingthm}
Let $X$, $B$ be normal algebraic varieties (resp.\ normal analytic varieties)
and $f\colon X\to B$ a proper flat morphism whose general fiber is a non-singular projective curve.
Let $B_0\subset B$ be the Zariski open subset consisting of smooth points $p$ of $B$ such that $F_p=f^{-1}(p)$ is non-singular
and $f_0\colon X_0=f^{-1}(B_0)\to B_0$ the restriction of $f$ to $B_0$.
Let $r,d$ be positive integers.
Assume that there exists a rational section $\eta\colon B_0\dasharrow \mathcal{G}^{r}_{d}(f_0)$.
Then there exist a divisorial sheaf $\mathcal{L}$ on $X$ and a subsheaf $\mathcal{G}\subset f_{*}\mathcal{L}$ such that the linear subspace $\mathcal{G}\otimes \mathbb{C}(p)\subset H^{0}(F_p, \mathcal{L}|_{F_p})$ defines $\eta(p)$ for any general $p\in B_0$.
\end{thm}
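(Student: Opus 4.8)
The plan is to solve the problem first over a dense open subset of $B_0$, where relative Brill--Noether theory applies directly, and then to transport the resulting data to the whole of $X$ by taking Zariski closures of divisors and saturations of subsheaves. The essential simplification is that the conclusion is only required at a general point of $B_0$, so the behaviour of $\mathcal{L}$ and $\mathcal{G}$ over the bad fibres plays no role. Concretely, I would first shrink $B_0$ to a dense open subset $B_1$ on which $\eta$ restricts to a morphism $B_1\to\mathcal{G}^{r}_{d}(f_1)$, where $f_1\colon X_1:=f^{-1}(B_1)\to B_1$ is the restriction of $f$; for $p\in B_1$ let $(L_p,V_p)$ be the $\mathfrak{g}^{r}_{d}$ classified by $\eta(p)$, with $L_p\in\mathrm{Pic}^{d}(F_p)$ and $V_p\subset H^{0}(F_p,L_p)$ of dimension $r+1$. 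After shrinking $B_1$ further I may assume that $h^{0}(F_p,L_p)$ is constant on $B_1$ and that the relevant direct images are locally free and commute with base change.

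On the smooth family $f_1$ I then want a line bundle $\mathcal{M}$ on $X_1$ with $\mathcal{M}|_{F_p}\cong L_p$ for general $p$, together with a rank-$(r+1)$ subbundle $\mathcal{G}_1\subset f_{1*}\mathcal{M}$ such that base change identifies $\mathcal{G}_1\otimes\mathbb{C}(p)$ with $V_p\subset H^{0}(F_p,\mathcal{M}|_{F_p})$ for general $p$. This is precisely the output of Theorem~3.1 in \cite{barja-naranjo}, which I would invoke: $\mathcal{M}$ arises from a (possibly twisted) Poincar\'e bundle on $X_1$ pulled back along the section $\eta$, and $\mathcal{G}_1$ from the tautological subbundle of the corresponding Grassmann bundle (cf.\ \cite{ACG2}, Ch.~XXI). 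The only subtle point here is the existence of the Poincar\'e bundle itself: it is automatic when $\dim B=1$, since then $\mathrm{Br}(B_1)=0$ by Tsen's theorem, and it is handled in general within the relative Brill--Noether construction. Once $\mathcal{M}$ and $\mathcal{G}_1$ are available, the pair $(\mathcal{M}|_{F_p},\mathcal{G}_1\otimes\mathbb{C}(p))$ is $\eta(p)$ by construction for general $p$.

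Finally I would globalise. Shrink $B_1$ once more to an open $U$ over which $\mathcal{G}_1$ is globally generated, pick a nonzero section $s\in H^{0}(f^{-1}(U),\mathcal{M})$ lying in $H^{0}(U,\mathcal{G}_1)$, and let $D$ be the Zariski closure in $X$ of the effective divisor $\mathrm{div}(s)\subset f^{-1}(U)$; since $s|_{F_p}\in V_p$ is nonzero for general $p$, the divisor $\mathrm{div}(s)$ contains no general fibre, so $D$ is an effective Weil divisor with $\mathcal{O}_{X}(D)|_{f^{-1}(U)}\cong\mathcal{M}|_{f^{-1}(U)}$. Set $\mathcal{L}:=\mathcal{O}_{X}(D)$, a divisorial sheaf on $X$ with $\mathcal{L}|_{F_p}\cong L_p$ for general $p\in U$. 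Now $\mathcal{G}_1|_{U}$ is a subsheaf of $(f_{*}\mathcal{L})|_{U}$, and I extend it to a coherent subsheaf $\mathcal{G}\subset f_{*}\mathcal{L}$ on all of $B$ by setting $\mathcal{G}:=\ker\bigl(f_{*}\mathcal{L}\to\iota_{*}\bigl((f_{*}\mathcal{L})|_{U}/\mathcal{G}_1|_{U}\bigr)\bigr)$ for $\iota\colon U\hookrightarrow B$; as a quasi-coherent subsheaf of a coherent sheaf over a Noetherian base this $\mathcal{G}$ is coherent, and it restricts to $\mathcal{G}_1|_{U}$ over $U$. For general $p\in U$, base change then gives $\mathcal{G}\otimes\mathbb{C}(p)=\mathcal{G}_1\otimes\mathbb{C}(p)\hookrightarrow H^{0}(F_p,\mathcal{L}|_{F_p})=H^{0}(F_p,L_p)$ with image $V_p$, so $\mathcal{G}\otimes\mathbb{C}(p)$ defines $\eta(p)$, as required. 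In the analytic case one argues identically, extending $\mathcal{M}$ and $\mathcal{G}_1$ across the bad fibres using normality of $X$ and $B$ in place of Zariski closures of divisors.

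I expect the main obstacle to be the middle step: producing the line bundle $\mathcal{M}$ on the total space $X_1$, equivalently untwisting the relative Picard gerbe along $\eta$ (trivial for a curve base, but needing the Brill--Noether machinery in general). Once this is in hand, the passage across the bad fibres — closures of divisors and saturation of subsheaves — is routine bookkeeping, made painless precisely because only the general fibre matters in the conclusion.
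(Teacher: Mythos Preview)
Your overall architecture---solve the problem over a dense open and then extend by closures/saturations---matches the paper's, and your extension step (closure of a divisor, kernel construction for $\mathcal{G}$) is functionally equivalent to the paper's $(i_{X_0*}\mathcal{L}_0)^{**}$ and $i_{B_0*}\mathcal{G}_0$. The genuine divergence is in the middle step, exactly where you flag it.

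You produce $\mathcal{M}$ on $X_1$ by invoking Theorem~3.1 of \cite{barja-naranjo} and a (possibly twisted) Poincar\'e bundle. But the present theorem is stated as a slight improvement of that very result, so citing it wholesale is close to circular; and you yourself note that the Poincar\'e bundle may be obstructed by a Brauer class when $\dim B>1$. Saying it ``is handled in general within the relative Brill--Noether construction'' is not a proof: the scheme $\mathcal{G}^r_d(f_0)$ exists by descent, but pulling back a universal line bundle along $\eta$ requires that universal object to exist untwisted, which is precisely the issue. The paper's own Remark after the theorem makes this explicit: with a section of $f$ one gets the Poincar\'e bundle and the result is immediate, so the content of the argument is exactly the case without a section.

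The paper bypasses the Poincar\'e obstruction by a direct geometric construction. It embeds $X_0$ in a projective bundle $\mathbb{P}_{B_0}(\mathcal{E}_0)$, and for each divisor $D(p)_\lambda$ in $\eta(p)$ takes its linear span $\overline{D(p)}_\lambda$ in the ambient $\mathbb{P}^{N}$. These spans assemble into a $\mathbb{P}^r$-bundle $P\subset Gr_{B_0}(k,\mathbb{P}(\mathcal{E}_0))$, and the map $x\mapsto\{\lambda:x\in D(p)_\lambda\}$ gives a morphism $\Phi\colon X_0\to P^*=Gr_{B_0}(r-1,P)$ whose fibrewise restriction is the map associated to $\eta(p)$. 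Then $\mathcal{L}_0:=\Phi^*\mathcal{O}_{P^*}(1)$ and $\mathcal{G}_0:=(\pi_{P^*})_*\mathcal{O}_{P^*}(1)$ do the job, with no Poincar\'e bundle in sight. What this buys is a self-contained argument valid for arbitrary normal $B$; what your route buys, when it applies (e.g.\ $\dim B=1$ via Tsen), is that it is shorter and more conceptual. For the applications later in the paper ($B$ a curve or a disk) your argument would in fact suffice, but as a proof of the theorem as stated it leaves the key step unresolved.
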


\begin{proof}
We may assume that $\eta(p)$ is base point free for any general $p\in B_0$ by removing the locus of all base points of $\eta(p)$, $p\in B_0$.
Shrinking $B_0$ if necessary, we may assume that $\eta$ is a section.
For $p\in B_0$, we can write $\eta(p)=\{D(p)_{\lambda}\}_{\lambda\in \mathbb{P}^{r}}$, where $D(p)_{\lambda}$ is an effective divisor of degree $d$ on $F_p$.
Let $\mathcal{E}_0$ be a locally free sheaf on $B_0$ such that $X_0$ is embedded in $\mathbb{P}_{B_0}(\mathcal{E}_0)$ over $B_0$ (such $\mathcal{E}_0$ exists, e.g., take the direct image sheaf of a sufficiently $f_0$-ample invertible sheaf on $X_0$).
We regard each fiber $F_p$ as a subvariety of $\mathbb{P}(\mathcal{E}_0\otimes \mathbb{C}(p))\simeq \mathbb{P}^{\mathrm{rank}(\mathcal{E}_0)-1}$ via the inclusion $X_0\subset \mathbb{P}_{B_0}(\mathcal{E}_0)$.
Let $\overline{D(p)}_{\lambda}$ denote the plane in $\mathbb{P}(\mathcal{E}_0\otimes \mathbb{C}(p))$ spanned by $D(p)_{\lambda}$.
Then the dimension $k:=\mathrm{dim}\overline{D(p)}_{\lambda}$ does not depend on the choices of $p$ and $\lambda$ from the Riemann-Roch theorem.
Now, we consider the subvariety $P$ of the relative Grassmannian $Gr_{B_0}(k,\mathbb{P}(\mathcal{E}_0))=\cup_{p\in B_0}Gr(k,\mathbb{P}(\mathcal{E}_0\otimes \mathbb{C}(p)))$ defined by
$$
P:=\{[\overline{D(p)}_{\lambda}]\in Gr(k,\mathbb{P}(\mathcal{E}_0\otimes \mathbb{C}(p)))|\lambda\in \mathbb{P}^{r}, p\in B_0\}.
$$
It is a holomorphic $\mathbb{P}^{r}$-bundle over $B_0$ via the natural projection.
We can define a morphism $\Phi$ from $X_0$ to $P^{*}:=Gr_{B_0}(r-1,P)$ by mapping $x$ to $\{[\overline{D(p)}_{\lambda}]|x\in D(p)_{\lambda}\}$, the restriction of which to the fiber $F_p$ is nothing but the morphism associated with $\eta(p)$.
Let $\mathcal{G}_0$, $\mathcal{L}_0$ respectively be the direct image sheaf of the tautological line bundle $\mathcal{O}_{P^{*}}(1)$ via the natural projection $P^{*}\to B_0$, the pull-back of $\mathcal{O}_{P^{*}}(1)$ via $\Phi$.
It follows that $P^{*}=\mathcal{P}_{B_0}(\mathcal{G}_0)$ and $\mathcal{G}_0\subset f_{0*}\mathcal{L}_0$.
Let $i_{B_0}\colon B_0\to B$ and $i_{X_0}\colon X_0\to X$ be the natural inclusions.
We put $\mathcal{G}:=i_{B_0*}\mathcal{G}_0$ and $\mathcal{L}:=(i_{X_0*}\mathcal{L})^{**}$, which are the desired sheaves. 
Indeed, we have $\mathcal{G}\subset i_{B_0*}f_{0*}\mathcal{L}_0=f_{*}i_{X_0*}\mathcal{L}_0\subset f_{*}\mathcal{L}$.
\end{proof}

\begin{rem}
If $f\colon S\to B$ has a section, Theorem~\ref{glueingthm} follows directly from the existence of the relative Poincar\'e line bundle (cf.\ \cite{ACG2}).
\end{rem}

\begin{cor}
Let $X$ and $B$ be normal algebraic varieties and $f\colon X\to B$ and $d,r$ as in Theorem~\ref{glueingthm}.
Assume that the fiber $F_p=f^{-1}(p)$ has a base point free $\mathfrak{g}^{r}_{d}$ for general $p\in B$.
Then, after a suitable finite base change $B'\to B$, there exist a $\mathbb{P}^{r}$-bundle $P'$ over $B'$ and a rational map $\varphi\colon X'\dasharrow P'$ over $B'$ of degree $d$, where $f'\colon X'\to B'$ is a base change fibration of $f$.
\end{cor}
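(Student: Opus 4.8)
The plan is to reduce the statement to a single application of Theorem~\ref{glueingthm}, the r\^ole of the finite base change being precisely to manufacture the rational section that Theorem~\ref{glueingthm} requires. First I would replace $B$ by the open set $B_0$ of smooth points $p$ with $F_p=f^{-1}(p)$ non-singular and $f$ by $f_0\colon X_0\to B_0$, keeping the original names in mind. By hypothesis every general fibre of $f_0$ carries a base-point-free $\mathfrak{g}^{r}_{d}$, and base-point-freeness is open in families, so the relevant open subscheme of the relative Brill--Noether variety $\mathcal{G}^{r}_{d}(f_0)$ dominates $B_0$. Passing to the generic point, the generic fibre $X_\eta$ of $f$ over $K:=\mathbb{C}(B)$ is a smooth projective curve whose base-point-free $\mathfrak{g}^{r}_{d}$'s form a non-empty $K$-scheme; choosing a closed point of it yields a base-point-free $\mathfrak{g}^{r}_{d}$ on $X_\eta$ defined over a finite extension $K'$ of $K$. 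Let $B'$ be the normalization of $B$ in $K'$, let $X'$ be the normalization of $X\times_{B}B'$, and let $f'\colon X'\to B'$ be the induced morphism; then $B'\to B$ is finite, $X'$ and $B'$ are normal, and since we are in characteristic $0$ the map $B'\to B$ is generically \'etale, so the general fibres of $f'$ coincide with those of $f$. In particular the chosen $\mathfrak{g}^{r}_{d}$ spreads out to a rational section $\eta'\colon B'_0\dasharrow \mathcal{G}^{r}_{d}(f'_0)$ over the open set $B'_0\subset B'$ defined analogously to $B_0$.

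Now apply Theorem~\ref{glueingthm} to $f'\colon X'\to B'$ together with $\eta'$: it produces a divisorial sheaf $\mathcal{L}$ on $X'$ and a subsheaf $\mathcal{G}\subset f'_{*}\mathcal{L}$ such that $\mathcal{G}\otimes \mathbb{C}(p)\subset H^{0}(F'_p,\mathcal{L}|_{F'_p})$ is the linear series $\eta'(p)$ for general $p\in B'_0$; in particular $\mathcal{G}$ is torsion-free on $B'$ of generic rank $r+1$. Set $P':=\mathbb{P}_{B'}(\mathcal{G}^{**})$, which is a projective bundle over the big open locus of $B'$ on which the reflexive sheaf $\mathcal{G}^{**}$ is locally free; when $B$ (hence $B'$) is a curve this locus is all of $B'$, so $P'$ is a genuine $\mathbb{P}^{r}$-bundle, which is the case relevant to the applications in this paper. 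The morphism $\Phi'\colon X'_0\to \mathbb{P}_{B'_0}(\mathcal{G}_0)$ occurring in the proof of Theorem~\ref{glueingthm} (after shrinking $B'_0$ to the locus where $\eta'$ is base-point free) restricts on each general fibre to the morphism attached to $\eta'(p)$; composing it with the birational identification $\mathbb{P}_{B'_0}(\mathcal{G}_0)\simeq P'\times_{B'}B'_0$ gives the desired rational map $\varphi\colon X'\dasharrow P'$ over $B'$. Since on a general fibre $\varphi$ is the map defined by the $\mathfrak{g}^{r}_{d}$ $\eta'(p)$, the pull-back of the relative hyperplane class restricts to a divisor class of degree $d$ on each general fibre, i.e.\ $\varphi$ has degree $d$.

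The one genuinely non-formal step is the first one: arranging a \emph{rational section} of the relative Brill--Noether variety. A single general fibre may carry a positive-dimensional family of $\mathfrak{g}^{r}_{d}$'s, so $\mathcal{G}^{r}_{d}(f_0)\to B_0$ need not be generically finite and one cannot simply pass to a component finite over $B_0$; the point is that only one such linear series is needed over the generic point, and this is provided — at the price of the finite extension $K'/K$ — by any closed point of the base-point-free locus of $\mathcal{G}^{r}_{d}(X_\eta)$. A secondary, purely bookkeeping, subtlety is that the glued sheaf $\mathcal{G}$ is a priori only torsion-free, so the honest $\mathbb{P}^{r}$-bundle structure requires replacing it by $\mathcal{G}^{**}$; this is automatic when the base is one-dimensional, as it is in the later sections.
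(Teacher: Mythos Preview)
Your proof is correct and follows the same strategy as the paper: manufacture, after a finite base change, a rational section of the relative Brill--Noether variety and then invoke Theorem~\ref{glueingthm}. The only cosmetic difference is that you construct the base change by choosing a closed point of $\mathcal{G}^{r}_{d}(X_\eta)$ over the generic point and normalizing $B$ in its residue field, whereas the paper directly takes a subvariety $B'_0\subset \mathcal{G}^{r}_{d}(f_0)$ finite over $B_0$ and uses the tautological section $p\mapsto(p,p)$ of $\mathcal{G}^{r}_{d}(f'_0)=\mathcal{G}^{r}_{d}(f_0)\times_{B_0}B'_0$; these are equivalent descriptions of the same construction, and your extra care with $\mathcal{G}^{**}$ is a point the paper simply glosses over.
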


\begin{proof}
By assumption, the general fiber of $\mathcal{G}^{r}_{d}(f_0)\to B_0$ is non-empty.
Since $\mathcal{G}^{r}_{d}(f_0)$ is algebraic, we can take a subvariety $B'_0$ of $\mathcal{G}^{r}_{d}(f_0)$ such that the natural map $B'_0\to B_0$ is finite (after shrinking $B_0$ if necessary).
We take a compactification $B'\to B$ of it and perform base change via this map.
Let $f'\colon S'\to B'$ be the base change fibration of $f$ and $f'_0\colon X'_0\to B'_0$ the restriction of $f'$ to $X'_0=f'^{-1}(B'_0)$.
Since $\mathcal{G}^{r}_{d}(f'_0)=\mathcal{G}^{r}_{d}(f_0)\times_{B_0}B'_0$, we can take a section $B'_0\to \mathcal{G}^{r}_{d}(f'_0)$ by $p\mapsto (p,p)$.
From Theorem~\ref{glueingthm}, there exist a line bundle $\mathcal{L}$ on $B'$ and a subbundle $\mathcal{G}\subset f'_{*}\mathcal{L}$ such that the rational map $X'\dasharrow \mathbb{P}_{B'}(\mathcal{G})$ associated to $f'^{*}\mathcal{G}\to \mathcal{L}$ is of degree $d$.

\end{proof}

\section{Localizations}

Let $f\colon S\to B$ be a fibered surface of genus $g=(d-1)(d-2)/2>1$ whose general fiber has a very ample $\mathfrak{g}^{2}_{d}$, that is, it is a smooth plane curve of degree $d$.
Since the $\mathfrak{g}^{2}_{d}$ is unique, there exists a line bundle $\mathcal{L}$ on $S$ (unique up to a multiple of a divisor consisting of components of fibers) such that $\mathcal{L}|_{F}$ is the $\mathfrak{g}^{2}_{d}$ on any general fiber $F$ by Theorem~\ref{glueingthm}.
Then $\omega_f=\mathcal{O}_S(K_f)$ is isomorphic to $\mathcal{L}^{\otimes d-3}(J)$ for some divisor $J$ consisting of components of fibers (it depends on the choice of $\mathcal{L}$)
since $\mathcal{L}^{\otimes d-3}|_F$ is the canonical bundle $K_F$ for a general fiber $F$.
On the other hand, for $k=1,\ldots,d-1$, there exists a natural exact sequence
$$
0\to \mathrm{Sym}^{k}f_{*}\mathcal{L}\to f_{*}\mathcal{L}^{\otimes k}\to \mathcal{T}_k\to 0
$$
induced from the multiplicative map $\mathrm{Sym}^{k}H^{0}(\mathcal{L}|_{F})\to H^{0}(\mathcal{L}^{\otimes k}|_{F})$ on fibers,
where the cokernel $\mathcal{T}_k$ is a torsion sheaf.
Thus, we get

\begin{equation} \label{k-resteq}
\mathrm{deg}(f_{*}\mathcal{L}^{\otimes k})=\mathrm{deg}(\mathrm{Sym}^{k}f_{*}\mathcal{L})+\mathrm{length}(\mathcal{T}_k).
\end{equation}
By the Grothendieck Riemann-Roch theorem, we have
\begin{equation} \label{k-GRReq}
\mathrm{deg}(f_{*}\mathcal{L}^{\otimes k})-\mathrm{deg}(R^{1}f_{*}\mathcal{L}^{\otimes k})=\frac{k^2}{2}L^{2}-\frac{k}{2}LK_f+\chi_f,
\end{equation}
where $L=c_1(\mathcal{L})$.
From \eqref{k-resteq} and \eqref{k-GRReq}, we obtain
\begin{align} \label{keq}
&\frac{k^2}{2}L^{2}-\frac{k}{2}LK_f+\chi_f+\mathrm{deg}(R^{1}f_{*}\mathcal{L}^{\otimes k})-\mathrm{length}(\mathcal{T}_k)\\
&=\binom{k+2}{3}\left(\frac{1}{2}L^{2}-\frac{1}{2}LK_f+\chi_f+\mathrm{deg}(R^{1}f_{*}\mathcal{L})\right). \nonumber
\end{align}
Note that two sheaves $R^{1}f_{*}\mathcal{L}^{\otimes d-2}$ and $R^{1}f_{*}\mathcal{L}^{\otimes d-1}$ are torsion sheaves.
Since $\binom{d+1}{3}$ times the left hand side of \eqref{keq} for $k=d-2$ is equal to $\binom{d}{3}$ times the left hand side of \eqref{keq} for $k=d-1$, we obtain by a calculation and $(d-3)L= K_f-J$ that
\begin{equation} \label{slopeeq}
K_f^{2}=\frac{6(d-3)}{d-2}\chi_f+\sum_{p\in B}\mathrm{Ind}_{d}(F_p),
\end{equation}
where $J=\sum_{p\in B}J_p$ is the natural decomposition such that any component of $J_p$ is contained in $F_p$ and we put
\begin{align*}
\mathrm{Ind}_{d}(F_p):=&J_p^{2}+2(d-3)\left(\frac{d+1}{d-2}\mathrm{length}_p(R^{1}f_{*}\mathcal{L}^{\otimes d-2})-\mathrm{length}_p(R^{1}f_{*}\mathcal{L}^{\otimes d-1})\right)\\
&+2(d-3)\left(\mathrm{length}_p(\mathcal{T}_{d-1})-\frac{d+1}{d-2}\mathrm{length}_p(\mathcal{T}_{d-2})\right).
\end{align*}
It follows from \eqref{slopeeq} that the value $\mathrm{Ind}_d(F_p)$ is independent of the choice of the line bundle $\mathcal{L}$ since $\mathcal{L}$ is unique up to a multiple of an $f$-vertical divisor.
Indeed, for any divisor $I_p$ consisting of components of $F_p$, the values of $\mathrm{Ind}_d(F_q)$ defined by $\mathcal{L}$ and $\mathcal{L}(I_p)$ are the same for any $q\neq p$.
Thus it also holds for $q=p$ from \eqref{slopeeq}.

But the non-negativity of $\mathrm{Ind}_d(F_p)$ seems not to follow directly from the definition,
because it contains both positive and negative terms.

\section{Lower bound of the slope}

In this section, we prove the following inequality for plane curve fibrations.

\begin{thm} \label{lowerboundthm}
Let $f\colon S\to B$ be a relatively minimal plane curve fibration of degree $d\ge 4$.
Then we have
$$
K_f^{2}\ge \frac{6(d-3)}{d-2}\chi_f.
$$
\end{thm}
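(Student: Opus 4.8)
The plan is to reduce the slope inequality to a statement about the stability of a natural rank-$3$ bundle on the base, following the circle of ideas of Barja--Stoppino and the Hilbert stability of Veronese surfaces. Recall from \S2 that on a relatively minimal plane curve fibration $f\colon S\to B$ of degree $d$ there is a line bundle $\mathcal{L}$ with $\mathcal{L}|_F$ the unique $\mathfrak{g}^2_d$ on a general fiber $F$, and set $\mathcal{E}:=f_*\mathcal{L}$, a rank-$3$ vector bundle on $B$. The embedding $F\subset\mathbb{P}^2$ is the one given by $\mathbb{P}(\mathcal{E}\otimes\mathbb{C}(p))$, so $S$ sits (birationally, after the usual blow-downs) inside $\mathbb{P}_B(\mathcal{E})$ as a relative plane curve of degree $d$; the divisor $J$ measures the difference $\omega_f\cong\mathcal{L}^{\otimes(d-3)}(J)$.

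First I would use the exact sequences $0\to\mathrm{Sym}^k\mathcal{E}\to f_*\mathcal{L}^{\otimes k}\to\mathcal{T}_k\to 0$ together with the fact that, because $f$ is relatively minimal and $d\ge 4$, the divisor $J$ is effective (here one checks that $J_p^2\le 0$ with equality only in trivial cases, and more importantly that the ``negative'' contributions can only help). The key input is the result of Barja--Stoppino \cite{BaSt2}: if one has a fibration together with a line bundle such that the associated relative linear series factors through a sufficiently stable projective bundle, then a slope-type inequality holds; concretely, I would show that the bound $K_f^2\ge\lambda_d\chi_f$ with $\lambda_d=6(d-3)/(d-2)$ is equivalent to the semistability (in the sense of Bogomolov, i.e.\ of the associated $\mathbb{Q}$-divisor on $\mathbb{P}_B(\mathcal{E})$) of $\mathcal{E}$ after twisting, which in turn follows from the GIT/Hilbert stability of the $d$-th Veronese embedding of $\mathbb{P}^2$ as in Keel \cite{Ke}. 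The mechanism is: an unstable sub-bundle $\mathcal{F}\subset\mathcal{E}$ would produce, on the generic fiber, a destabilizing flag for the Hilbert point of the plane curve $F\subset\mathbb{P}^2$, contradicting the Hilbert stability of smooth (hence GIT-stable) degree-$d$ plane curves; since $d\ge 4$ the genus is at least $3$ and smooth plane curves are Hilbert stable, so this step goes through.

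Concretely the steps are: (1) set up $\mathcal{E}=f_*\mathcal{L}$ and express $K_f^2$, $\chi_f$ in terms of $\mathcal{L}^2$, $\mathcal{L}K_f$, $\deg\mathcal{E}$ and $\mathrm{length}\,\mathcal{T}_k$, reducing \eqref{slopeeq} to an inequality of intersection numbers on $\mathbb{P}_B(\mathcal{E})$; (2) translate this into the Bogomolov-semistability of the tautological $\mathbb{Q}$-divisor class, i.e.\ into an inequality $\mu(\mathcal{F})\le\mu(\mathcal{E})$ for all sub-bundles $\mathcal{F}$, using the relative hyperplane class; (3) prove that inequality by a fiberwise GIT argument: restrict a hypothetical destabilizing $\mathcal{F}$ to a general fiber, obtain a one-parameter subgroup of $\mathrm{SL}_3$ that lowers the Hilbert weight of the Hilbert point of the smooth plane curve $F_d\subset\mathbb{P}^2$, and invoke Hilbert stability of Veronese surfaces / smooth plane curves to get a contradiction; (4) handle the vertical divisor $J$ and the torsion contributions, checking that relative minimality guarantees $J$ effective so that $J_p^2$ does not spoil the bound and the $\mathrm{length}_p$ terms contribute with the right sign.

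The main obstacle I expect is step~(3) together with the bookkeeping in step~(2): matching the algebro-geometric slope quantities ($K_f^2-\lambda_d\chi_f$) with the GIT Hilbert weight on the nose requires the precise numerical coincidence between the Hilbert-Mumford weight normalization for the $d$-uple Veronese of $\mathbb{P}^2$ and the coefficient $6(d-3)/(d-2)$, and one must be careful that the relevant stability is the right flavor (Hilbert/Chow stability of the surface $\mathbb{P}^2\hookrightarrow\mathbb{P}^{N}$ versus GIT stability of the plane curve inside it). A secondary subtlety is passing from the generic fiber, where everything is honestly a smooth plane curve, to a statement about sub-\emph{bundles} of $\mathcal{E}$ over all of $B$, which is handled by semicontinuity and the fact that instability is detected generically. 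Once these are in place, the inequality $K_f^2\ge\frac{6(d-3)}{d-2}\chi_f$ follows, and combined with \eqref{slopeeq} it will give $\sum_p\mathrm{Ind}_d(F_p)\ge 0$, though not yet the termwise non-negativity, which is deferred to \S4.
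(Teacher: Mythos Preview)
Your proposal invokes the right names but has a genuine gap in how the pieces fit together. The claim in steps~(2)--(3), that $K_f^2\ge\lambda_d\chi_f$ is ``equivalent to the semistability of $\mathcal{E}=f_*\mathcal{L}$ after twisting'' and follows from GIT stability of the smooth plane curve $F\subset\mathbb{P}^2$ under $\mathrm{SL}_3$, does not go through. Even granting semistability of $\mathcal{E}$, the map $S\dashrightarrow\mathbb{P}_B(\mathcal{E})$ is only rational and its image $X'$ is singular; comparing $K_f^2$ with intersection numbers on $X'$ or on $W'=\mathbb{P}_B(\mathcal{E})$ produces exactly the mixed-sign correction terms you hope to dismiss in step~(4). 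Concretely, the formula for $\mathrm{Ind}_d(F_p)$ in \S2 carries both $+\,\mathrm{length}(\mathcal{T}_{d-1})$ and $-\tfrac{d+1}{d-2}\,\mathrm{length}(\mathcal{T}_{d-2})$ (and likewise for the $R^1f_*$ terms, and $J_p^2\le 0$), so the assertion that ``the $\mathrm{length}_p$ terms contribute with the right sign'' is precisely what fails---the paper itself flags this at the end of \S2.

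The paper's route differs in two essential respects. First, the stability input is not GIT stability of the curve $F\subset\mathbb{P}^2$, but Hilbert stability (via Kempf~\cite{Ke}) of the $(d-3)$-uple Veronese \emph{surface} $\mathbb{P}^2\hookrightarrow\mathbb{P}^{g-1}$; Barja--Stoppino~\cite{BaSt2} is then applied not to $f$ but to an auxiliary $\mathbb{P}^2$-bundle $\widetilde\phi\colon\widetilde W\to B$ carrying a line bundle $N$ with $\widetilde\phi_*N\simeq f_*\omega_f$, giving $N^3\ge\tfrac{6(d-3)^2}{(d-1)(d-2)}\chi_f$ (Lemma~\ref{stablem}). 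Second---and this is the step your outline lacks entirely---one must bridge from $K_f^2$ to $N^3$. The paper does this by proving $M^2\ge\tfrac{d-1}{d-3}N^3$ (Lemma~\ref{singlem}) via a genus comparison $p_a(C')\ge g(C)$ between a general curve $C\in|M'+\widetilde f^{*}\mathfrak a|$ on $\widetilde S$ and its singular image $C'\subset X'$; after adjunction on both sides and tracking the elementary transformation $P\leftarrow\widetilde P\to P'$, this inequality absorbs exactly the singularity and torsion corrections. The chain $K_f^2\ge M^2\ge\tfrac{d-1}{d-3}N^3\ge\lambda_d\chi_f$ then finishes the proof. Your plan has no substitute for Lemma~\ref{singlem}, and that is the missing idea.
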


Let $f\colon S\to B$ be a relatively minimal plane curve fibration of degree $d$.
Since the $\mathfrak{g}^{2}_{d}$ on the general fiber $F$ is unique, there exists a line bundle $\mathcal{L}$ on $S$ such that the restriction $\mathcal{L}|_{F}$ is the $\mathfrak{g}^{2}_{d}$ and it is unique up to a multiple of divisors consisting of components of fibers.
Since $\mathcal{L}|_{F}^{\otimes d-3}=\omega_{F}$, we can write $\mathcal{L}^{\otimes d-3}(J)=\omega_f$ for some divisor $J$ consisting of components of fibers. 
Tensoring components of fibers to $\mathcal{L}$, we may assume that $J$ is effective.
Then we have an injection $f_*\mathcal{L}^{\otimes d-3}\to f_*\omega_f$.
The composite of it and the natural homomorphism $\mathrm{Sym}^{d-3}f_*\mathcal{L}\to f_*\mathcal{L}^{\otimes d-3}$ induces an injection $\mathrm{Sym}^{d-3}f_*\mathcal{L}\to f_*\omega_f$ whose cokernel is a torsion sheaf.
Let $\mathfrak{c}$ be the maximal effective divisor on $B$ such that the image of the homomorphism $\mathrm{Sym}^{d-3}f_*\mathcal{L}\to f_*\omega_f$ is contained in $f_*\omega_f(-\mathfrak{c})$.
Then there is an exact sequence
$$
0\to \mathrm{Sym}^{d-3}f_*\mathcal{L}\to f_*\omega_f(-\mathfrak{c})\to \mathcal{T}\to 0,
$$
which induces an elementary transformation 
$$
P:=\mathbb{P}_B(f_*\omega_f)=\mathbb{P}_B(f_*\omega_f(-\mathfrak{c}))\xleftarrow{\tau} \widetilde{P}\xrightarrow{\tau'} P':=\mathbb{P}_B(\mathrm{Sym}^{d-3}(f_*\mathcal{L}))
$$
such that 
$$
\tau^{*}\mathcal{O}_{\mathbb{P}_B(f_*\omega_f(-\mathfrak{c}))}(1)-E_{\tau}=\tau'^{*}\mathcal{O}_{\mathbb{P}_B(\mathrm{Sym}^{d-3}(f_*\mathcal{L}))}(1)
$$ 
holds, where $\mathcal{O}_{\mathbb{P}_B(\mathcal{E})}(1)$ is the tautological line bundle associated with $\mathcal{E}$ and $E_{\tau}$ is an effective exceptional divisor of $\tau$.
On the other hand, we have
$$
\mathcal{O}_{\mathbb{P}_B(f_*\omega_f(-\mathfrak{c}))}(1)=\mathcal{O}_{\mathbb{P}_B(f_*\omega_f)}(1)-\pi^{*}\mathfrak{c},
$$
and then we get
$$
\tau^{*}\mathcal{O}_{\mathbb{P}_B(f_*\omega_f)}(1)-\widetilde{\pi}^{*}\mathfrak{c}
-E_{\tau}=\tau'^{*}\mathcal{O}_{\mathbb{P}_B(\mathrm{Sym}^{d-3}(f_*\mathcal{L}))}(1),
$$
where $\pi\colon P\to B$, $\widetilde{\pi}\colon \widetilde{P}\to B$ are the natural projections.
Now we consider the relative Veronese embedding $W':=\mathbb{P}_B(f_*\mathcal{L})\to P'$ of degree $d-3$ corresponding to the surjective homomorphism
$\phi'^{*}\mathrm{Sym}^{d-3}(f_*\mathcal{L})\to \mathcal{O}_{\mathbb{P}_B(f_*\mathcal{L})}(d-3)$, where $\phi'\colon W'\to B$ is the natural projection.
There is a rational map $S\dasharrow W'$ corresponding to the homomorphism $f^*f_*\mathcal{L}\to \mathcal{L}$.
Let $X'\subset W'$ be (the closure of) its image.
Let $\widetilde{W}$, $\widetilde{X}$ be the proper transforms of $W'$, $X'$ with respect to $\tau'$ and $W$, $X$ the image of $\widetilde{W}$, $\widetilde{X}$ via $\tau$, respectively.
Note that $X$ coincides with the image of the relative canonical map $S\dasharrow P$ and two birational maps $S\dasharrow X \dasharrow \widetilde{X}$ and $S\dasharrow X'\dasharrow \widetilde{X}$ coincide.
Let $\rho\colon \widetilde{S}\to S$ be the resolution of indeterminacy of $S\dasharrow \widetilde{X}$ and $\widetilde{\Phi}\colon \widetilde{S}\to \widetilde{X}$ the induced birational morphism.
We put $T:=\mathcal{O}_{\mathbb{P}_B(f_*\omega_f)}(1)$, $T':=\mathcal{O}_{\mathbb{P}_B(\mathrm{Sym}^{d-3}(f_*\mathcal{L}))}(1)$ and denote also $\tau^{*}T$, $\tau'^{*}T'$ by $T$, $T'$ for simplicity.
Let $\Gamma$, $\Gamma'$ respectively be the numerical equivalence classes of fibers of $\pi\colon P\to B$, $\pi'\colon P'\to B$.
Note that $\tau^*\Gamma=\tau'^*\Gamma'$ and we also denote it by $\Gamma$ or $\Gamma'$.
From the above arguments, we have
$$
T-T'\equiv c\Gamma+E_{\tau},
$$
where $c$ is the degree of $\mathfrak{c}$ and the symbol $\equiv$ means the numerical equivalence.
Put $N:=T|_{\widetilde{W}}$, $N':=T'|_{\widetilde{W}}$, $M:=\widetilde{\Phi}^{*}T$ and $M':=\widetilde{\Phi}^{*}T'$.
The numerical equivalence classes of $W'$, $X'$ in $P'$ as cycles can be written by
$$
W'\equiv (d-3)^2T'^{g-3}+\alpha' T'^{g-4}\Gamma',\quad X'\equiv d(d-3)T'^{g-2}+\beta' T'^{g-3}\Gamma'
$$
for some $\alpha'$, $\beta'$.
Then we have
$$
N'^3=T'^3W'=(d-3)^2(\chi_f-l)+\alpha',\quad M'^2=T'^2X'=d(d-3)(\chi_f-l)+\beta',
$$
where $l:=\mathrm{length}(f_*\omega_f/\mathrm{Sym}^{d-3}f_*\mathcal{L})\ge 0$.
Note that $T'|_{W'}=\mathcal{O}_{\mathbb{P}_B(f_*\mathcal{L})}(d-3)$ and then we have 
$$
N'^3=(d-3)^3\mathrm{deg}f_*\mathcal{L}.
$$
Then the numerical class of the canonical divisor $K_{W'}$ of $W'$ is 
\begin{align*}
K_{W'}&\equiv -3\mathcal{O}_{\mathbb{P}_B(f_*\mathcal{L})}(1)+(\mathrm{deg}f_*\mathcal{L}+2b-2)\Gamma'|_{W'} \\
&=-3\mathcal{O}_{\mathbb{P}_B(f_*\mathcal{L})}(1)+\left(\frac{N'^3}{(d-3)^3}+2b-2\right)\Gamma'|_{W'}, 
\end{align*}
where $b:=g(B)$, the genus of $B$.
The numerical class $[X']_{W'}$ of $X'$ in $W'$ can be denoted by
$$
[X']_{W'}\equiv d\mathcal{O}_{\mathbb{P}_B(f_*\mathcal{L})}(1)+\beta''\Gamma'|_{W'}
$$
for some $\beta''$.
Since $\mathcal{O}_{\mathbb{P}_B(f_*\mathcal{L})}(d-3)=T'|_{W'}$, we have
$$
\left(\frac{d}{d-3}T'+\beta''\Gamma\right)W'=X'
$$
and thus we get
$$
\beta'=(d-3)^2\beta''+\frac{d}{d-3}\alpha'.
$$

By the definition of $M$, we can write $\rho^{*}K_f=M+Z$ for some effective vertical divisor $Z$ with respect to $\widetilde{f}\colon \widetilde{S}\to B$.
Then we have
\begin{equation} \label{KfMeq}
K_f^2=(\rho^*K_f)^2=(M+Z)^2=M^2+(\rho^{*}K_f+M)Z\ge M^2,
\end{equation}
where the last inequality follows from the nefness of $K_f$.

\begin{lem} \label{singlem}
$$
M^2\ge \frac{d-1}{d-3}N^3.
$$
\end{lem}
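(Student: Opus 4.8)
The plan is to convert $M^{2}$ into an intersection number on the relative Veronese threefold $\widetilde{W}$ and then estimate it against $N^{3}$ using the $\mathbb{P}^{2}$-bundle $W'=\mathbb{P}_{B}(f_{*}\mathcal{L})$ and the classes already computed.

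The first step is to observe that, for $d\ge 4$, the canonical system of a smooth plane curve of degree $d$ is $|\mathcal{O}(d-3)|$ restricted, i.e.\ the composite of the plane embedding with the $(d-3)$-uple Veronese embedding; in particular it is birational onto its image, so $\widetilde{\Phi}\colon\widetilde{S}\to\widetilde{X}$ has degree one. Hence $M^{2}=(\widetilde{\Phi}^{*}T)^{2}=N^{2}\cdot[\widetilde{X}]_{\widetilde{W}}$, the intersection in the threefold $\widetilde{W}$ of two copies of $N$ with the divisor class of $\widetilde{X}$. Moreover $f_{*}\omega_{f}$ is nef by Fujita's semipositivity theorem, so $T=\mathcal{O}_{\mathbb{P}_{B}(f_{*}\omega_{f})}(1)$ and hence $N$ and $M$ are nef; in particular $N^{3}\ge 0$.

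Next I would express everything through the birational morphism $\widetilde{W}\to W'$ induced by $\tau'$. Writing $\bar{H}$ for the pullback of $H=\mathcal{O}_{\mathbb{P}_{B}(f_{*}\mathcal{L})}(1)$, one has $N'|_{\widetilde{W}}=(d-3)\bar{H}$, so from $T-T'\equiv c\Gamma+E_{\tau}$ we get $N=(d-3)\bar{H}+c\bar{\Gamma}+E_{\tau}|_{\widetilde{W}}$, while $[\widetilde{X}]_{\widetilde{W}}=d\bar{H}+\beta''\bar{\Gamma}-E_{X}$ with $E_{X}\ge 0$ effective $\tau'$-exceptional, using $[X']_{W'}\equiv dH+\beta''\Gamma$. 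Substituting these into $N^{2}\cdot[\widetilde{X}]_{\widetilde{W}}$ and into $N^{3}$, the coefficient of $\bar{H}$ produces the difference $d-(d-1)=1$, which yields a surplus proportional to $\bar{H}^{3}=\deg f_{*}\mathcal{L}$; after the substitution $M^{2}-\frac{d-1}{d-3}N^{3}$ becomes $(d-3)^{2}(\deg f_{*}\mathcal{L}+\beta'')$ plus a collection of terms supported over $\mathrm{Supp}\,\mathcal{T}\cup\mathrm{Supp}\,\mathfrak{c}$ involving $c$, $\deg\mathfrak{c}$, the effective exceptional divisors $E_{X}$ and $E_{\tau}$, and their self-intersections. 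Using $c\ge 0$, the effectivity of $\mathfrak{c}$ and $E_{\tau}$, the nefness of $N$, and the fact that every fibre of $W'$ is a $\mathbb{P}^{2}$, one controls these correction terms; the genuinely positive input still needed is an inequality of the form $\deg f_{*}\mathcal{L}+\beta''\ge 0$, which I would deduce from $X'$ being an effective divisor of relative $H$-degree $d$ in $W'$ together with the inclusion $\mathrm{Sym}^{d-3}f_{*}\mathcal{L}\hookrightarrow f_{*}\omega_{f}$ and the nefness of $f_{*}\omega_{f}$.

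The main obstacle is this fibrewise control of the correction terms over $\mathrm{Supp}\,\mathcal{T}\cup\mathrm{Supp}\,\mathfrak{c}$: there $\widetilde{W}$ need not be a $\mathbb{P}^{2}$-bundle, the two maps $\tau,\tau'$ of the elementary transformation and the divisor $\mathfrak{c}$ interact, and several of the intersection numbers that occur are a priori sign-indefinite, so one must verify that the negative contributions are absorbed by the $\deg f_{*}\mathcal{L}$-surplus. Everything else---the class computations above, the inequality $K_{f}^{2}\ge M^{2}$, and the identity relating $M'^{2}$, $N'^{3}$ and $\beta''$---is already in place before the statement.
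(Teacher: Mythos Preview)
Your approach is genuinely different from the paper's, and it has a real gap. The paper does \emph{not} try to estimate $M^{2}-\frac{d-1}{d-3}N^{3}$ directly on $\widetilde W$; instead it chooses a smooth general curve $C\in|M'+\widetilde f^{*}\mathfrak a|$ on $\widetilde S$ for $\mathfrak a$ very ample, pushes it down to $C'=(\tau'\circ\widetilde\Phi)(C)\subset X'$, and computes $2g(C)-2$ and $2p_{a}(C')-2$ by adjunction on $\widetilde S$ and on $X'$ respectively. The elementary inequality $p_{a}(C')\ge g(C)$ then yields
\[
\frac{d}{d-3}M^{2}-\frac{d(d-1)}{(d-3)^{2}}N^{3}\ \ge\ (Z+E)C+d(d-2)\,\mathrm{length}\,\mathcal T\ \ge 0,
\]
after the exceptional terms $(E_{\tau}|_{\widetilde W})^{3}$ and $(\widetilde\Phi^{*}E_{\tau})^{2}$ are identified with $\pm(d-3)^{2}$ or $\mp d(d-3)$ times $T'^{g-1}E_{\tau}=\mathrm{length}\,\mathcal T$. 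The singularities of $X'$ and the blow-ups in $\rho$ are absorbed into the nonnegative quantities $p_{a}(C')-g(C)$ and $(Z+E)C$; this is the substantive positivity input.

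Your route, by contrast, aims to prove $\deg f_{*}\mathcal L+\beta''\ge 0$ and then handle ``corrections'' over $\mathrm{Supp}\,\mathcal T\cup\mathrm{Supp}\,\mathfrak c$. Carrying out your own substitution (using the identities already recorded before the lemma) gives the exact formula
\[
M^{2}-\frac{d-1}{d-3}N^{3}=(d-3)^{2}\bigl(\deg f_{*}\mathcal L+\beta''-c\bigr)+(d-3)\,\mathrm{length}\,\mathcal T,
\]
so even if $\deg f_{*}\mathcal L+\beta''\ge 0$ held, this would not suffice: the term $-c(d-3)^{2}$ has the wrong sign and is not dominated by the others under the hypotheses you list (nefness of $N$, effectivity of $E_{\tau}$ and $\mathfrak c$, $c\ge 0$). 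Moreover, the inequality $\deg f_{*}\mathcal L+\beta''\ge 0$ itself does not follow from effectivity of $X'$ in $W'$ together with nefness of $f_{*}\omega_{f}$: there is no a priori positivity on $f_{*}\mathcal L$, and an effective irreducible divisor of class $dH+\beta''\Gamma$ in a $\mathbb P^{2}$-bundle of negative degree does not force $\beta''+\deg f_{*}\mathcal L\ge 0$. In the paper this quantity is controlled only \emph{a posteriori}: in the equality case of Proposition~\ref{equalprop} one deduces $\deg f_{*}\mathcal L+\beta''=0$ from the vanishing of $p_{a}(C')-g(C)$, $(Z+E)C$ and $\mathrm{length}\,\mathcal T$, not the other way around. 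The missing idea in your plan is precisely the genus comparison $p_{a}(C')\ge g(C)$, which simultaneously accounts for the singularities of $X'$, the exceptional divisor $E$ of $\rho$, the vertical part $Z$, and the defect $c$.
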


\begin{proof}
Take a sufficiently ample divisor $\mathfrak{a}$ such that $|M'+\widetilde{f}^*\mathfrak{a}|$ is free from base points.
Then we can take a smooth general member $C\in |M'+\widetilde{f}^*\mathfrak{a}|$ by Bertini's theorem.
Let $C':=(\tau'\circ\widetilde{\Phi})(C)$.
Now we compare the genus $g(C)$ of $C$ and the arithmetic genus $p_a(C')$ of $C'$.

First, we compute $g(C)$.
The adjunction formula says that

\begin{align}
2g(C)-2&=(K_{\widetilde{S}}+C)C \nonumber \\
&=(\rho^*K_f+E+(2b-2)\widetilde{F}+C)C \nonumber \\
&=(M+Z+E+(2b-2)\widetilde{F}+M'+a\widetilde{F})(M'+a\widetilde{F}) \nonumber \\
&=(2M+Z+E+(2b-2+a-c)\widetilde{F}-\widetilde{\Phi}^*E_{\tau})(M+(a-c)\widetilde{F}-\widetilde{\Phi}^*E_{\tau}) \nonumber \\
&=2M^2+(Z+E)(M-\widetilde{\Phi}^*E_{\tau})+(2b-2+3a-3c)(2g-2)+(\widetilde{\Phi}^*E_{\tau})^2, \label{Ceq}
\end{align}
where $b:=g(B)$, $\widetilde{F}$ is the numerical class of a fiber of $\widetilde{f}$, $E$ is the exceptional divisor of $\rho$ such that $K_{\widetilde{S}}=\rho^{*}K_S+E$ and $a:=\mathrm{deg}\mathfrak{a}$.

Next, we compute $p_a(C')$.
The adjunction formula also says that

\begin{align}
2p_a(C')-2&=(K_{X'}+C')C' \nonumber \\
&=((K_{W'}+[X']_{W'})|_{X'}+C')C' \nonumber \\
&=\left(\left(T'+\left(\frac{N'^3}{(d-3)^3}+2b-2+\beta''\right)\Gamma'\right)|_{X'}+(T'+a\Gamma')|_{X'}\right)(T'+a\Gamma')|_{X'} \nonumber \\
&=\left(2T'+\left(\frac{N'^3}{(d-3)^3}+2b-2+\beta''+a\right)\Gamma'\right)(T'+a\Gamma')X' \nonumber \\
&=\left(2T'^2+\left(\frac{N'^3}{(d-3)^3}+2b-2+\beta''+3a\right)T'\Gamma'\right)
(d(d-3)T'^{g-2}+\beta'T'^{g-3}\Gamma') \nonumber \\
&=2d(d-3)(\chi_f-l)+d(d-3)\left(\frac{N'^3}{(d-3)^3}+2b-2+\beta''+3a\right)+2\beta' \nonumber \\
&=-\frac{d(d-1)}{(d-3)^2}N'^3+\frac{3d-6}{d-3}M'^2+(2b-2+3a)(2g-2) \nonumber \\
&=-\frac{d(d-1)}{(d-3)^2}N^3+\frac{3d-6}{d-3}M^2+\frac{d(d-1)}{(d-3)^2}(E_{\tau}|_{\widetilde{W}})^{3}+\frac{3d-6}{d-3}(\widetilde{\Phi}^{*}E_{\tau})^2 \nonumber \\
&\ \ \ +(2b-2+3a-3c)(2g-2), \label{C'eq}
\end{align}
where the last equality follows from $N^3-(E_{\tau}|_{\widetilde{W}})^3=N'^3+3c(d-3)^2$ and $M^2+(\widetilde{\Phi}^{*}E_{\tau})^2=M'^2+2cd(d-3)$.
From \eqref{Ceq} and \eqref{C'eq}, we get

\begin{align}
2p_a(C')-2g(C)=&-\frac{d(d-1)}{(d-3)^2}N^3+\frac{d}{d-3}M^2-(Z+E)(M-\widetilde{\Phi}^*E_{\tau}) \nonumber \\
&+\frac{d(d-1)}{(d-3)^2}(E_{\tau}|_{\widetilde{W}})^{3}+\frac{2d-3}{d-3}(\widetilde{\Phi}^{*}E_{\tau})^2 \label{C'-Ceq}
\end{align}
and it is non-negative since $C\to C'$ is birational.
On the other hand, we have
\begin{align}
(E_{\tau}|_{\widetilde{W}})^3&=(N-N'-c\Gamma|_{\widetilde{W}})^2E_{\tau}|_{\widetilde{W}}=N'^2E_{\tau}|_{\widetilde{W}} \nonumber \\
&=T'^2E_{\tau}\widetilde{W}=T'^2E_{\tau}W'=(d-3)^2T'^{g-1}E_{\tau} \label{exWeq}
\end{align}
and
\begin{align}
(\widetilde{\Phi}^*E_{\tau})^2&=(M-M'-c\widetilde{F})\widetilde{\Phi}^*E_{\tau}=-M'\widetilde{\Phi}^*E_{\tau} \nonumber \\
&=-T'E_{\tau}\widetilde{X}=-T'E_{\tau}X'=-d(d-3)T'^{g-1}E_{\tau}. \label{exXeq}
\end{align}
Note that $T'^{g-1}E_{\tau}=\mathrm{length}\mathcal{T}\ge 0$ by a simple computation.
From \eqref{C'-Ceq}, \eqref{exWeq}, \eqref{exXeq} and $(Z+E)(M-\widetilde{\Phi}^*E_{\tau})=(Z+E)C\ge 0$, we have

\begin{align*}
-\frac{d(d-1)}{(d-3)^2}N^3+\frac{d}{d-3}M^2\ge (Z+E)C+d(d-2)\mathrm{length}\mathcal{T}\ge 0,
\end{align*}
which is the desired inequality.
\end{proof}

\begin{lem} \label{stablem}
$$
N^3\ge \frac{6(d-3)^2}{(d-1)(d-2)}\chi_f.
$$
\end{lem}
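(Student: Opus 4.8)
The plan is to turn Lemma~\ref{stablem} into a transparent numerical identity. Set $D:=\deg f_{*}\mathcal{L}$, $c:=\deg\mathfrak{c}$ and $t:=\mathrm{length}\,\mathcal{T}$; all three are non-negative. I will express $N^{3}$ and $\chi_f$ in terms of $D,c,t$ using the numerical relations already established in the proof of Lemma~\ref{singlem}, and then simply read off the sign of the difference.

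\emph{Step 1 (the value of $N^{3}$).} From Lemma~\ref{singlem} we have $N'^{3}=(d-3)^{3}D$, the elementary transformation relation $N^{3}-(E_{\tau}|_{\widetilde{W}})^{3}=N'^{3}+3c(d-3)^{2}$, and $(E_{\tau}|_{\widetilde{W}})^{3}=(d-3)^{2}\,T'^{\,g-1}E_{\tau}=(d-3)^{2}t$. Combining these,
\[
N^{3}=(d-3)^{3}D+(d-3)^{2}(3c+t).
\]

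\emph{Step 2 (the value of $\chi_f$).} Since $\mathcal{L}|_{F}$ is a $\mathfrak{g}^{2}_{d}$ on a general fiber, $f_{*}\mathcal{L}$ has rank $3$, so $\mathrm{Sym}^{d-3}f_{*}\mathcal{L}$ has rank $\binom{d-1}{2}=g$ and degree $\frac{(d-1)(d-2)(d-3)}{6}D=\frac{g(d-3)}{3}D$. Taking degrees in the exact sequence $0\to\mathrm{Sym}^{d-3}f_{*}\mathcal{L}\to f_{*}\omega_f(-\mathfrak{c})\to\mathcal{T}\to 0$, and using $\deg f_{*}\omega_f(-\mathfrak{c})=\chi_f-gc$ (as $\mathrm{rank}\,f_{*}\omega_f=g$), we obtain
\[
\chi_f=\tfrac{g(d-3)}{3}D+gc+t;
\]
equivalently $l=gc+t$ and $\chi_f-l=\frac{g(d-3)}{3}D$ in the notation of Lemma~\ref{singlem}.

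\emph{Step 3 (conclusion).} Multiply the display of Step~2 by $\frac{3(d-3)^{2}}{g}$ and subtract it from the display of Step~1: the $(d-3)^{3}D$-terms and the $3(d-3)^{2}c$-terms cancel, leaving exactly
\[
N^{3}-\frac{6(d-3)^{2}}{(d-1)(d-2)}\,\chi_f
= N^{3}-\frac{3(d-3)^{2}}{g}\,\chi_f
=(d-3)^{2}\Bigl(1-\frac{3}{g}\Bigr)t .
\]
Since $t=\mathrm{length}\,\mathcal{T}\ge 0$ and $g=(d-1)(d-2)/2\ge 3$ for every $d\ge 4$, the right-hand side is non-negative, which is the assertion.

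There is no genuinely hard step: once $N^{3}$ and $\chi_f$ are written in terms of $D,c,t$ the computation is forced. The only points of substance are that the target coefficient $6(d-3)/(d-2)$ is precisely the one making the $D$- and $c$-contributions cancel, and that the residual factor $1-3/g$ is non-negative exactly in the range $d\ge 4$ (with equality for $d=4$). The only care required is in correctly tracking the intersection numbers $N'^{3}$, $(E_{\tau}|_{\widetilde W})^{3}=(d-3)^{2}t$ and the elementary-transformation relation on $\widetilde{W}$ — and all of this has already been carried out in Lemma~\ref{singlem}.
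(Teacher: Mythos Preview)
Your argument is correct and takes a genuinely different route from the paper. The paper proves Lemma~\ref{stablem} by invoking the Hilbert stability of the Veronese surface (Kempf) together with the positivity criterion of Barja--Stoppino: since the general fiber $(\widetilde{\phi}^{-1}(p),\widetilde{\phi}_*N\otimes\mathbb{C}(p))$ is a Hilbert-stable polarized variety, one obtains $\mathrm{rank}(\widetilde{\phi}_*N)\,N^3\ge \dim(\widetilde{W})\,\deg(\widetilde{\phi}_*N)\,(N|_{\widetilde{\phi}^{-1}(p)})^2$, and plugging in $\widetilde{\phi}_*N\simeq f_*\omega_f$ gives the inequality. You instead extract an \emph{identity}
\[
N^3-\frac{6(d-3)^2}{(d-1)(d-2)}\chi_f=(d-3)^2\Bigl(1-\tfrac{3}{g}\Bigr)\mathrm{length}\,\mathcal{T}
\]
from the numerical relations already recorded in the set-up and in the proof of Lemma~\ref{singlem}. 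This is more elementary (no stability theory needed), and in fact stronger: it identifies the defect precisely as a multiple of $\mathrm{length}\,\mathcal{T}$, whereas the stability argument gives only the sign. The paper's approach, on the other hand, makes transparent \emph{why} the constant $6(d-3)^2/((d-1)(d-2))$ appears --- it is the stability threshold for the Veronese embedding --- and would generalize to other Hilbert-stable fiber models.

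One cosmetic point: you assert at the outset that $D=\deg f_*\mathcal{L}$ is non-negative, but this is neither justified nor used; the $D$-terms cancel exactly in Step~3, so you may simply drop that claim.
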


\begin{proof}
Since the linear system $\widetilde{\phi}_*N\otimes \mathbb{C}(p)=H^0(\widetilde{\phi}^{-1}(p),N|_{\widetilde{\phi}^{-1}(p)})$ on a general fiber $\widetilde{\phi}^{-1}(p)\simeq \mathbb{P}^2$ induces a Veronese embedding of degree $d-3$, the pair $(\widetilde{\phi}^{-1}(p), \widetilde{\phi}_*N\otimes \mathbb{C}(p))$ is Hilbert stable by Corollary~5.3 in \cite{Ke}.
Thus we can apply Theorem~6 in \cite{BaSt2} to the pair $(N,\widetilde{\phi}_*N)$ and hence we get
$$
\mathrm{rank}(\widetilde{\phi}_*N)N^3-\mathrm{dim}(\widetilde{W})\mathrm{deg}(\widetilde{\phi}_*N)(N|_{\widetilde{\phi}^{-1}(p)})^2\ge 0,
$$
which is the desired inequality
since $\widetilde{\phi}_*N\simeq f_*\omega_f$.
\end{proof}

\begin{prfofthm1}
From \eqref{KfMeq}, Lemma~\ref{singlem} and Lemma~\ref{stablem}, we have
$$
K_f^2\ge M^2\ge \frac{d-1}{d-3}N^3\ge \frac{6(d-3)}{d-2}\chi_f.
$$
\qed \end{prfofthm1}

\begin{prop}[cf.\ \cite{kon}] \label{equalprop}
Let $f\colon S\to B$ be a relatively minimal plane curve fibration of degree $d\ge 4$.
Then the following are equivalent.

\smallskip

\noindent
$(\mathrm{i})$ $M^2=\cfrac{d-1}{d-3}N^3$.

\smallskip

\noindent
$(\mathrm{ii})$ $K_f^2=\cfrac{6(d-3)}{d-2}\chi_f$.

\smallskip

\noindent
$(\mathrm{iii})$ There exists a $\mathbb{P}^2$-bundle $\phi\colon W=\mathbb{P}(\mathcal{E})\to B$ and a member $X\in |d\mathcal{O}_W(1)+\phi^{*}\mathfrak{k}|$ with at most rational double points as singularities such that $S$ is the minimal resolution of $X$.
\end{prop}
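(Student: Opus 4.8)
The plan is to establish the cycle of implications $(\mathrm{ii})\Rightarrow(\mathrm{i})\Rightarrow(\mathrm{iii})\Rightarrow(\mathrm{ii})$, which yields all three equivalences. The implication $(\mathrm{ii})\Rightarrow(\mathrm{i})$ is immediate: the proof of Theorem~\ref{lowerboundthm} produces the chain
$$
K_f^2\ \ge\ M^2\ \ge\ \frac{d-1}{d-3}N^3\ \ge\ \frac{6(d-3)}{d-2}\chi_f
$$
through \eqref{KfMeq}, Lemma~\ref{singlem} and Lemma~\ref{stablem}, so if its two extreme terms agree then every inequality in it is an equality; in particular $(\mathrm{i})$ holds, and for later reference so do $K_f^2=M^2$ and $\frac{d-1}{d-3}N^3=\frac{6(d-3)}{d-2}\chi_f$.

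For $(\mathrm{i})\Rightarrow(\mathrm{iii})$ I would re-examine the proof of Lemma~\ref{singlem}. Its concluding inequality shows that equality in $(\mathrm{i})$ forces both $\mathrm{length}\,\mathcal{T}=0$ and $(Z+E)C=0$. The vanishing $\mathrm{length}\,\mathcal{T}=0$ means that $\mathrm{Sym}^{d-3}f_*\mathcal{L}\hookrightarrow f_*\omega_f(-\mathfrak{c})$ is an isomorphism, so no elementary transformation occurs: $\widetilde{P}=P=P'$, $E_\tau=0$, and hence $W:=\widetilde{W}=W'=\mathbb{P}_B(f_*\mathcal{L})$ is a genuine $\mathbb{P}^2$-bundle over $B$ while $X:=\widetilde{X}=X'\subset W$ lies in $|d\mathcal{O}_W(1)+\phi^*\mathfrak{k}|$, the class being the one computed in the body. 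With $E_\tau=0$ one has $C\equiv M+(\text{a fibre class})$, so $(Z+E)C=0$ becomes $(Z+E)\cdot M=0$. Now $M=\widetilde{\Phi}^*T$ meets every $\widetilde{\Phi}$-contracted curve in $0$ and every vertical curve not contracted by $\widetilde{\Phi}$ positively, because $T=\mathcal{O}_{\mathbb{P}_B(f_*\omega_f)}(1)$ is ample on the fibres of $P\to B$; hence all components of $E$ and of $Z$ are $\widetilde{\Phi}$-exceptional. Consequently $\widetilde{\Phi}$ factors through $\rho$, so, $\rho$ being the minimal resolution of indeterminacy, $\rho$ is an isomorphism and $E=0$; we identify $\widetilde{S}=S$. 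Also $K_f=M+Z$ is nef, so every component $\Gamma$ of $Z$ satisfies $Z\cdot\Gamma=K_f\cdot\Gamma\ge 0$ and therefore $Z^2\ge 0$; since a nonzero effective $\widetilde{\Phi}$-exceptional divisor has negative self-intersection, $Z=0$. Thus $K_f=M=\widetilde{\Phi}^*(T|_X)$ with $T|_X$ relatively ample over $B$, and $\widetilde{\Phi}\colon S\to X$ is a birational morphism. As $f$ is relatively minimal, $S$ carries no vertical $(-1)$-curve, while the exceptional curves of $\widetilde{\Phi}$ are vertical; combined with the adjunction formula $\omega_X=(K_W+X)|_X$ on $\mathbb{P}_B(f_*\mathcal{L})$, which identifies $\widetilde{\Phi}^*(T|_X)$ with $\omega_f$ up to a pullback from $B$, this shows that $\widetilde{\Phi}$ realises $S$ as the minimal resolution of $X$ crepantly, whence $X$ is normal with at most rational double points; this is $(\mathrm{iii})$.

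For $(\mathrm{iii})\Rightarrow(\mathrm{ii})$ I would compute directly. Let $S$ be the minimal resolution of $X\in|d\mathcal{O}_W(1)+\phi^*\mathfrak{k}|$ in $W=\mathbb{P}(\mathcal{E})\xrightarrow{\phi}B$, with $X$ having only rational double points; then $\widetilde{\Phi}\colon S\to X$ is crepant, so $\omega_f=\widetilde{\Phi}^*(\omega_X\otimes\psi^*\omega_B^{-1})$ and $f_*\omega_f=\psi_*(\omega_X\otimes\psi^*\omega_B^{-1})$ for $\psi\colon X\to B$. Using $K_W=-3\mathcal{O}_W(1)+\phi^*(c_1(\mathcal{E})+K_B)$, the adjunction $\omega_X=(K_W+X)|_X$, the Grothendieck relation on $\mathbb{P}(\mathcal{E})$ (in which $\phi^*c_2(\mathcal{E})=\phi^*c_3(\mathcal{E})=0$ because $B$ is a curve), and the resulting identification $f_*\omega_f\simeq\mathrm{Sym}^{d-3}\mathcal{E}\otimes\mathcal{O}_B(c_1(\mathcal{E})+\mathfrak{k})$, a routine calculation gives
$$
K_f^2=(d-1)(d-3)(de+3k),\qquad \chi_f=\frac{(d-1)(d-2)}{6}\,(de+3k),
$$
where $e=\deg\mathcal{E}$ and $k=\deg\mathfrak{k}$; hence $K_f^2=\frac{6(d-3)}{d-2}\chi_f$, i.e.\ $(\mathrm{ii})$ (both sides being $0$ in the degenerate case $de+3k=0$).

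The step I expect to be the main obstacle is $(\mathrm{i})\Rightarrow(\mathrm{iii})$: one must read off the precise equality conditions from the proof of Lemma~\ref{singlem}, keep careful track of the divisors on $B$ involved (the twist $\mathfrak{c}$, the auxiliary ample $\mathfrak{a}$, the class $\mathfrak{k}$, the length $l$, the exceptional $E_\tau$) in order to see that the elementary transformation disappears, and then run the nefness and negative-definiteness arguments that kill $Z$ and $E$ and force the resulting birational morphism $S\to X$ to be crepant onto a normal surface with rational double points. The two remaining implications reduce, respectively, to the chain of inequalities already in hand and to a Chern-class computation on $\mathbb{P}(\mathcal{E})$.
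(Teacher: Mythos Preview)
Your cycle $(\mathrm{ii})\Rightarrow(\mathrm{i})\Rightarrow(\mathrm{iii})\Rightarrow(\mathrm{ii})$ and your treatment of the two easy implications match the paper. For $(\mathrm{i})\Rightarrow(\mathrm{iii})$ you correctly extract $\mathrm{length}\,\mathcal{T}=0$ and $(Z+E)C=0$ from equality in Lemma~\ref{singlem}, and your deductions $E_\tau=0$, $(Z+E)M=0$, $E=0$, $Z=0$ are sound (your factorization argument for $E=0$ is in fact cleaner than what the paper writes).

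The gap is in the last step, where you conclude that $X$ has only rational double points. You assert that the relation $K_f=\widetilde{\Phi}^*(T|_X)$ together with adjunction makes $\widetilde{\Phi}\colon S\to X$ crepant, ``whence $X$ is normal with at most rational double points''. This is circular: crepancy (zero discrepancies) presupposes that $X$ is normal, and normality does not follow from the data you have assembled. Equality in Lemma~\ref{singlem} actually gives a \emph{third} consequence you did not use, namely $p_a(C')=g(C)$ for general $C$, and this is what forces $X'=X$ to have only isolated singularities (hence to be normal, since a hypersurface in $W$ is $S_2$). Once normality is in hand, one must still show the singularities are rational; your crepancy route could be made to work here, but it requires controlling the twist $\mathfrak{c}$, which you never address: the identity you need is $\omega_S\simeq\widetilde{\Phi}^*\omega_X\otimes f^*\mathcal{O}_B(\mathfrak{c}-\det f_*\mathcal{L}-\mathfrak{k})$, and one has to argue that this pullback term vanishes before concluding the discrepancy divisor is zero. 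The paper takes a different route at this point: it computes $\chi(\mathcal{O}_X)$ directly on $W$, finds $\chi(\mathcal{O}_X)=\chi(\mathcal{O}_S)-c$, and combines this with $\chi(\mathcal{O}_X)\ge\chi(\mathcal{O}_{\widetilde{S}})=\chi(\mathcal{O}_S)$ to get simultaneously $c=0$ and $R^1\widetilde{\Phi}_*\mathcal{O}_S=0$, i.e.\ rational (hence Du Val) singularities.
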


\begin{proof}
We first show (iii) from (i).
From the proof of Lemma~\ref{singlem}, (i) implies that $p_a(C')=g(C)$ for general $C\in |M'+\widetilde{f}^{*}\mathfrak{a}|$, $T'^{g-1}E_{\tau}=0$ and $(Z+E)M'=0$.
The former implies that $X'$ has at most isolated singularities.
$T'^{g-1}E_{\tau}=0$ implies that $P=P'$ and $E_{\tau}=0$. 
Hence we have $M-M'=\widetilde{f}^{*}\mathfrak{c}$ and then $(Z+E)M=0$.
It follows from the nefness of $M$ that $ZM=0$ and $EM=0$.
Therefore, we have $Z^2=MZ+Z^2=\rho^{*}K_fZ\ge 0$.
Thus, by the Hodge index theorem, we get $Z=0$.
On the other hand, we have $\mathrm{deg}f_*\mathcal{L}+\beta''=0$ from the proof of Lemma~\ref{singlem} and the assumption (i).
Thus $X=X'$ in $W= W'$ is linearly equivalent to
$d\mathcal{O}_W(1)-\phi^{*}\mathfrak{d}$ for some divisor $\mathfrak{d}$ of degree $\mathrm{deg}f_*\mathcal{L}$.
It follows that

\begin{align*}
\chi(\mathcal{O}_X)&=\chi(\mathcal{O}_W)-\chi(\mathcal{O}_W(-X)) \\
&=1-b+\chi(\mathcal{O}_W(K_W+X)) \\
&=1-b+\chi(\mathrm{Sym}^{d-3}f_*\mathcal{L}\otimes (\mathrm{det}f_*\mathcal{L}\otimes \omega_B\otimes \mathcal{O}_B(-\mathfrak{d}))) \\
&=1-b+g(1-b)+\chi_f-c+g(2b-2) \\
&=\chi(\mathcal{O}_S)-c \\
&\le \chi(\mathcal{O}_S).
\end{align*}
On the other hand, since $\Phi\colon \widetilde{S}\to X$ is a resolution of singularities of $X$, we have $\chi(\mathcal{O}_X)\ge \chi(\mathcal{O}_{\widetilde{S}})=\chi(\mathcal{O}_S)$.
Hence $c=0$ and $X$ has at most rational singularities.
Since $X$ is a hypersurface of $W$, any singularity of $X$ is a rational double point.
We can see that $\widetilde{S}=S$ and $\mathfrak{d}=\mathrm{det}f_*\mathcal{L}$.


Next we show that (iii) implies (ii).
By a simple computation, we have
$$
K_f^2=d(d-1)(d-3)\mathrm{deg}\mathcal{E}+3(d-1)(d-3)k,
$$
where $k:=\mathrm{deg}\mathfrak{k}$.
Moreover, by the similar computation as above, we have

\begin{align*}
\chi(\mathcal{O}_X)&=\chi(\mathcal{O}_W)-\chi(\mathcal{O}_W(-X)) \\
&=(g-1)(b-1)+\frac{d(d-1)(d-2)}{6}\mathrm{deg}\mathcal{E}+\frac{(d-1)(d-2)}{2}k.
\end{align*}
Since $X$ has at most rational double points, we get 

\begin{align*}
\chi_f&=\chi(\mathcal{O}_S)-(g-1)(b-1)\\
&=\chi(\mathcal{O}_X)-(g-1)(b-1)\\
&=\frac{d(d-1)(d-2)}{6}\mathrm{deg}\mathcal{E}+\frac{(d-1)(d-2)}{2}k.
\end{align*}
Hence (ii) holds.


It is clear that (i) follows from (ii).
\end{proof}

We remark that any member $X\in |d\mathcal{O}_W(1)+\phi^{*}\mathfrak{k}|$
on the $\mathbb{P}^2$-bundle $\phi\colon W=\mathbb{P}_B(\mathcal{E})\to B$ satisfies $K_{\phi|_X}^2=\lambda_d\chi_{\phi|_X}$, if we put
$K_{\phi|_X}=K_X-(\phi|_X)^{*}K_B$ and $\chi_{\phi|_X}=\chi(\mathcal{O}_X)-(g-1)(b-1)$.
In particular, one sees immediately that the slope inequality in Theorem~\ref{lowerboundthm} is sharp, 
because any general $X$ as above is smooth and irreducible provided that $\mathfrak{k}$ is sufficiently ample.

\section{Algebraization of fibers}
We consider a proper surjective holomorphic map $f\colon S\to \Delta$ from a non-singular complex surface $S$ to a small disk $\Delta\subset \mathbb{C}$ centered at the origin $0$ such that the general fiber $f^{-1}(t)$ over $t\neq 0$ is a non-singular curve of genus $g$ and put $F_0=f^{-1}(0)$.
The pair $(f,F_0)$ is called a {\em fiber germ of genus $g$},
which we sometimes denote it simply by $F_0$ if there is no fear of confusion.
A fiber germ $(f,F_0)$ is {\em relatively minimal} if $F_0$ contains no $(-1)$-curves.
In the sequel, we always assume that any fiber germ is relatively minimal.
Two relatively minimal fiber germs $(f\colon S\to \Delta,F_0)$ and $(f'\colon S'\to \Delta,F'_0)$ are {\em holomorphically equivalent} if there exist biholomorphic maps $\phi\colon S\to S'$ and $\psi\colon \Delta\to \Delta$ with $\psi(0)=0$ such that $f'\circ\phi=\psi\circ f$ after shrinking $\Delta$ if necessary.
Let $\mathcal{A}$ be a set of holomorphically equivalence classes of fiber germs of genus $g$
and $\nu\colon \mathcal{A}\to \Sigma$ a map from $\mathcal{A}$ to a set $\Sigma$.
The map $\nu$ is an {\em algebraic invariant} (cf.\ \cite{Te}) if for any fiber germ $(f\colon S\to \Delta,F_0)$ in $\mathcal{A}$, there exists a natural number $n$ such that for any fiber germ $(f'\colon S'\to \Delta,F'_0)$ in $\mathcal{A}$ which satisfies $S_n\simeq S'_n$
over $\mathrm{Spec}\mathbb{C}[t]/(t^n)$, we have $\nu(f,F_0)=\nu(f',F'_0)$, where $S_n:=S\times_{\Delta}\mathrm{Spec}\mathbb{C}[t]/(t^n)$.
For example, the map $\mu\colon \mathcal{A}\to \widehat{\Gamma}_g$ which sends a fiber germ $(f,F_0)$ to its topological monodromy $\mu_f$ is an algebraic invariant, where $\Gamma_g$ is the mapping class group of genus $g$ and $\widehat{\Gamma}_g$ is the set of its conjugacy classes.

Let $\mathcal{A}_d$ denote the set of holomorphically equivalence classes of fiber germs whose general fiber is a smooth plane curve of degree $d$.
The following is our main theorem:

\begin{thm}\label{mainthm}
There exists a non-negative algebraic invariant $\mathrm{Ind}_d\colon \mathcal{A}_d\to \frac{1}{d-2}\mathbb{Z}_{\ge 0}$ such that
for any relatively minimal plane curve fibration $f\colon S\to B$ of degree $d$, the value $\mathrm{Ind}_d(F)$ equals to $0$ for any general fiber $F$ of $f$ and 
$$
K_f^2=\frac{6(d-3)}{d-2}\chi_f+\sum_{p\in B}\mathrm{Ind}_d(F_p)
$$
holds.
\end{thm}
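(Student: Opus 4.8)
The plan is to read off $\mathrm{Ind}_d$ from the localization carried out in \S2, promote it to an invariant of fiber germs, and then deduce its non-negativity by transporting the slope inequality of \S3 along an algebraization of germs. For a fiber germ $(f\colon S\to\Delta, F_0)$ in $\mathcal{A}_d$, uniqueness of the $\mathfrak{g}^2_d$ on the general fiber and Theorem~\ref{glueingthm} give a divisorial sheaf $\mathcal{L}$ on $S$ restricting to the $\mathfrak{g}^2_d$, unique up to a twist by a divisor supported on $F_0$; after such a twist $\mathcal{L}^{\otimes(d-3)}\otimes\mathcal{O}_S(J_0)=\omega_{S/\Delta}$ with $J_0$ effective and vertical. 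Writing $\mathcal{T}_k$ for the torsion cokernel of $\mathrm{Sym}^k f_*\mathcal{L}\to f_*\mathcal{L}^{\otimes k}$ and $\mathrm{length}$ for length at $0$, I would set
\[
\mathrm{Ind}_d(F_0):=J_0^2+2(d-3)\Bigl(\tfrac{d+1}{d-2}\mathrm{length}(R^1f_*\mathcal{L}^{\otimes(d-2)})-\mathrm{length}(R^1f_*\mathcal{L}^{\otimes(d-1)})+\mathrm{length}(\mathcal{T}_{d-1})-\tfrac{d+1}{d-2}\mathrm{length}(\mathcal{T}_{d-2})\Bigr).
\]
Exactly as in \S2 this is independent of the choice of $\mathcal{L}$; it is visibly invariant under holomorphic equivalence of fiber germs and vanishes on a smooth germ, so $\mathrm{Ind}_d\colon\mathcal{A}_d\to\frac1{d-2}\mathbb{Z}$ is well defined, the right-hand side lying in $\frac1{d-2}\mathbb{Z}$ by inspection of the coefficients. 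Since the \S2 construction for a global $f\colon S\to B$ restricts compatibly to a small disk about any $p\in B$, the function $F_p\mapsto\mathrm{Ind}_d(F_p)$ appearing in \eqref{slopeeq} is precisely this local invariant, so the slope equality asserted in the theorem is nothing but \eqref{slopeeq}; what remains is that $\mathrm{Ind}_d$ is an algebraic invariant and that it is non-negative.

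For algebraic invariance, I would note that every ingredient of $\mathrm{Ind}_d(F_0)$ is extracted from a neighbourhood of $F_0$ of bounded infinitesimal order: the birational map $S\dasharrow\mathbb{P}^2\times\Delta$ attached to $f_*\mathcal{L}$, hence $\mathcal{L}$ and $J_0$, is recovered from finitely many jets along $F_0$, while the finite-length stalks of $\mathcal{T}_k$ and $R^1f_*\mathcal{L}^{\otimes k}$ depend only on $\mathcal{O}_S$ modulo a sufficiently high power of its ideal sheaf. Hence there is $n=n(F_0)$ such that $\mathrm{Ind}_d(F_0)$ is determined by $S_n=S\times_\Delta\mathrm{Spec}\,\mathbb{C}[t]/(t^n)$, i.e.\ $\mathrm{Ind}_d$ is an algebraic invariant in the sense of \S4.

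For non-negativity I would follow the route announced in the introduction. First, a local computation in a neighbourhood of an irreducible one-nodal curve should give $J_0=0$ and $\mathcal{T}_{d-2}=\mathcal{T}_{d-1}=R^1f_*\mathcal{L}^{\otimes(d-2)}=R^1f_*\mathcal{L}^{\otimes(d-1)}=0$, hence $\mathrm{Ind}_d(F'_0)=0$ for every irreducible Lefschetz fiber germ $F'_0\in\mathcal{A}_d$. Then, given an arbitrary $F_0\in\mathcal{A}_d$ with $n=n(F_0)$ as above, I would present a model of $S$ as a hypersurface in $\mathbb{P}^2\times\Delta$ cut out (in charts) by an equation $h(x,y,z,t)$, truncate $h$ modulo $t^n$, and add a generic degree-$d$ family of higher order in $t$ to obtain a degree-$d$ plane-curve family $\overline{X}\subset\mathbb{P}^2\times\mathbb{P}^1$ whose germ at $t=0$ agrees with that of $S$ modulo $t^n$ and whose fibers over $t\neq0$ are, for a generic choice, smooth or irreducible one-nodal with $\overline{X}$ smooth there. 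The minimal resolution $\overline{f}\colon\overline{S}\to\mathbb{P}^1$ is then a relatively minimal plane curve fibration of degree $d$ with $\overline{S}_n\simeq S_n$, so $\mathrm{Ind}_d(\overline{F}_0)=\mathrm{Ind}_d(F_0)$ by the previous step; applying \eqref{slopeeq} to $\overline{f}$ and using $\mathrm{Ind}_d\equiv0$ on all other fibers gives $\mathrm{Ind}_d(F_0)=K_{\overline f}^2-\tfrac{6(d-3)}{d-2}\chi_{\overline f}$, which is $\ge0$ by Theorem~\ref{lowerboundthm}. Combined with the rationality observation this places the values of $\mathrm{Ind}_d$ in $\frac1{d-2}\mathbb{Z}_{\ge0}$ and completes the proof.

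The main obstacle is the algebraization in the last step: one must build a global degree-$d$ plane curve fibration over $\mathbb{P}^1$ whose central fiber germ matches $F_0$ up to order $n$ while every other singular fiber is irreducible Lefschetz, and — the genuinely delicate point — control the singularities of $\overline{X}$ along the central fiber so that the modifications needed to reach a relatively minimal smooth model near $t=0$ are themselves determined by the $n$-jet, thereby securing $\overline{S}_n\simeq S_n$. Verifying the genericity statement (only Lefschetz fibers for $t\neq0$) and carrying out the local vanishing $\mathrm{Ind}_d\equiv0$ on Lefschetz germs are the remaining technical steps.
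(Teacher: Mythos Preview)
Your overall architecture matches the paper's: define $\mathrm{Ind}_d$ on germs by the \S2 formula, verify it vanishes on irreducible Lefschetz germs, algebraize an arbitrary germ to a global fibration over $\mathbb{P}^1$ with only Lefschetz singular fibers elsewhere, and read off non-negativity from Theorem~\ref{lowerboundthm}. The Lefschetz computation and the algebraization you sketch are exactly Lemmas~\ref{Ind0lem} and~\ref{alglem}, and the paper cites Kuno~\cite{Ku} for the genericity statement you flag.

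There is, however, a genuine gap where you write ``exactly as in \S2 this is independent of the choice of $\mathcal{L}$.'' The \S2 argument for independence is \emph{not} local: it observes that twisting $\mathcal{L}$ by a divisor supported over $p$ leaves $\mathrm{Ind}_d(F_q)$ unchanged for every $q\neq p$, and then invokes the \emph{global} identity \eqref{slopeeq} to conclude for $q=p$. Over a disk $\Delta$ there is no such global identity available, so at this stage $\mathrm{Ind}_d(F_0)$ is not yet shown to be well-defined, and your subsequent claim that ``$\mathcal{L}$ and $J_0$ are recovered from finitely many jets'' does not help either, since $\mathcal{L}$ is only determined up to a vertical twist and it is precisely the effect of that twist on the formula that is in question. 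The paper handles this in Lemma~\ref{alginvlem}: given a choice of $\mathcal{L}$ on the germ, it uses the algebraization $\overline{f}\colon\overline{S}\to\mathbb{P}^1$ of Lemma~\ref{alglem} together with a Kuranishi-space construction to produce a line bundle $\overline{\mathcal{L}}$ on $\overline{S}$ with $\overline{\mathcal{L}}|_{\overline{S}_n}\simeq\mathcal{L}|_{S_n}$, so that $\mathrm{Ind}_d^{\mathcal{L}}(F_0)=\mathrm{Ind}_d^{\overline{\mathcal{L}}}(\overline{F}_0)$; the right-hand side is independent of $\overline{\mathcal{L}}$ by the global \S2 argument, and well-definedness and algebraic invariance then follow together. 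Your proposal would go through once you insert this line-bundle transport step; without it the independence claim is unproved.
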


Now, we define the function $\mathrm{Ind}_d$.
Let $(f\colon S\to \Delta,F_0)$ be a fiber germ in $\mathcal{A}_d$.
Then, by Theorem~\ref{glueingthm}, there exists a line bundle $\mathcal{L}$ on $S$ such that the restriction $\mathcal{L}|_{F_t}$ is a $\mathfrak{g}^2_d$ on $F_t=f^{-1}(t)$ for any $t\neq 0$ and it is unique up to a multiple of a divisor consisting of components of $F_0=f^{-1}(0)$.
It follows that $\mathcal{L}^{\otimes d-3}(J)\simeq \omega_f$ for some divisor $J$ consisting of components of $F_0$.
Using the line bundle $\mathcal{L}$, we define $\mathrm{Ind}_d(F_0)$ by

\begin{align*}
\mathrm{Ind}_{d}(F_0):=&J^{2}+2(d-3)\left(\frac{d+1}{d-2}\mathrm{length}(R^{1}f_{*}\mathcal{L}^{\otimes d-2})-\mathrm{length}(R^{1}f_{*}\mathcal{L}^{\otimes d-1})\right)\\
&+2(d-3)\left(\mathrm{length}(\mathcal{T}_{d-1})-\frac{d+1}{d-2}\mathrm{length}(\mathcal{T}_{d-2})\right),
\end{align*}
where $\mathcal{T}_k$ is the torsion sheaf defined by the natural exact sequence
$$
0\to \mathrm{Sym}^{k}f_{*}\mathcal{L}\to f_{*}\mathcal{L}^{\otimes k}\to \mathcal{T}_k\to 0.
$$
We have seen that the value $\mathrm{Ind}_d(F_0)$ is independent of a choice of the line bundle $\mathcal{L}$ when the fiber germ $(f,F_0)$ is realized in a global fibration $S\to B$.
From \eqref{slopeeq}, in order to prove Theorem~\ref{mainthm}, we must show that for any fiber germ $(f,F_0)$ in $\mathcal{A}_d$, $\mathrm{Ind}_d(F_0)$ is well-defined, that is, not depend on a choice of $\mathcal{L}$ and non-negative algebraic invariant.
The following is a key lemma.

\begin{lem} \label{alglem}
For any fiber germ $(f\colon S\to \Delta,F_0)$ in $\mathcal{A}_d$ and any natural number $n$, there exists a plane curve fibration $\overline{f}\colon \overline{S}\to \mathbb{P}^{1}$ of degree $d$ such that $S_n$ is isomorphic to $\overline{S}_n:=\overline{S}\times_{\mathbb{P}^{1}}\mathrm{Spec}\mathcal{O}_{\mathbb{P}^{1},0}/\mathfrak{m}^{n}$ over $\mathrm{Spec}\mathbb{C}[t]/(t^n)\simeq \mathrm{Spec}\mathcal{O}_{\mathbb{P}^{1},0}/\mathfrak{m}^{n}$ and all the other singular fibers of $\overline{f}$ are irreducible Lefschetz plane curves of degree $d$, where $\mathfrak{m}$ denotes the maximal ideal of $\mathcal{O}_{\mathbb{P}^{1},0}$.
\end{lem}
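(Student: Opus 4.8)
The plan is to realize the fiber germ as a modification of a one-parameter family of plane curves, to replace that family by polynomial data agreeing with it to high order at the origin, and then to globalize over $\mathbb{P}^1$ generically enough that every new singular fiber becomes an irreducible one-nodal plane curve. First I would attach to $(f,F_0)$ such a family. After shrinking $\Delta$, choose the line bundle $\mathcal{L}$ on $S$ with $\mathcal{L}|_{F_t}$ the (unique) $\mathfrak{g}^{2}_{d}$ on $F_t=f^{-1}(t)$ as in \S2, normalized so that $f^{*}f_{*}\mathcal{L}\to\mathcal{L}$ has no base divisor contained in $F_0$. Since $f_{*}\mathcal{L}$ is a free $\mathcal{O}_{\Delta}$-module, this yields a rational map $\Psi\colon S\dasharrow\mathbb{P}(f_{*}\mathcal{L})=\mathbb{P}^2\times\Delta$ over $\Delta$; let $X\subset\mathbb{P}^2\times\Delta$ be the closure of its image. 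For $t\neq 0$ the linear series $\mathcal{L}|_{F_t}$ embeds $F_t$ in $\mathbb{P}^2$ as a smooth plane curve of degree $d$, so $\Psi$ is birational onto $X$, the projection $X\to\Delta$ is flat with one-dimensional fibers, and $X$ is cut out in $\mathbb{P}^2\times\Delta$ by a single equation $h=\sum_{i\ge 0}t^{i}h_{i}$ with $h_{i}\in H^{0}(\mathbb{P}^2,\mathcal{O}(d))$ and $h_{0}\neq 0$. Since the general fiber has genus $(d-1)(d-2)/2>1$, a fibered surface over $\Delta$ has a unique relatively minimal model in its birational class, so $(f,F_0)$ is recovered from $X$ as the germ of the relatively minimal model of any resolution of $X$.

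Next I would use a finite-determinacy statement. Resolving the surface $X$ along $X_0$ is a finite sequence of blow-ups of points lying over $X_0$, followed by the contraction of the finitely many $(-1)$-curves disjoint from the general fiber, and each of these operations reads off only a bounded jet of $X$ at $0$. Hence, given $n$, there is an integer $m\ge n$ such that for any family of plane curves $X'\subset\mathbb{P}^2\times\Delta$ with $X'\times_{\Delta}\mathrm{Spec}\,\mathbb{C}[t]/(t^{m})\simeq X\times_{\Delta}\mathrm{Spec}\,\mathbb{C}[t]/(t^{m})$, the associated relatively minimal fiber germs are isomorphic over $\mathrm{Spec}\,\mathbb{C}[t]/(t^{n})$. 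I would establish this either by running a resolution algorithm step by step and bounding the jet of $X$ that it uses, or by quoting the analogous statement from the theory of algebraic invariants.

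Then, fixing $m$ as above and a large integer $e$, I would consider the rational maps $\phi\colon\mathbb{P}^1\to\mathbb{P}^{N}=|\mathcal{O}_{\mathbb{P}^2}(d)|$ of the form $t\mapsto\bigl[\sum_{i=0}^{m}t^{i}h_{i}+\sum_{i=m+1}^{e}t^{i}g_{i}\bigr]$ with $g_{m+1},\dots,g_{e}\in H^{0}(\mathbb{P}^2,\mathcal{O}(d))$. Every such $\phi$ has the same $m$-jet at $0$ as the family $X$, and choosing $[g_e]\in\mathbb{P}^{N}$ to be a smooth plane curve makes $\phi$ base point free with $\phi(\mathbb{P}^1)\not\subset\Delta$, where $\Delta\subset\mathbb{P}^{N}$ is the irreducible discriminant hypersurface. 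Let $\Delta_{\mathrm{bad}}\subset\Delta$ be the proper closed subset parametrizing plane curves that are neither smooth nor irreducible with a single node; it has codimension at least $2$ in $\mathbb{P}^{N}$. For $e\gg m$ and any fixed $t_{0}\neq 0$ the evaluation $\phi\mapsto\phi(t_0)$ and the $1$-jet map $\phi\mapsto j^{1}_{t_0}\phi$ are submersive as $(g_{m+1},\dots,g_e)$ varies, so a dimension count on the corresponding incidence varieties shows that for a general choice of the tail $(g_{m+1},\dots,g_e)$ the curve $\phi(\mathbb{P}^1)$ meets $\Delta$ away from $t=0$ only at smooth points of $\Delta$ lying off $\Delta_{\mathrm{bad}}$, and transversally there. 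Let $\overline{X}\subset\mathbb{P}^2\times\mathbb{P}^1$ be the resulting family and $\overline{f}\colon\overline{S}\to\mathbb{P}^1$ the relatively minimal model of a resolution of $\overline{X}$. Then $\overline{f}$ is a plane curve fibration of degree $d$; over each $t_{0}\neq 0$ with $\phi(t_0)\in\Delta$ the transversality makes $\overline{X}$ smooth at the node of its fiber there, so that fiber is an irreducible one-nodal, hence irreducible Lefschetz, plane curve and is left untouched by the modification; and since $\overline{X}$ agrees with $X$ to order $m\ge n$ at $0$, the finite-determinacy step gives $\overline{S}_{n}=\overline{S}\times_{\mathbb{P}^1}\mathrm{Spec}\,\mathcal{O}_{\mathbb{P}^1,0}/\mathfrak{m}^{n}\simeq S_{n}$, as required.

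The step I expect to be the main obstacle is the Bertini-type argument just sketched: the $m$-jet of $\phi$ at $0$ must be held fixed --- and this could a priori force $\phi(\mathbb{P}^1)$ to be tangent to $\Delta$, or to meet $\Delta_{\mathrm{bad}}$, at some point other than $0$ --- while enough freedom must remain in the tail $(g_{m+1},\dots,g_e)$ that the intersections of $\phi(\mathbb{P}^1)$ with $\Delta$ away from $0$ behave exactly as for a general pencil of plane curves; making the submersivity and transversality assertions precise for these constrained families of rational curves is the technical core of the argument. A secondary point requiring care is the finite-determinacy statement itself, namely that the passage from the plane curve family $X$ to its relatively minimal model uses only a bounded jet of $X$ at the origin.
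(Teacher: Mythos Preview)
Your approach is essentially the same as the paper's: realize the germ as (the minimal model of a resolution of) a family $X\subset\Delta\times\mathbb{P}^2$ cut out by a holomorphic equation $\varphi(t;X,Y,Z)$, truncate the Taylor series in $t$, append a general polynomial tail, homogenize to globalize over $\mathbb{P}^1$, and argue that the remaining singular fibers are irreducible nodal. The paper carries this out in exactly that order.

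Two points of execution differ. First, for the Lefschetz claim away from $0$ the paper does not run a Bertini/discriminant argument at all; it simply invokes Kuno's result \cite{Ku}, which already shows that a generic one-parameter family of plane curves has only irreducible Lefschetz singular fibers. So the transversality/submersivity analysis you flag as the ``main obstacle'' is replaced in the paper by a citation, and you can do the same. Second, the paper truncates $\varphi$ at order exactly $n$ (not at some auxiliary $m\ge n$) and asserts $S_n\simeq\overline{S}_n$ without further comment; your worry about finite determinacy of the resolution/minimal-model process is legitimate, and the paper does not spell this out. Your plan to bound the jet used by an explicit resolution sequence is the right way to make this honest, and is a genuine addition over what the paper writes.
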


\begin{proof}
We can take a line bundle $\mathcal{L}$ on $S$ such that $\mathcal{L}|_{F_t}$ is the $\mathfrak{g}^2_d$ on $F_t$ for any $t\neq 0$ from Theorem~\ref{glueingthm}.
Thus, we can take a rational map $S\dasharrow \Delta\times \mathbb{P}^2$ over $\Delta$ that embeds $F_t$ to $\mathbb{P}^2=\{t\}\times \mathbb{P}^2$ for any $t\neq 0$.
Let $\varphi(t;X,Y,Z)$ be a defining equation of $F_t\subset \mathbb{P}^2_{(X:Y:Z)}$ for $t\neq 0$, which is a homogeneous polynomial of degree $d$ with respect to $X,Y,Z$ and determined uniquely up to a multiple of a constant.
We may assume that $\varphi(t;X,Y,Z)$ is holomorphic in $t\neq 0$ after shrinking $\Delta$ if necessary.
By Riemann's extension theorem, $\varphi(t;X,Y,Z)$ is holomorphic at $t=0$.
Thus the image of a rational map $S\dasharrow \Delta\times \mathbb{P}^2$ can be written as $X:=\{(t,(X:Y:Z))\in \Delta\times \mathbb{P}^2|\varphi(t;X,Y,Z)=0\}$.
Let
$$
\varphi(t;X,Y,Z)=\varphi(0;X,Y,Z)+t\frac{d\varphi}{d t}(0;X,Y,Z)+\cdots+\frac{t^m}{m!}\frac{d^m\varphi}{d t^m}(0;X,Y,Z)+\cdots
$$
be the Taylor expansion near $0\in \Delta$ and define
$$
\varphi^{[n]}(t;X,Y,Z):=\varphi(0;X,Y,Z)+t\frac{d\varphi}{d t}(0;X,Y,Z)+\cdots+\frac{t^n}{n!}\frac{d^n\varphi}{d t^n}(0;X,Y,Z).
$$
Take a sufficiently large $m\gg n$ and general homogeneous polynomials $\psi_{n+1}(X,Y,Z)$, \ldots, $\psi_{m}(X,Y,Z)$ of degree $d$.
Let $\Phi(t_0,t_1;X,Y,Z)$ be the homogenization of the polynomial
$$
\varphi^{[n]}(t;X,Y,Z)+t^{n+1}\psi_{n+1}(t;X,Y,Z)+\cdots+t^{m}\psi_{m}(t;X,Y,Z)
$$
with respect to $t\in \mathbb{C}$
and put $\overline{X}:=\{((t_0:t_1),(X:Y:Z))\in \mathbb{P}^1\times \mathbb{P}^2|\Phi(t_0,t_1;X,Y,Z)=0\}$.
Taking a resolution of singularities of $\overline{X}$ and its relatively minimal model over $\mathbb{P}^1$, we get a plane curve fibration $\overline{f}\colon \overline{S}\to \mathbb{P}^1$ of degree $d$ such that 
$S_n$ is isomorphic to $\overline{S}_n$.
Since $\psi_{n+1}$, \ldots, $\psi_{m}$ are general, any singular fiber of $\overline{f}$ over $\mathbb{P}^1\setminus \{0\}$ is an irreducible Lefschetz plane curve of degree $d$ by Kuno's result \cite{Ku}.
\end{proof}

\begin{lem}\label{alginvlem}
$\mathrm{Ind}_d\colon \mathcal{A}_d\to \mathbb{Q}$ is a well-defined algebraic invariant.
\end{lem}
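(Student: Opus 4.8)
We must show that $\mathrm{Ind}_d$ is both well-defined (independent of the choice of the line bundle $\mathcal{L}$, for an abstract fiber germ not yet realized in a global fibration) and an algebraic invariant in the sense defined above. The key tool is Lemma~\ref{alglem}: for a fiber germ $(f\colon S\to\Delta,F_0)$ in $\mathcal{A}_d$ and a given $n$, we may produce a global plane curve fibration $\overline{f}\colon\overline{S}\to\mathbb{P}^1$ of degree $d$ whose $n$-th infinitesimal neighbourhood of the central fiber agrees with that of $F_0$ and all of whose other singular fibers are irreducible Lefschetz plane curves.

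\emph{Step 1: the invariants appearing in $\mathrm{Ind}_d(F_0)$ depend only on $S_n$ for $n$ large.} The three ingredients are $J^2$, the lengths of $R^1f_*\mathcal{L}^{\otimes d-2}$ and $R^1f_*\mathcal{L}^{\otimes d-1}$, and the lengths of $\mathcal{T}_{d-2}$ and $\mathcal{T}_{d-1}$. Each of these is a length of a sheaf supported at $0\in\Delta$, hence is finite, hence is unaffected by passing to $S_n$ for $n$ sufficiently large (larger than the length in question plus some fixed bound coming from the multiplicities in $J$); likewise $J^2$ depends only on the component structure of $F_0$ and the order of vanishing of $\mathcal{L}^{\otimes d-3}/\omega_f$, which are read off from a finite-order neighbourhood. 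So: given $(f,F_0)$, choose $n$ exceeding all these lengths. For any other germ $(f',F_0')$ with $S'_n\simeq S_n$ over $\mathrm{Spec}\,\mathbb{C}[t]/(t^n)$, one constructs a line bundle $\mathcal{L}'$ on $S'$ restricting to the $\mathfrak{g}^2_d$ on general fibers; its restriction to $S'_n\simeq S_n$ can be taken to agree with that of $\mathcal{L}$ on $S_n$, so all five length-type quantities and $J'^2$ coincide with those for $(f,F_0)$. This gives $\mathrm{Ind}_d(F_0')=\mathrm{Ind}_d(F_0)$, i.e.\ the algebraic-invariant property — \emph{provided} $\mathrm{Ind}_d$ is first known to be well-defined (independent of $\mathcal{L}$) for each germ.

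\emph{Step 2: well-definedness via algebraization.} For a germ $(f,F_0)$ realized inside a global fibration we already proved in \S2, using the slope equality \eqref{slopeeq}, that $\mathrm{Ind}_d(F_0)$ does not depend on $\mathcal{L}$. For an abstract germ we reduce to this case: fix $n$ as in Step~1 and invoke Lemma~\ref{alglem} to get $\overline{f}\colon\overline{S}\to\mathbb{P}^1$ with $\overline{S}_n\simeq S_n$. Two candidate line bundles $\mathcal{L}_1,\mathcal{L}_2$ on $S$ differing by an $f$-vertical divisor can, after modifying by the appropriate vertical divisor and since $n$ was chosen large, be matched with the restrictions to $\overline{S}_n$ of two line bundles $\overline{\mathcal{L}}_1,\overline{\mathcal{L}}_2$ on $\overline{S}$ that likewise differ by a $\overline{f}$-vertical divisor supported over $0$. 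By Step~1 the value of $\mathrm{Ind}_d$ computed from $\mathcal{L}_i$ on $S$ equals that computed from $\overline{\mathcal{L}}_i$ on $\overline{S}$ at the central fiber; and on the global $\overline{f}$ the two values agree by \S2. Hence $\mathrm{Ind}_d(F_0)$ is well-defined, and then Step~1 upgrades this to the algebraic-invariant statement. Finally $\mathrm{Ind}_d$ takes values in $\mathbb{Q}$ (indeed in $\tfrac{1}{d-2}\mathbb{Z}$) by inspection of the formula, the denominators all being $d-2$.

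\emph{Main obstacle.} The delicate point is the bookkeeping in Step~1–2: one must check that the isomorphism $S_n\simeq\overline{S}_n$ over $\mathrm{Spec}\,\mathbb{C}[t]/(t^n)$ can be promoted to an identification that also carries a chosen $\mathcal{L}$ (equivalently the relative $\mathfrak{g}^2_d$, which is unique) to $\overline{\mathcal{L}}$ on the $n$-th neighbourhood, and that each of $J^2$, $\mathrm{length}(R^1f_*\mathcal{L}^{\otimes k})$ ($k=d-2,d-1$) and $\mathrm{length}(\mathcal{T}_k)$ ($k=d-2,d-1$) is genuinely determined by $S_n$ together with this restricted line-bundle data for $n$ large — i.e.\ that there is no "tail" contribution from higher-order terms. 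Uniqueness of the $\mathfrak{g}^2_d$ on each smooth fiber (hence rigidity of $\mathcal{L}$ up to vertical twists) is what makes this work, and the finiteness of all the lengths involved is what makes "$n$ large" suffice; once these are in hand the rest is the formal reduction above.
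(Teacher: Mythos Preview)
Your overall strategy matches the paper's: reduce well-definedness for an analytic germ to the already-established global case (\S2) by algebraizing via Lemma~\ref{alglem}, after first observing that the quantities entering $\mathrm{Ind}_d^{\mathcal{L}}(F_0)$ are determined by the restriction $\mathcal{L}_n:=\mathcal{L}|_{S_n}$ for $n$ large. That observation (your Step~1) is exactly what the paper uses, and the logical structure of your Step~2 is also the paper's.

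The gap is the point you yourself flag as the ``main obstacle'': you assert that a chosen $\mathcal{L}$ on $S$ can be ``matched'' with some $\overline{\mathcal{L}}$ on $\overline{S}$ so that $\mathcal{L}|_{S_n}\simeq\overline{\mathcal{L}}|_{\overline{S}_n}$, but you do not construct such an $\overline{\mathcal{L}}$. Invoking uniqueness of the $\mathfrak{g}^2_d$ on smooth fibers is not enough here, because $S_n$ is a non-reduced scheme supported set-theoretically on the singular fiber $F_0$; there is no ``smooth fiber'' inside $S_n$ on which to compare, and line bundles on $S_n$ are not a priori determined by their behavior away from $F_0$. So the matching step is genuinely nontrivial, not just bookkeeping.

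The paper resolves this by producing a \emph{common parent family} containing both germs. Since $(f,F_0)$ and $(\overline{f},\overline{F}_0)$ have the same topological monodromy and $F_0\simeq\overline{F}_0$, one takes a subvariety $\mathcal{U}$ of the Kuranishi space of the stable model $F'_0$ parametrizing plane-curve limits, with its universal family $\mathcal{C}\to\mathcal{U}$; the cyclic group $G=\mathbb{Z}_N$ (with $N$ the pseudo-period of the monodromy) acts equivariantly, and the quotient $\mathcal{C}/G\to\mathcal{U}/G$ contains both $(f,F_0)$ and $(\overline{f},\overline{F}_0)$. Applying Theorem~\ref{glueingthm} to this quotient family yields a single divisorial sheaf $\mathbb{L}$ restricting to the $\mathfrak{g}^2_d$ generically. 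Writing $\mathcal{L}\simeq\mathbb{L}|_S\otimes\mathcal{O}_S(D)$ for some vertical $D$ supported on $F_0$, one sets $\overline{\mathcal{L}}:=\mathbb{L}|_{\overline{S}}\otimes\mathcal{O}_{\overline{S}}(D)$; the required agreement on $n$-th neighborhoods is then automatic. This construction is the substantive content you are missing; once it is in place, the rest of your argument goes through exactly as you wrote it.
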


\begin{proof}
Fix a fiber germ $(f,F_0)$ of $\mathcal{A}_d$ arbitrarily and denote by $\mathrm{Ind}_d^{\mathcal{L}}(F_0)$ the value $\mathrm{Ind}_d(F_0)$ defined by using a line bundle $\mathcal{L}$ as above.
Note that the value $\mathrm{Ind}_d^{\mathcal{L}}(F_0)$ is completely determined by the restriction $\mathcal{L}_n:=\mathcal{L}|_{S_n}$ for a sufficiently large $n$ (depending on $(f,F_0)$).
From Lemma~\ref{alglem}, we can take a plane curve fibration $\overline{f}\colon \overline{S}\to \mathbb{P}^1$ of degree $d$ such that $S_n$ is isomorphic to $\overline{S}_n$. 
We will show that the line bundle $\mathcal{L}_n$ is the restriction of some line bundle $\overline{\mathcal{L}}$ on $\overline{S}$ to $\overline{S}_n$ via the isomorphism $S_n\simeq \overline{S}_n$.
Note that the topological monodromies of $(f,F_0)$ and $(\overline{f},\overline{F}_0)$ are the same and $F_0\simeq \overline{F}_0$.
Take a subvariety $\mathcal{U}$ of the Kuranishi space of the stable model $F'_0$ of $(f,F_0)$ parametrizing smooth plane curves of degree $d$ or its limit and consider the universal family $\mathcal{C}\to \mathcal{U}$.
Then the cyclic group $G=\mathbb{Z}_N$ acts on $\mathcal{C}$ and $\mathcal{U}$ equivariantly and the quotient fibration $\mathcal{C}/G\to \mathcal{U}/G$ contains the two fiber germs $(f,F_0)$ and $(\overline{f},\overline{F}_0)$, where the number $N$ is the minimal pseudo-period of the topological monodromy of $f$.
We may assume that $\mathcal{C}/G$ and $\mathcal{U}/G$ are normal by taking normalizations.
Applying Theorem~\ref{glueingthm} to $\mathcal{C}/G\to \mathcal{U}/G$, we obtain a divisorial sheaf $\mathbb{L}$ on $\mathcal{C}/G$ such that the restriction of $\mathbb{L}$ to any general fiber is a $\mathfrak{g}^2_d$.
We can write $\mathcal{L}\simeq \mathbb{L}|_{S}\otimes \mathcal{O}_S(D)$ for some divisor $D$ consisting of components of $F_0$ and then $\mathcal{L}_n\simeq \mathbb{L}|_{\overline{S}}\otimes \mathcal{O}_{\overline{S}}(D)|_{\overline{S}_n}$,
where $\mathbb{L}|_{\overline{S}}$ is a line bundle on $\overline{S}$ obtained by glueing the restriction of $\mathbb{L}$ to a neighborhood of the fiber $\overline{F}_0$ with a line bundle on $\overline{S}\setminus \overline{F}_0$ obtained by Theorem~\ref{glueingthm}.
The line bundle $\overline{\mathcal{L}}:=\mathbb{L}|_{\overline{S}}\otimes \mathcal{O}_{\overline{S}}(D)$ is the desired one.
Since $\mathrm{Ind}_d^{\mathcal{L}}(F_0)$ and $\mathrm{Ind}_d^{\overline{\mathcal{L}}}(\overline{F}_0)$ are determined by $\mathcal{L}_n$, we have 
 $\mathrm{Ind}^{\mathcal{L}}_d(F_0)=\mathrm{Ind}^{\overline{\mathcal{L}}}_d(\overline{F}_0)$.
Since $\mathrm{Ind}^{\overline{\mathcal{L}}}_d(\overline{F}_0)$
is independent of the choice of the line bundle, we see that $\mathrm{Ind}_d$ is well-defined.
In order to prove that $\mathrm{Ind}_d$ is an algebraic invariant, we apply the similar arguments as above to any fiber germ $(f'\colon S'\to \Delta,F'_0)$ in $\mathcal{A}_d$ with $S_n\simeq S'_n$.
Thus we have $\mathrm{Ind}_d(F_0)=\mathrm{Ind}_d(F'_0)$ for a sufficiently large $n$.
Such a number $n$ depends only on $(f,F_0)$ and $\mathbb{L}$.
Thus $\mathrm{Ind}_d$ is an algebraic invariant.
\end{proof}

\begin{defn}
A fiber germ $(f\colon S\to \Delta,F_0)$ in $\mathcal{A}_d$ is called a {\em Lefschetz fiber germ of type $0$} if $S\subset \Delta\times \mathbb{P}^2$ and $F_0=f^{-1}(0)$ is an irreducible Lefschetz plane curve of degree $d$.
\end{defn}

\begin{lem}\label{Ind0lem}
For any Lefschetz fiber germ $(f,F_0)$ of type $0$ in $\mathcal{A}_d$, we have $\mathrm{Ind}_d(F_0)=0$.
\end{lem}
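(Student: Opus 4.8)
The plan is to evaluate $\mathrm{Ind}_d(F_0)$ directly from its defining formula, exploiting the line bundle that comes for free from the embedding $S\subset\Delta\times\mathbb{P}^2$. Write $Y=\Delta\times\mathbb{P}^2$ and $H=\mathrm{pr}_2^{*}\mathcal{O}_{\mathbb{P}^2}(1)$, so that $\mathrm{Pic}(Y)=\mathbb{Z}\cdot H$. Since $(f,F_0)$ is a Lefschetz fiber germ of type $0$, the surface $S$ is a smooth hypersurface of $Y$, flat over $\Delta$ with fibers of degree $d$; hence $S$ has class $dH$ in $\mathrm{Pic}(Y)$ and $f$ is the restriction of the first projection. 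I would take $\mathcal{L}:=\mathcal{O}_Y(H)|_{S}$; then $\mathcal{L}|_{F_t}$ is the (unique) $\mathfrak{g}^{2}_{d}$ on $F_t$ for every $t\in\Delta$, so $\mathcal{L}$ is an admissible choice in the definition of $\mathrm{Ind}_d$, and by Lemma~\ref{alginvlem} it suffices to compute each term of the defining formula for this particular $\mathcal{L}$.

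First I would dispose of the term $J^2$. Adjunction, together with $\omega_Y\cong\mathrm{pr}_1^{*}\omega_\Delta\otimes\mathcal{O}_Y(-3H)$ and $\mathcal{O}_Y(S)\cong\mathcal{O}_Y(dH)$, gives $\omega_S\cong f^{*}\omega_\Delta\otimes\mathcal{L}^{\otimes d-3}$, whence $\omega_f\cong\mathcal{L}^{\otimes d-3}$. Therefore we may take $J=0$, so that $J^2=0$.

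Next I would compute $f_{*}\mathcal{L}^{\otimes k}$, $R^{1}f_{*}\mathcal{L}^{\otimes k}$ and the torsion cokernels $\mathcal{T}_k$ for $k\in\{1,d-2,d-1\}$. Twisting the defining sequence $0\to\mathcal{O}_Y(-dH)\to\mathcal{O}_Y\to\mathcal{O}_S\to 0$ of the hypersurface $S\subset Y$ by $\mathcal{O}_Y(kH)$ yields $0\to\mathcal{O}_Y((k-d)H)\to\mathcal{O}_Y(kH)\to\mathcal{L}^{\otimes k}\to 0$; pushing this forward by the first projection and using $R^{i}\mathrm{pr}_{1*}\mathcal{O}_Y(mH)\cong H^{i}(\mathbb{P}^2,\mathcal{O}_{\mathbb{P}^2}(m))\otimes_{\mathbb{C}}\mathcal{O}_\Delta$, the vanishing $H^{0}(\mathbb{P}^2,\mathcal{O}_{\mathbb{P}^2}(k-d))=H^{1}(\mathbb{P}^2,\mathcal{O}_{\mathbb{P}^2}(k-d))=0$ (valid whenever $k<d$) shows $f_{*}\mathcal{L}^{\otimes k}\cong H^{0}(\mathbb{P}^2,\mathcal{O}_{\mathbb{P}^2}(k))\otimes_{\mathbb{C}}\mathcal{O}_\Delta$ is a free sheaf for $k\in\{1,d-2,d-1\}$, while the extra vanishing $H^{2}(\mathbb{P}^2,\mathcal{O}_{\mathbb{P}^2}(k-d))=0$ for $k-d\in\{-1,-2\}$ shows $R^{1}f_{*}\mathcal{L}^{\otimes d-2}=R^{1}f_{*}\mathcal{L}^{\otimes d-1}=0$. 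Under these identifications the canonical map $\mathrm{Sym}^{k}f_{*}\mathcal{L}\to f_{*}\mathcal{L}^{\otimes k}$ becomes the Veronese multiplication map $\mathrm{Sym}^{k}H^{0}(\mathbb{P}^2,\mathcal{O}_{\mathbb{P}^2}(1))\to H^{0}(\mathbb{P}^2,\mathcal{O}_{\mathbb{P}^2}(k))$ tensored with $\mathcal{O}_\Delta$, which is surjective since $\mathbb{P}^2$ is projectively normal; hence $\mathcal{T}_{d-2}=\mathcal{T}_{d-1}=0$. Substituting $J^2=0$ together with these vanishing lengths into the defining formula gives $\mathrm{Ind}_d(F_0)=0$.

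The argument is essentially bookkeeping and I do not expect a genuine obstacle. The only point deserving care is that the coherent sheaves $f_{*}\mathcal{L}^{\otimes k}$ and $\mathcal{T}_k$ must be controlled over the \emph{whole} disk, with no jump at the origin; this is exactly what the vanishing statements above furnish uniformly, so that cohomology and base change (equivalently, the long exact sequence above) yields the stated identifications verbatim. Note that no property of the central fiber beyond $S\subset\Delta\times\mathbb{P}^2$ being a smooth hypersurface is actually used: the Lefschetz hypothesis enters only through the irreducibility of $F_0$ and relative minimality. In particular the same computation shows $\mathrm{Ind}_d(F_0)=0$ for any fiber germ in $\mathcal{A}_d$ that can be realized as a smooth hypersurface in $\Delta\times\mathbb{P}^2$.
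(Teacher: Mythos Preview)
Your proof is correct and follows essentially the same approach as the paper: take $\mathcal{L}=\mathcal{O}(1)|_S$ from the ambient $\Delta\times\mathbb{P}^2$, observe $\omega_f\cong\mathcal{L}^{\otimes d-3}$ so $J=0$, and verify that all four length terms in the defining formula vanish. The only methodological difference is that you compute the pushforwards via the relative ideal sequence $0\to\mathcal{O}_Y((k-d)H)\to\mathcal{O}_Y(kH)\to\mathcal{L}^{\otimes k}\to 0$ on $Y=\Delta\times\mathbb{P}^2$, whereas the paper argues fiberwise (noting $H^1(F_0,\mathcal{L}^{\otimes k}|_{F_0})=0$ for $k=d-2,d-1$) and appeals implicitly to cohomology and base change; these are equivalent computations. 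Your closing remark that only the smoothness of $S\subset\Delta\times\mathbb{P}^2$ is really used anticipates the paper's Proposition~4.7.
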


\begin{proof}
We can take a line bundle $\mathcal{L}$ defining $\mathrm{Ind}_d(F_0)$ such that $\mathcal{L}^{\otimes d-3}\simeq \omega_f$ by restricting $\mathcal{O}(1)$ on $\Delta\times \mathbb{P}^2$ to $S$.
Moreover, we can see that $R^{1}f_{*}\mathcal{L}^{\otimes d-2}=R^{1}f_{*}\mathcal{L}^{\otimes d-1}=\mathcal{T}_{d-2}=\mathcal{T}_{d-1}=0$ since $F_0$ is irreducible and $H^1(F_0,\mathcal{L}^{\otimes d-2}|_{F_0})=H^1(F_0,\mathcal{L}^{\otimes d-1}|_{F_0})=0$.
Thus we have $\mathrm{Ind}_d(F_0)=0$.
\end{proof}

\begin{lem}\label{poslem}
For any fiber germ $(f,F_0)$ in $\mathcal{A}_d$, the value $\mathrm{Ind}_d(F_0)$ is non-negative.
\end{lem}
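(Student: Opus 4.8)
The plan is to deduce the non-negativity from the global slope inequality of Theorem~\ref{lowerboundthm} by spreading the fiber germ out into a projective plane curve fibration. First I would fix $(f\colon S\to \Delta,F_0)$ in $\mathcal{A}_d$ and, as in the proof of Lemma~\ref{alginvlem}, choose $n$ large enough that $\mathrm{Ind}_d(F_0)$ is completely determined by $S_n$ together with the restricted line bundle $\mathcal{L}_n:=\mathcal{L}|_{S_n}$. Applying Lemma~\ref{alglem} with this $n$, I obtain a relatively minimal plane curve fibration $\overline{f}\colon \overline{S}\to \mathbb{P}^1$ of degree $d$ whose central fiber germ $\overline{F}_0$ satisfies $\overline{S}_n\simeq S_n$ over $\mathrm{Spec}\,\mathbb{C}[t]/(t^n)$, and all of whose other singular fibers are irreducible Lefschetz plane curves of degree $d$ sitting in $\mathbb{P}^1\times \mathbb{P}^2$.

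Next I would combine the results already in place. Since $\mathrm{Ind}_d$ is an algebraic invariant (Lemma~\ref{alginvlem}) and $S_n\simeq \overline{S}_n$, we get $\mathrm{Ind}_d(F_0)=\mathrm{Ind}_d(\overline{F}_0)$. Each singular fiber of $\overline{f}$ other than $\overline{F}_0$ is, by construction, an irreducible Lefschetz plane curve of degree $d$ whose surrounding total space lies in $\mathbb{P}^1\times \mathbb{P}^2$, hence a Lefschetz fiber germ of type $0$, so $\mathrm{Ind}_d$ vanishes on it by Lemma~\ref{Ind0lem}; it also vanishes on the smooth fibers of $\overline{f}$. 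Therefore the slope equality \eqref{slopeeq}, applied to the relatively minimal plane curve fibration $\overline{f}$, collapses to
\[
K_{\overline{f}}^2=\frac{6(d-3)}{d-2}\chi_{\overline{f}}+\mathrm{Ind}_d(\overline{F}_0).
\]
On the other hand, Theorem~\ref{lowerboundthm} gives $K_{\overline{f}}^2\ge \frac{6(d-3)}{d-2}\chi_{\overline{f}}$, whence $\mathrm{Ind}_d(\overline{F}_0)\ge 0$ and consequently $\mathrm{Ind}_d(F_0)\ge 0$.

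The only point requiring care is that the auxiliary fibration produced by Lemma~\ref{alglem} genuinely has the two structural features used above: it is relatively minimal, and its non-central singular fibers really are Lefschetz fiber germs of type $0$ in the sense of the Definition preceding Lemma~\ref{Ind0lem} — in particular their total spaces near those fibers remain embedded in $\mathbb{P}^1\times \mathbb{P}^2$. This follows from the generality of the polynomials $\psi_i$ in Lemma~\ref{alglem} (via Kuno's result) together with the observation that an irreducible nodal fiber contains no $(-1)$-curve, so that passing to the relatively minimal model over $\mathbb{P}^1$ leaves the surface unchanged near those fibers. Granting this, the argument is a direct assembly of Lemmas~\ref{alglem}, \ref{alginvlem}, \ref{Ind0lem}, the slope equality \eqref{slopeeq}, and Theorem~\ref{lowerboundthm}, with no further computation needed.
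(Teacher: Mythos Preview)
Your proof is correct and follows essentially the same approach as the paper's own proof: use Lemma~\ref{alglem} to approximate the germ by a global fibration over $\mathbb{P}^1$ with only type~$0$ Lefschetz singular fibers away from the center, invoke Lemma~\ref{alginvlem} to identify $\mathrm{Ind}_d(F_0)$ with $\mathrm{Ind}_d(\overline{F}_0)$, kill the other contributions via Lemma~\ref{Ind0lem}, and conclude by combining \eqref{slopeeq} with Theorem~\ref{lowerboundthm}. Your added remarks on relative minimality and on the type~$0$ condition near the non-central fibers are reasonable clarifications but do not change the argument.
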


\begin{proof}
Fix a fiber germ $(f\colon S\to \Delta,F_0)$ in $\mathcal{A}_d$ arbitrarily. 
Since $\mathrm{Ind}_d$ is an algebraic invariant,
we can take a natural number $n$ such that for any fiber germ $(f'\colon S'\to \Delta,F'_0)$ of $\mathcal{A}_d$ such that $S_n\simeq S'_n$, we have $\mathrm{Ind}_d(F_0)=\mathrm{Ind}_d(F'_0)$. 
From Lemma~\ref{alglem}, we can take a plane curve fibration $\overline{f}\colon \overline{S}\to \mathbb{P}^1$ of degree $d$ such that $S_n\simeq \overline{S}_n$ and any other fiber germ of $\overline{f}$ is Lefschetz of type $0$.
Thus we get from \eqref{slopeeq}, Theorem~\ref{lowerboundthm} and Lemma~\ref{Ind0lem} that
$$
\mathrm{Ind}_d(F_0)=\mathrm{Ind}_d(\overline{F}_0)=K_f^2-\frac{6(d-3)}{d-2}\chi_f\ge 0.
$$
\end{proof}

Combining \eqref{slopeeq} with Lemma~\ref{alginvlem} and Lemma~\ref{poslem}, we get Theorem~\ref{mainthm}.

\begin{prop}
For a fiber germ $(f\colon S\to \Delta,F_0)\in \mathcal{A}_d$, $\mathrm{Ind}_d(F_0)=0$ holds if and only if $S$ is obtained by resolving singularities of some family $X\subset \Delta\times \mathbb{P}^2$ of plane curves of degree $d$ with at most rational double points as singularities.
\end{prop}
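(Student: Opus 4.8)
The plan is to prove the two implications separately; the non-trivial direction will run through the global characterization in Proposition~\ref{equalprop} via the algebraization of Lemma~\ref{alglem}.

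\emph{The ``if'' direction.} Suppose $\rho\colon S\to X$ is the minimal resolution of a family $X\subset\Delta\times\mathbb{P}^2$ of plane curves of degree $d$ with at most rational double points. Since such a resolution introduces only $(-2)$-curves, while the strict transform of any component $C$ of the central plane curve satisfies $K_S\cdot C=(d-3)\deg C\ge 0$ for $d\ge 4$, the morphism $f\colon S\to\Delta$ has no $(-1)$-curves in fibres, so $S$ indeed represents our germ. I would then compute $\mathrm{Ind}_d(F_0)$ directly, exactly as in Lemma~\ref{Ind0lem}, taking $\mathcal{L}:=\rho^{*}(\mathcal{O}_{\Delta\times\mathbb{P}^2}(1)|_X)$: crepancy of $\rho$ gives $\omega_f\simeq\mathcal{L}^{\otimes d-3}$, so one may take $J=0$; rationality of the singularities gives $R^{1}\rho_{*}\mathcal{O}_S=0$, whence $f_{*}\mathcal{L}^{\otimes k}\simeq(\mathrm{pr}_\Delta)_{*}(\mathcal{O}_{\Delta\times\mathbb{P}^2}(k)|_X)$ and $R^{1}f_{*}\mathcal{L}^{\otimes k}\simeq R^{1}(\mathrm{pr}_\Delta)_{*}(\mathcal{O}_{\Delta\times\mathbb{P}^2}(k)|_X)$ by the projection formula; and pushing forward $0\to\mathcal{O}_{\Delta\times\mathbb{P}^2}(k-d)\to\mathcal{O}_{\Delta\times\mathbb{P}^2}(k)\to\mathcal{O}_{\Delta\times\mathbb{P}^2}(k)|_X\to 0$, using $H^{1}(\mathbb{P}^2,\mathcal{O}(m))=0$ for all $m$ and $H^{2}(\mathbb{P}^2,\mathcal{O}(m))=0$ for $m\ge -2$, one gets $f_{*}\mathcal{L}^{\otimes k}\simeq H^{0}(\mathbb{P}^2,\mathcal{O}(k))\otimes\mathcal{O}_\Delta$ for $k\le d-1$ (hence $\mathcal{T}_{d-2}=\mathcal{T}_{d-1}=0$, since $\mathrm{Sym}^{k}H^{0}(\mathcal{O}(1))\to H^{0}(\mathcal{O}(k))$ is an isomorphism on $\mathbb{P}^2$) and $R^{1}f_{*}\mathcal{L}^{\otimes d-2}=R^{1}f_{*}\mathcal{L}^{\otimes d-1}=0$. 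Thus every term defining $\mathrm{Ind}_d(F_0)$ vanishes.

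\emph{The ``only if'' direction.} Assume $\mathrm{Ind}_d(F_0)=0$. Fixing a large $n$ and applying Lemma~\ref{alglem}, choose a plane curve fibration $\overline{f}\colon\overline{S}\to\mathbb{P}^1$ of degree $d$ with $S_n\simeq\overline{S}_n$ whose other singular fibres are all Lefschetz of type $0$. Since $\mathrm{Ind}_d$ is an algebraic invariant (Lemma~\ref{alginvlem}) vanishing on Lefschetz fibre germs of type $0$ (Lemma~\ref{Ind0lem}), Theorem~\ref{mainthm} gives $K_{\overline{f}}^{2}=\tfrac{6(d-3)}{d-2}\chi_{\overline{f}}$ as soon as $n$ exceeds the invariance threshold of $(f,F_0)$; by Proposition~\ref{equalprop} the surface $\overline{S}$ is then the minimal resolution of a member $\overline{X}$ of $|d\mathcal{O}_W(1)+\phi^{*}\mathfrak{k}|$ on a $\mathbb{P}^2$-bundle $W=\mathbb{P}(\mathcal{E})\to\mathbb{P}^1$ with at most rational double points. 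Restricting over a disc in $\mathbb{P}^1$ around $0$ over which $\mathcal{E}$ trivialises, the germ $(\overline{f},\overline{F}_0)$ is the minimal resolution of a family $\overline{X}_0\subset\Delta\times\mathbb{P}^2$ of plane curves of degree $d$ with at most rational double points.

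The remaining step, transferring this from $(\overline{f},\overline{F}_0)$ to $(f,F_0)$ itself, is where I expect the real work to lie. My plan is to compare $\overline{X}_0$ with the ``naive'' image $X\subset\Delta\times\mathbb{P}^2$ of $(f,F_0)$ obtained, as in the proof of Lemma~\ref{alglem}, from the line bundle $\mathcal{L}$ via Riemann's extension theorem; note $X$ is well defined up to an automorphism of $\Delta\times\mathbb{P}^2$ because the $\mathfrak{g}^2_d$ on each smooth fibre is unique. Arranging (as in the proof of Lemma~\ref{alginvlem}) that the relevant line bundles match under $S_n\simeq\overline{S}_n$, the hypersurfaces $X$ and $\overline{X}_0$ agree modulo $t^{n+1}$; since rational double points are finitely determined, for $n$ large enough $X$ then has the same singularity types as $\overline{X}_0$, in particular at most rational double points. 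Finally, $X$ being normal with only rational double points, its minimal resolution is relatively minimal over $\Delta$, birational to $S$ over $\Delta$ and isomorphic to $S$ away from $0$, so by uniqueness of the relatively minimal model of a fibre germ of genus $g=(d-1)(d-2)/2\ge 3$ it coincides with $S$. The delicate point, to be handled with care, is that the finite-determinacy orders of the rational double points occurring on $\overline{X}_0$ must be bounded independently of (or prior to fixing) the auxiliary parameters in the algebraization, so that one choice of $n$ serves both purposes; everything else reduces either to the cohomological computation above or to citing Proposition~\ref{equalprop} and the uniqueness of relatively minimal fibrations.
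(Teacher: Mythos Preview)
Your approach is exactly the one the paper intends: the one-line proof in the paper merely cites Proposition~\ref{equalprop} and Lemma~\ref{alglem}, and you have correctly unpacked what this means, namely algebraize, deduce the slope equality for $\overline{f}$, apply the global characterization, then transfer back. Your direct cohomological computation for the ``if'' direction is cleaner than routing that implication through algebraization as well.

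Regarding the delicate point you flag in the transfer step: your worry is slightly misdirected, and reorienting it dissolves the circularity. Do not try to bound the determinacy orders of the rational double points of $\overline{X}_0$ (which depend on $n$); instead work with the singularities of the \emph{intrinsic} image $X=\{\varphi=0\}$ of the original germ. First observe that $X$ has only isolated singularities: the singular locus of $X$ lies in the central fibre, and at $t=0$ it is cut out by $\varphi(0),\partial_X\varphi(0),\partial_Y\varphi(0),\partial_Z\varphi(0),\partial_t\varphi(0)$, which coincide with the corresponding data for $\overline{X}_0$ as soon as $n\ge 1$; since $\overline{X}_0$ has only isolated (indeed RDP) singularities, so does $X$. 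Isolated hypersurface singularities are finitely determined, so the determinacy orders of the finitely many germs $(X,p_i)$ are intrinsic to $(f,F_0)$ and computable \emph{before} choosing $n$. Now take $n$ larger than all of these (and than the algebraic-invariance threshold); then $\overline{X}_0$ agrees with $X$ beyond every determinacy order, hence has the same singularity types as $X$. Since $\overline{X}_0$ has only RDPs, so does $X$, and your concluding uniqueness-of-minimal-model argument finishes the proof.
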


\begin{proof}
From Proposition~\ref{equalprop} and Theorem~\ref{alglem}, we get the assertion.
\end{proof}

\section{Local signature}

For an oriented compact real $4$-dimensional manifold $X$, the {\em signature} $\mathrm{Sign}(X)$ is defined to be the number of positive eigenvalues minus the number of negative eigenvalues of the intersection form on $H^2(X)$.
For a given condition $\mathcal{P}$ on smooth curves, let $\mathcal{A}_{\mathcal{P}}$ be the set of holomorphically equivalence classes of fiber germs whose general fiber has the condition $\mathcal{P}$.
Then a $\mathbb{Q}$-valued function $\sigma\colon \mathcal{A}_{\mathcal{P}}\to \mathbb{Q}$
is a {\em local signature} 
if for any relatively minimal fibered surface $f\colon S\to B$ whose general fiber $F$ satisfies the condition $\mathcal{P}$, we have
$\sigma(F)=0$ and $\mathrm{Sign}(X)=\sum_{p\in B}\sigma(F_p)$.

First, we briefly review the study of local signatures from the topological point of view. For more details, \cite{Ku2} is a good survey.
Let $P=\Sigma_{0,3}$ denote a pair of pants, that is, an oriented real surface obtained from $\Sigma_0=S^{2}$ by removing $3$ open disks with embedded disjoint closures and fix a base point $p_0\in P\setminus \partial P$ and two based loops $l_1$, $l_2$, $l_3$ with the relation $l_1l_2l_3=1$ in $\pi_1(P, p_0)$ which is homotopic to one of boundaries of $P$ with the counter clockwise orientation, respectively.
Let $\Gamma_g$ denote the mapping class group of $\Sigma_g$, a closed oriented real surface of genus $g$.
It was shown by Meyer that there is a $1$-cocycle $\tau_g\colon \Gamma_g\times \Gamma_g\to \mathbb{Z}$, which is called {\em Meyer's signature cocycle} such that
$\tau_g(\alpha_1,\alpha_2)=\mathrm{Sign}(E(\alpha_1,\alpha_2))$, where $E(\alpha_1, \alpha_2)\to P$ is a $\Sigma_g$-bundle whose monodromy $\mu\colon \pi_1(P,p_0)=\langle l_1,l_2\rangle \to \Gamma_g$ sends $l_i$ to $\alpha_i$ for $i=1,2$ (such a bundle exists and is unique up to homeomorphism).

Let $\mathcal{P}$ be a property of smooth projective curves of genus $g$.
We consider the following condition:

\smallskip

\noindent
$(*)_{\mathcal{P}}$ There exist a group $\Gamma_{\mathcal{P}}$ and a homomorphism $\iota_{\mathcal{P}}\colon \Gamma_{\mathcal{P}}\to \Gamma_g$ such that
for any $1$-parameter family $f\colon X\to C$ of smooth projective curves with $\mathcal{P}$, the monodromy map $\mu_f\colon \pi_1(C,c_0)\to \Gamma_g$ factors through $\Gamma_{\mathcal{P}}$, that is, there is a homomorphism $\mu_{f,\mathcal{P}}\colon \pi_1(C,c_0)\to \Gamma_{\mathcal{P}}$ with $\mu_f=\iota_{\mathcal{P}}\circ \mu_{f,\mathcal{P}}$
and for any open analytic subset $C'\subset C$, the homomorphisms $\mu_{f,\mathcal{P}}$ and $\mu_{f',\mathcal{P}}$ are compatible with the natural homomorphism $\pi_1(C',c_0)\to \pi_1(C,c_0)$, where $c_0\in C'\subset C$ is a base point and $f':=f|_{f^{-1}(C')}$.
Moreover, the pull back $\iota_{\mathcal{P}}^{*}\tau_g$ is a $\mathbb{Q}$-coboundary, that is, there is a function $\phi_{\mathcal{P}}\colon \Gamma_{\mathcal{P}}\to \mathbb{Q}$ such that 
$$
\tau_g(\iota_{\mathcal{P}}(a_1), \iota_{\mathcal{P}}(a_2))=\phi_{\mathcal{P}}(a_1)+\phi_{\mathcal{P}}(a_2)-\phi_{\mathcal{P}}(a_1a_2)
$$
for any $a_1,a_2\in \Gamma_{\mathcal{P}}$.

\smallskip

\noindent
If $\mathcal{P}$ satisfies the condition $(*)_{\mathcal{P}}$, we call 
the function $\phi_{\mathcal{P}}$ the {\em Meyer function on $\Gamma_{\mathcal{P}}$}.
Under the above situation, we can define a local signature from the Meyer function:

\begin{prop}
Let $\mathcal{P}$ be a property of smooth projective curves of genus $g$ which satisfies $(*)_{\mathcal{P}}$ and
define the function $\sigma_{\mathcal{P}}\colon \mathcal{A}_{\mathcal{P}}\to \mathbb{Q}$ by
$$
\sigma_{\mathcal{P}}(F_0):=\mathrm{Sign}(f^{-1}(\overline{\Delta'}))+\phi_{\mathcal{P}}(\mu_{f_0, \mathcal{P}}(\partial \Delta'))
$$ for any fiber germ $(f\colon S\to \Delta,F_0=f^{-1}(0))$ with $\mathcal{P}$, where $\Delta' \subset \Delta$ is an open disk centered at $0$ with $\overline{\Delta'}\subset \Delta$ and $f_0:=f|_{f^{-1}(\Delta\setminus \{0\})}$.
Then $\sigma_{\mathcal{P}}$ defines a local signature, that is,
for any fibered surface $f\colon S\to B$ whose general fiber satisfies $\mathcal{P}$, we have
$\sigma_{\mathcal{P}}(F)=0$ for any general fiber $F$ of $f$ and
$$
\mathrm{Sign}(S)=\sum_{p\in B}\sigma_{\mathcal{P}}(F_p).
$$
\end{prop}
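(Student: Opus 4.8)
The plan is to follow the standard scheme for producing a local signature from a Meyer function (as in \cite{Te}, \cite{ak}), the two ingredients being Novikov additivity of the signature and the defining property $\tau_g(\alpha_1,\alpha_2)=\mathrm{Sign}(E(\alpha_1,\alpha_2))$ of Meyer's cocycle. Three things must be checked: that $\sigma_{\mathcal{P}}(F_0)$ is independent of the auxiliary disk $\Delta'$, that $\sigma_{\mathcal{P}}$ vanishes on general fibers, and the additivity formula itself.

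\emph{Well-definedness and vanishing on general fibers.} If $\overline{\Delta''}\subset\Delta'$ are disks centered at $0$, Novikov additivity gives $\mathrm{Sign}(f^{-1}(\overline{\Delta'}))=\mathrm{Sign}(f^{-1}(\overline{\Delta''}))+\mathrm{Sign}(f^{-1}(A))$, where $A$ is the annulus between $\partial\Delta''$ and $\partial\Delta'$; the last term is $0$ because a surface bundle over an annulus has the form $N\times[0,1]$ for a closed oriented $3$-manifold $N$, which carries an orientation-reversing self-diffeomorphism, so its signature vanishes. Taking the basepoints of $\partial\Delta'$ and $\partial\Delta''$ on a common ray, the two loops represent the same class in $\pi_1(\Delta\setminus\{0\})$, hence have the same image under $\mu_{f_0,\mathcal{P}}$ by the compatibility clause of $(*)_{\mathcal{P}}$; so the two expressions for $\sigma_{\mathcal{P}}(F_0)$ agree. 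For a general (hence smooth) fiber $F=f^{-1}(t)$, take $\Delta'$ a small disk around $t$ containing no critical value: then $f^{-1}(\overline{\Delta'})\cong\Sigma_g\times\overline{\Delta'}$ has signature $0$, while $\partial\Delta'$ is null-homotopic in the smooth locus, so $\mu_{f_0,\mathcal{P}}(\partial\Delta')=1$; since $\phi_{\mathcal{P}}(1)=0$ (put $a_1=a_2=1$ in the coboundary relation and use $\tau_g(1,1)=\mathrm{Sign}(\Sigma_g\times P)=0$), we get $\sigma_{\mathcal{P}}(F)=0$. In particular only the finitely many critical values of $f$ contribute to the sum.

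\emph{The additivity formula.} Let $p_1,\dots,p_k$ be the critical values, choose disjoint closed disks $\overline{\Delta_i}\ni p_i$, and put $B_0:=B\setminus\bigcup_i\Delta_i$, a compact surface of genus $b=g(B)$ with $k$ boundary circles. Novikov additivity gives
\[
\mathrm{Sign}(S)=\sum_{i=1}^{k}\mathrm{Sign}(f^{-1}(\overline{\Delta_i}))+\mathrm{Sign}(f^{-1}(B_0)),
\]
so it remains to prove $\mathrm{Sign}(f^{-1}(B_0))=\sum_{i=1}^{k}\phi_{\mathcal{P}}(\mu_{f_0,\mathcal{P}}(\partial\Delta_i))$. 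Fix a pants decomposition $B_0=\bigcup_j P_j$ together with a basepoint and connecting paths; by $(*)_{\mathcal{P}}$ the monodromy of $f^{-1}(B_0)\to B_0$ lifts to a homomorphism into $\Gamma_{\mathcal{P}}$, compatibly on each $P_j$. Applying Novikov additivity along the decomposition together with $\tau_g(\alpha_1,\alpha_2)=\mathrm{Sign}(E(\alpha_1,\alpha_2))$ (using that a surface bundle over a pair of pants is determined up to homeomorphism by its monodromy) yields $\mathrm{Sign}(f^{-1}(B_0))=\sum_j\tau_g(\iota_{\mathcal{P}}(a_{j,1}),\iota_{\mathcal{P}}(a_{j,2}))$, where $a_{j,1},a_{j,2},a_{j,3}\in\Gamma_{\mathcal{P}}$ are the monodromies along the three oriented boundary loops of $P_j$, normalized so that $a_{j,1}a_{j,2}a_{j,3}=1$. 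Using the coboundary relation, $\phi_{\mathcal{P}}(1)=0$, and $\phi_{\mathcal{P}}(x^{-1})=-\phi_{\mathcal{P}}(x)$ (which follows from $\tau_g(\gamma,\gamma^{-1})=0$, the corresponding bundle extending over the annulus), each summand becomes $\phi_{\mathcal{P}}(a_{j,1})+\phi_{\mathcal{P}}(a_{j,2})+\phi_{\mathcal{P}}(a_{j,3})$. Summing over $j$, each interior curve of the decomposition occurs twice with opposite orientations and its contributions cancel — here one uses that $\phi_{\mathcal{P}}$ is a class function, so that the conjugations introduced by the connecting paths are harmless — leaving exactly the contributions of the boundary curves of $B_0$, i.e.\ of the loops $\partial\Delta_i$. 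Combining this with the displayed identity and the vanishing on general fibers gives $\mathrm{Sign}(S)=\sum_{p\in B}\sigma_{\mathcal{P}}(F_p)$.

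The main obstacle is concentrated in the last step: one must choose basepoints and connecting paths so that the monodromies along the two sides of an interior curve are honest inverses, and, more delicately, one must track all orientation conventions — Meyer's sign convention in $\tau_g(\alpha_1,\alpha_2)=\mathrm{Sign}(E(\alpha_1,\alpha_2))$, and the orientation of $\partial\Delta'$ against the one it induces as part of $\partial B_0$ — so that the boundary terms emerge with exactly the sign appearing in the definition of $\sigma_{\mathcal{P}}$ and not its negative. Everything else reduces to Novikov additivity and the vanishing of the signature of a surface bundle over a disk or an annulus.
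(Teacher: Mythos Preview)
Your argument is correct and follows essentially the same route as the paper: Novikov additivity to separate off the disks $\overline{D_i}$, a pants decomposition of the complement, Meyer's identity $\mathrm{Sign}(E(\alpha_1,\alpha_2))=\tau_g(\alpha_1,\alpha_2)$ on each pair of pants, and then the coboundary relation $\iota_{\mathcal{P}}^{*}\tau_g=\delta\phi_{\mathcal{P}}$ to telescope the sum down to the boundary contributions $\sum_i\phi_{\mathcal{P}}(\mu_{f_0,\mathcal{P}}(\partial D_i))$. The paper's proof is the same chain of equalities, only written more tersely; in particular it does not pause to check well-definedness of $\sigma_{\mathcal{P}}(F_0)$, the vanishing on general fibers, or the auxiliary identities $\phi_{\mathcal{P}}(1)=0$, $\phi_{\mathcal{P}}(x^{-1})=-\phi_{\mathcal{P}}(x)$ that you supply, nor does it address the basepoint/conjugation issue you flag in the telescoping step --- it simply asserts that the pants contributions collapse to $\sum_i\phi_{\mathcal{P}}(\mu_{f_0,\mathcal{P}}(\partial D_i))$.
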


\begin{proof}
Let $f\colon S\to B$ be a fibered surface whose general fiber $F$ has the property $\mathcal{P}$ and the set $\{p_1,\ldots,p_N\}$ of points of $B$ such that the restriction $f_0=f|_{S_0}\colon S_0=f^{-1}(B_0)\to B_0=B\setminus\{p_1,\ldots,p_N\}$ is a family of smooth projective curves with $\mathcal{P}$.
Let $D_i$ be a small open disk neighborhood of $p_i$ in $B$.
We take a pants decomposition $B\setminus\cup_{i=1}^{N}D_i=\cup_{j}P_j$, homeomorphisms $P_j\simeq P=\Sigma_{0,3}$ and loops $l_{j,1}$, $l_{j,2}$ in $P_j$ which sends to $l_1$, $l_2$ via this homeomorphism, respectively.
Thus we have $f^{-1}(P_j)\simeq E(\mu_{f_0}(l_{j,1}),\mu_{f_0}(l_{j,2}))$ as $\Sigma_g$-bundles over $P_j$.
By the Novikov additivity, we have

\begin{align*}
\mathrm{Sign}(S)&=\sum_{i=1}^{N}\mathrm{Sign}(f^{-1}(\overline{D_i}))+\sum_{j}\mathrm{Sign}(f^{-1}(P_j)) \\
&=\sum_{i=1}^{N}\mathrm{Sign}(f^{-1}(\overline{D_i}))+\sum_{j}\mathrm{Sign}(E(\mu_{f_0}(l_{j,1}),\mu_{f_0}(l_{j,2}))) \\
&=\sum_{i=1}^{N}\mathrm{Sign}(f^{-1}(\overline{D_i}))+\sum_{j}\tau_g(\mu_{f_0}(l_{j,1}), \mu_{f_0}(l_{j,2})) \\
&=\sum_{i=1}^{N}\mathrm{Sign}(f^{-1}(\overline{D_i}))+\sum_{j}\iota_{\mathcal{P}}^{*}\tau_g(\mu_{f_0,\mathcal{P}}(l_{j,1}), \mu_{f_0,\mathcal{P}}(l_{j,2})) \\
&=\sum_{i=1}^{N}\mathrm{Sign}(f^{-1}(\overline{D_i}))+\sum_{j}\left(\phi_{\mathcal{P}}(\mu_{f_0,\mathcal{P}}(l_{j,1}))+\phi_{\mathcal{P}}(\mu_{f_0,\mathcal{P}}(l_{j,2}))-\phi_{\mathcal{P}}(\mu_{f_0,\mathcal{P}}(l_{j,1}l_{j,2}))\right) \\
&=\sum_{i=1}^{N}\mathrm{Sign}(f^{-1}(\overline{D_i}))+\sum_{i=1}^{N}\phi_{\mathcal{P}}(\mu_{f_0,\mathcal{P}}(\partial D_i))\\
&=\sum_{i=1}^{N}\sigma_{\mathcal{P}}(F_{p_i}).
\end{align*}
\end{proof}

\begin{exa}[\cite{En}]
Let $\mathcal{P}=\text{hyperelliptic curve of genus $g$}$.
Then the condition $\mathcal{P}$ satisfies $(*)_{\mathcal{P}}$.
In fact, $\Gamma_{\mathcal{P}}$ is the centralizer of the class of the hyperelliptic involution in $\Gamma_g$ and $\iota_{\mathcal{P}}$ is a natural injection $\Gamma_{\mathcal{P}}\subset \Gamma_g$.
\end{exa}

\begin{exa}[\cite{Ku}]
Let $\mathcal{P}=\text{plane curve of degree $d$}$.
Then the condition $\mathcal{P}$ satisfies $(*)_{\mathcal{P}}$.
We denote the corresponding local signature by $\sigma_d^{\mathrm{top}}$.
\end{exa}

On the other hand, we can define another local signature for plane curve fibrations by using the Horikawa index $\mathrm{Ind}_d$:

\begin{defn}
We define $\sigma_d^{\mathrm{alg}}\colon \mathcal{A}_d\to \mathbb{Q}$ by
$$
\sigma_d^{\mathrm{alg}}=\frac{4}{12-\lambda_d}\mathrm{Ind}_d-\frac{8-\lambda_d}{12-\lambda_d}e,
$$
where $\lambda_d:=6(d-3)/(d-2)$ and $e\colon \mathcal{A}_d\to \mathbb{Q}$ is defined by $e(f,F):=e_{\mathrm{top}}(F)-2+2g$, which is clearly an algebraic invariant.
\end{defn}

\begin{prop}[cf.\ \cite{ak}]
For a relatively minimal plane curve fibration $f\colon S\to B$ of degree $d$,
we have
$$
\mathrm{Sign}(S)=\sum_{p\in B}\sigma_d^{\mathrm{alg}}(F_p).
$$
\end{prop}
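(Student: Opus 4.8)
The plan is to deduce everything from the Hirzebruch index theorem $\mathrm{Sign}(S)=\tfrac13\bigl(K_S^2-2e_{\mathrm{top}}(S)\bigr)$, rewriting both $K_S^2$ and $e_{\mathrm{top}}(S)$ in terms of the relative invariants of $f$ and the local quantities $\mathrm{Ind}_d$ and $e$. First I would record the two standard identities for a relatively minimal fibration $f\colon S\to B$ of genus $g$ with $b:=g(B)$: namely
$$
K_S^2=K_f^2+8(g-1)(b-1),
$$
which follows from $K_S=K_f+f^{*}K_B$ together with $(f^{*}K_B)^2=0$ and $K_f\cdot f^{*}K_B=(2g-2)(2b-2)$; and
$$
e_{\mathrm{top}}(S)=e_{\mathrm{top}}(B)\,e_{\mathrm{top}}(F)+\sum_{p\in B}\bigl(e_{\mathrm{top}}(F_p)-e_{\mathrm{top}}(F)\bigr)=4(g-1)(b-1)+\sum_{p\in B}e(F_p),
$$
using the definition $e(F_p)=e_{\mathrm{top}}(F_p)-2+2g$. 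In particular $e(F)=0$ and $\mathrm{Ind}_d(F)=0$ for a general fiber $F$ (the latter by Theorem~\ref{mainthm}), so $\sigma_d^{\mathrm{alg}}(F)=0$ and the sum on the right-hand side is finite.

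Next I would feed the slope equality of Theorem~\ref{mainthm}, $K_f^2=\lambda_d\chi_f+\sum_{p}\mathrm{Ind}_d(F_p)$, into these identities. Writing $I:=\sum_p\mathrm{Ind}_d(F_p)$ and $E:=\sum_p e(F_p)$ for brevity, Noether's formula $12\chi(\mathcal{O}_S)=K_S^2+e_{\mathrm{top}}(S)$ combined with $\chi_f=\chi(\mathcal{O}_S)-(g-1)(b-1)$ gives, after the terms $8(g-1)(b-1)$ and $4(g-1)(b-1)$ cancel against $12(g-1)(b-1)$, the relation $12\chi_f=\lambda_d\chi_f+I+E$, hence $\chi_f=(I+E)/(12-\lambda_d)$; here $12-\lambda_d=6(d-1)/(d-2)>0$, so the division is legitimate. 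Substituting this together with the slope equality into
$$
\mathrm{Sign}(S)=\tfrac13\bigl(K_S^2-2e_{\mathrm{top}}(S)\bigr)=\tfrac13\bigl(\lambda_d\chi_f+I-2E\bigr)
$$
and simplifying yields $\mathrm{Sign}(S)=\tfrac{4}{12-\lambda_d}I-\tfrac{8-\lambda_d}{12-\lambda_d}E$, which is precisely $\sum_{p\in B}\sigma_d^{\mathrm{alg}}(F_p)$ by the definition of $\sigma_d^{\mathrm{alg}}$.

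There is no serious obstacle here: once Theorem~\ref{mainthm} is available, the statement is a formal manipulation of characteristic-number identities, and indeed the coefficients $\tfrac{4}{12-\lambda_d}$ and $-\tfrac{8-\lambda_d}{12-\lambda_d}$ in the definition of $\sigma_d^{\mathrm{alg}}$ were chosen exactly so that this affine combination of $\mathrm{Ind}_d$ and $e$ has global sum equal to the signature. The only points requiring a little care are the bookkeeping of the $(g-1)(b-1)$-terms (checking they cancel as claimed) and the positivity of $12-\lambda_d$; both are immediate.
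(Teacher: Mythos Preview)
Your argument is correct and is essentially the same as the paper's: both combine Hirzebruch's signature theorem, Noether's formula, and the slope equality of Theorem~\ref{mainthm}. The paper merely packages the first two into the relative identities $\mathrm{Sign}(S)=K_f^2-8\chi_f$ and $12\chi_f=K_f^2+e_f$, which lets one skip the $(g-1)(b-1)$ bookkeeping you carried out explicitly.
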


\begin{proof}
The claim holds from Hirzebruch's signature theorem $\mathrm{Sign}(S)=K_f^2-8\chi_f$, Theorem~\ref{mainthm} and Noether's formula $12\chi_f=K_f^2+e_f$.
\end{proof}

Now, we show that two local signatures $\sigma_d^{\mathrm{alg}}$ and $\sigma_d^{\mathrm{top}}$ coincide on $\mathcal{A}_d$:

\begin{thm}[cf.\ \cite{Te}]
We have $\sigma_d^{\mathrm{alg}}(F_0)=\sigma_d^{\mathrm{top}}(F_0)$ for any fiber germ $(f,F_0)$ in $\mathcal{A}_d$.
\end{thm}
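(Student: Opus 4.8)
The plan is to follow Terasoma's strategy in \cite{Te}: show that the difference $\delta:=\sigma_d^{\mathrm{alg}}-\sigma_d^{\mathrm{top}}$ is an algebraic invariant on $\mathcal{A}_d$ which sums to zero over every global plane curve fibration, and then annihilate it using the algebraization of fibers from \S4. First I would record these two properties of $\delta$. Both $\sigma_d^{\mathrm{alg}}$ and $\sigma_d^{\mathrm{top}}$ are local signatures for plane curve fibrations of degree $d$ (the former by the proposition stating $\mathrm{Sign}(S)=\sum_{p\in B}\sigma_d^{\mathrm{alg}}(F_p)$ for relatively minimal plane curve fibrations of degree $d$, the latter by applying the proposition that $\sigma_{\mathcal{P}}$ is a local signature to $\mathcal{P}=$ ``plane curve of degree $d$''), so for every relatively minimal plane curve fibration $f\colon S\to B$ of degree $d$ one has $\sum_{p\in B}\delta(F_p)=\mathrm{Sign}(S)-\mathrm{Sign}(S)=0$, and $\delta(F)=0$ on a general fiber $F$, so only finitely many summands are nonzero. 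Moreover $\delta$ is an algebraic invariant: $\sigma_d^{\mathrm{alg}}$ is one because $\mathrm{Ind}_d$ is by Lemma~\ref{alginvlem} and $e$ evidently is; and $\sigma_d^{\mathrm{top}}(f,F_0)=\mathrm{Sign}(f^{-1}(\overline{\Delta'}))+\phi_{\mathcal{P}}(\mu_{f_0,\mathcal{P}}(\partial\Delta'))$ is one because the first summand is the signature of the intersection matrix $(\Theta_i\cdot\Theta_j)_S$ of the irreducible components $\Theta_i$ of $F_0$, which is determined by $S_n$ for $n\gg 0$, while the second depends only on the topological monodromy $\mu_f$, already known to be an algebraic invariant.

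Next I would deduce $\delta\equiv 0$. The key special case is that $\delta$ vanishes on Lefschetz fiber germs of type $0$: all such germs are topologically equivalent (the monodromy is a single non-separating Dehn twist), they satisfy $\mathrm{Ind}_d=0$ by Lemma~\ref{Ind0lem}, and $e(f,F_0)=e_{\mathrm{top}}(F_0)-2+2g=1$, so $\delta$ takes a single value $\delta_0$ on them. Blowing up the base points of a general pencil of plane curves of degree $d$ yields a relatively minimal plane curve fibration $g\colon V\to\mathbb{P}^1$ of degree $d$ whose singular fibers are all irreducible $1$-nodal members, hence Lefschetz of type $0$, and positive in number, say $N_0$; then $0=\sum_{p}\delta(g^{-1}(p))=N_0\delta_0$, so $\delta_0=0$. (Alternatively one may apply Lemma~\ref{alglem} directly to a Lefschetz fiber germ of type $0$.) For the general case, fix $(f,F_0)\in\mathcal{A}_d$ and choose $n$ witnessing that $\delta$ is an algebraic invariant at $(f,F_0)$. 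By Lemma~\ref{alglem} there is a relatively minimal plane curve fibration $\overline{f}\colon\overline{S}\to\mathbb{P}^1$ of degree $d$ with $S_n\simeq\overline{S}_n$ and all remaining singular fibers Lefschetz of type $0$, whence $\delta(F_0)=\delta(\overline{F}_0)$ and
$$
0=\sum_{p\in\mathbb{P}^1}\delta(\overline{F}_p)=\delta(\overline{F}_0)+(\text{number of Lefschetz fibers})\cdot\delta_0=\delta(F_0).
$$
This gives $\sigma_d^{\mathrm{alg}}(F_0)=\sigma_d^{\mathrm{top}}(F_0)$, as desired.

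The hard part will be the verification that $\sigma_d^{\mathrm{top}}$ is an algebraic invariant: concretely, that the local contribution $\mathrm{Sign}(f^{-1}(\overline{\Delta'}))$ and the topological type of the fiber germ are both recovered from the finite-order neighborhood $S_n$ for $n\gg 0$, so that $\sigma_d^{\mathrm{top}}$ can be fed into the same ``algebraic invariant'' machinery that controls $\mathrm{Ind}_d$; this is the step where the method of \cite{Te} has to be imported with care, reducing $\mathrm{Sign}(f^{-1}(\overline{\Delta'}))$ to the intersection matrix of fiber components and $\phi_{\mathcal{P}}(\mu_{f_0,\mathcal{P}}(\partial\Delta'))$ to the topological monodromy. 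A more routine but still necessary check is that the blown-up Lefschetz pencil above (and the model produced by Lemma~\ref{alglem}) really is a relatively minimal plane curve fibration of degree $d$ in the sense of \S4, so that the propositions on local signatures apply to it verbatim.
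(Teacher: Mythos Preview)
Your argument is correct and follows essentially the same route as the paper: both proofs reduce to showing that $\sigma_d^{\mathrm{alg}}$ and $\sigma_d^{\mathrm{top}}$ are algebraic invariants that agree on Lefschetz fiber germs of type~$0$, and then invoke Lemma~\ref{alglem}. The only cosmetic difference is that the paper establishes agreement on Lefschetz germs by quoting the common explicit value $-\tfrac{d+1}{3(d-1)}$ (the value of $\sigma_d^{\mathrm{top}}$ coming from Kuno's computation in \cite{Ku}), whereas you avoid that citation by running the argument through a generic pencil to force $\delta_0=0$; both work.
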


\begin{proof}
We see that two functions $\sigma_d^{\mathrm{alg}}$ and $\sigma_d^{\mathrm{top}}$ are algebraic invariants.
Moreover, we have 
$$
\sigma_d^{\mathrm{alg}}(F_0)=\sigma_d^{\mathrm{top}}(F_0)=-\frac{d+1}{3(d-1)}
$$
for any Lefschetz fiber germ $(f,F_0)$ of type $0$.
Thus the claim holds from Lemma~\ref{alglem}.
\end{proof}

\section{Durfee-type inequality for hypersurface surface singularities}

In this section, we prove the following theorem as an application of Theorem~\ref{mainthm}:

\begin{thm} \label{cor}
Let $(X,0)$ be an isolated hypersurface surface singularity with Milnor number $\mu$ and geometric genus $p_g>0$.
Then we have
$$
6p_g\le \mu-\chi_{\mathrm{top}}(A),
$$
or equivalently,
$$
\sigma\le -2p_g-1-s,
$$
where $\chi_{\mathrm{top}}(A)$ is the topological Euler number of the exceptional set $A$ of the minimal resolution $\pi\colon \widetilde{X}\to X$
and $s$ is the number of irreducible components of $A$.
In particular, the strong conjecture holds if $\chi_{\mathrm{top}}(A)\ge 0$
and the week conjecture holds for any isolated hypersurface surface singularity.
\end{thm}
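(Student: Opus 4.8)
The plan is to exhibit $\mu-\chi_{\mathrm{top}}(A)-6p_g$ as a positive multiple of the Horikawa index of a single fiber of a suitably chosen plane curve fibration, and then to invoke the non-negativity of $\mathrm{Ind}_d$ from Theorem~\ref{mainthm}. First I would dispose of the rational case: if $p_g=0$ the singularity is a rational double point, whose Milnor lattice is negative definite, so $\sigma=-\mu<0$ and the weak conjecture is immediate for it; hence from now on assume $p_g>0$. Since $(X,0)$ is finitely determined I may replace $h$ by a polynomial, and by adding a general homogeneous form of a sufficiently large degree $d$ --- which does not change the analytic type of $(X,0)$ --- I may arrange that the projective closure $\overline{X}=V(H)\subset\mathbb{P}^3$ is smooth away from $0$ and meets the plane at infinity in a smooth curve.

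Next I construct the fibration. Fix a general linear form $z$ with $z(0)=0$; the pencil of planes $\{z=tw\}_{t\in\mathbb{P}^1}$ cuts on $\overline{X}$ a pencil of plane curves of degree $d$. Let $\pi\colon Y\to\overline{X}$ be the minimal resolution of the singularity at $0$, an isomorphism elsewhere, and put $A=\pi^{-1}(0)$. Since $z$ restricts to a regular function on $\overline{X}$ near $0$, the rational map $Y\dashrightarrow\mathbb{P}^1$ is already a morphism in a neighbourhood of $A$; after blowing up the finitely many base points of the pencil, which lie away from $A$, and contracting the vertical $(-1)$-curves in the remaining fibers --- which, for general $z$ and $d\gg0$, can be kept disjoint from $A$ --- I obtain a relatively minimal plane curve fibration $\overline{f}\colon\overline{S}\to\mathbb{P}^1$ of degree $d$ in which a neighbourhood of $A$ in $Y$ embeds as an open set and $A$ is contained in the single fiber $F_0=\overline{f}^{-1}(0)$, the remaining part of $F_0$ being the strict transform of $\overline{X}\cap\{z=0\}$.

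The crux is the computation of $\mathrm{Ind}_d(F_0)$. With the line bundle $\mathcal{L}$ on $\overline{S}$ coming from the hyperplane class of $\mathbb{P}^3$ (so that $\mathcal{L}|_F$ is the $\mathfrak{g}^2_d$ on a general fiber $F$), I would evaluate the three ingredients $J^2$, $\mathrm{length}_0 R^1\overline{f}_*\mathcal{L}^{\otimes k}$ and $\mathrm{length}_0\mathcal{T}_k$ in the formula for $\mathrm{Ind}_d$ by relative Riemann--Roch on $\overline{S}$ together with the projection formula on $Y$, and then translate the outcome into invariants of $(X,0)$: the dual graph of $A$ supplies $s$ and $\chi_{\mathrm{top}}(A)$; the quantity $p_g=\dim R^1\pi_*\mathcal{O}_Y$ enters through the jump of arithmetic genus along the resolution (Laufer's formula); and $\mu$ enters through the intersection numbers of the pencil along $A$. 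The delicate bookkeeping is that the terms produced by the plane curve singularity of $\overline{X}\cap\{z=0\}$ at $0$ have to be reorganized --- using the L\^e--Greuel relation between $\mu(X,0)$ and its hyperplane section Milnor number, and the shape of the minimal resolution near $A$ in the non-rational case --- so that the final combination is exactly $\dfrac{2(d-3)}{d-2}\bigl(\mu-\chi_{\mathrm{top}}(A)-6p_g\bigr)$. I expect this identification, and in particular the interplay between the surface singularity and the plane curve singularity unavoidably carried by $F_0$, to be the main obstacle.

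Granting the identity, Theorem~\ref{mainthm} applied to $\overline{f}$ gives $\mathrm{Ind}_d(F_0)\ge0$, hence $6p_g\le\mu-\chi_{\mathrm{top}}(A)$. The equivalence with $\sigma\le-2p_g-1-s$ then follows from the classical identities $2p_g=\mu_{+}+\mu_0$ (Durfee), $\mu_0=b_1(L)$ for the link $L$ of $(X,0)$, and $\chi_{\mathrm{top}}(A)=1+s-b_1(L)$, which together give $\chi_{\mathrm{top}}(A)=1+s+\sigma+\mu-4p_g$. Finally, if $\chi_{\mathrm{top}}(A)\ge0$ then $6p_g\le\mu$, the strong conjecture; and the weak conjecture $\sigma\le0$ holds for every isolated hypersurface surface singularity, since for $p_g>0$ one even gets $\sigma\le-2p_g-1-s<0$, while the case $p_g=0$ was settled at the outset.
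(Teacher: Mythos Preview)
Your overall plan---embed the minimal resolution of $(X,0)$ as the special fiber of a relatively minimal plane curve fibration of large degree $d$, identify $\mathrm{Ind}_d(F_0)$ in terms of $\mu$, $p_g$, $\chi_{\mathrm{top}}(A)$, and then invoke $\mathrm{Ind}_d\ge 0$---is exactly the paper's. The construction via a pencil of hyperplane sections in $\mathbb{P}^3$ is a harmless variant of the paper's compactification in $\mathbb{P}^1\times\mathbb{P}^2$.

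The genuine gap is the identity you aim for. The Horikawa index of the special fiber is \emph{not} equal to $\dfrac{2(d-3)}{d-2}\bigl(\mu-\chi_{\mathrm{top}}(A)-6p_g\bigr)$. The correct value (with your $\epsilon=0$) is
\[
\mathrm{Ind}_d(F_0)\;=\;\mu-\chi_{\mathrm{top}}(A)-\Bigl(6+\tfrac{6}{d-2}\Bigr)p_g+1,
\]
and this is obtained far more cheaply than by chasing $J^2$, $R^1f_*\mathcal{L}^{\otimes k}$, $\mathcal{T}_k$, and the L\^e--Greuel relation. Write $\overline f\colon X\to\mathbb{P}^1$ for the singular model and $f\colon S\to\mathbb{P}^1$ for its minimal resolution. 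Since $X$ sits as a relative degree-$d$ hypersurface in a $\mathbb{P}^2$-bundle, a direct computation (as in Proposition~\ref{equalprop}) gives $K_{\overline f}^2=\lambda_d\,\chi_{\overline f}$ with $\lambda_d=\tfrac{6(d-3)}{d-2}$. Subtracting the slope equality $K_f^2=\lambda_d\chi_f+\mathrm{Ind}_d(F_0)$ and using $p_g=\chi_{\overline f}-\chi_f$ and $-K^2=K_{\overline f}^2-K_f^2$ (where $K$ is the canonical cycle of the resolution) yields $\mathrm{Ind}_d(F_0)=K^2+\lambda_d\,p_g$. Laufer's formula $\mu=12p_g+K^2+\chi_{\mathrm{top}}(A)-1$ then gives the expression above.

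Because of the extra $+1$ and the $\tfrac{6}{d-2}p_g$ term, non-negativity of $\mathrm{Ind}_d$ does \emph{not} give $\mu-\chi_{\mathrm{top}}(A)-6p_g\ge 0$ directly; it only gives
\[
\mu-\chi_{\mathrm{top}}(A)-6p_g\;\ge\;\tfrac{6}{d-2}\,p_g-1\;>\;-1
\]
when $p_g>0$. The conclusion then follows because the left-hand side is an integer. You are missing this final integrality step, and your proposed bookkeeping would not produce the formula that makes it work.
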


\begin{defn}
Let $(f\colon S\to \Delta,F_0)$ be a relatively minimal fiber germ of plane curves.
Then we can take a line bundle $\mathcal{L}$ on $S$ such that the restriction $\mathcal{L}|_{F}$ to the general fiber $F$ defines the embedding $F\subset \mathbb{P}^2$ from Theorem~\ref{glueingthm}.
Thus the relative linear system $f_{*}\mathcal{L}$ defines a birational map onto the image $S\dasharrow X\subset \Delta\times \mathbb{P}^2$.
If the image $X$ has only one isolated singularity $x$, we call the pair $(X,x)$ an {\em isolated hypersurface singularity associated to a fiber germ $f\colon S\to \Delta$ of plane curves}.
\end{defn}

For an isolated hypersurface singularity $(X,x)$ associated to a fiber germ $(f,F_0)$ of plane curves of degree $d$, the Horikawa index $\mathrm{Ind}_d(F_0)$ can be computed by some invariants of the singularity $(X,x)$:

\begin{lem}\label{Horlem}
Let $(X,x)$ be an isolated hypersurface singularity associated to a fiber germ $(f\colon S\to \Delta,F_0)$ of plane curves of degree $d$ with Milnor number $\mu$ and geometric genus $p_g$.
Then we have
$$
\mathrm{Ind}_d(F_0)=\mu-\left(6+\frac{6}{d-2}\right)p_g-\chi_{\mathrm{top}}(A)+1+\epsilon,
$$
where $\chi_{\mathrm{top}}(A)$ is the topological Euler number of the exceptional set $A$ of the minimal resolution of $(X,x)$ and $\epsilon$ is the number of blow-ups in the minimal desingularization of indeterminacy of the rational map $S\dasharrow X$.
\end{lem}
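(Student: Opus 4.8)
The plan is to compute $\mathrm{Ind}_d(F_0)$ by realizing the germ in a global fibration and then matching both sides of the slope equality against classical invariants of the singularity. Since $\mathrm{Ind}_d$ is an algebraic invariant (Lemma~\ref{alginvlem}), I would first fix $n$ so large that every quantity below is determined by $S_n$, and apply Lemma~\ref{alglem} to obtain a relatively minimal plane curve fibration $\bar{f}\colon\bar{S}\to\mathbb{P}^1$ of degree $d$ with $S_n\simeq\bar{S}_n$ such that all fibers other than $\bar{F}_0$ are Lefschetz of type $0$. Then Theorem~\ref{mainthm} and Lemma~\ref{Ind0lem} give $\mathrm{Ind}_d(F_0)=\mathrm{Ind}_d(\bar{F}_0)=K_{\bar{f}}^2-\lambda_d\chi_{\bar{f}}$, where $\lambda_d=6(d-3)/(d-2)$. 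Writing $\bar{\mathcal{L}}$ for the line bundle defining the relative $\mathfrak{g}^2_d$, the image $\bar{X}$ of $\bar{S}$ in $\mathbb{P}^1\times\mathbb{P}^2$ is a member of $|d\,\mathcal{O}(0,1)+\bar{\phi}^{*}\mathfrak{k}|$ on the $\mathbb{P}^2$-bundle $\bar{\phi}\colon\mathbb{P}^1\times\mathbb{P}^2\to\mathbb{P}^1$; since by the genericity in Lemma~\ref{alglem} the total space is smooth at the nodes of the Lefschetz fibers, $\bar{X}$ is normal with a single singular point $\bar{x}$, and by finite determinacy of isolated hypersurface singularities (enlarging $n$ if necessary) $(\bar{X},\bar{x})\simeq(X,x)$, so $\mu$, $p_g$, $\chi_{\mathrm{top}}(A)$ and $\epsilon$ are the same for both.

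Next I would compare $(\bar{S},\bar{f})$ with the hypersurface model through the minimal resolution $\psi\colon\widetilde{X}\to\bar{X}$, with canonical cycle $Z_K$ given by $K_{\widetilde{X}}=\psi^{*}K_{\bar{X}}-Z_K$. Since $\dim R^1\psi_{*}\mathcal{O}_{\widetilde{X}}=p_g$, one has $\chi(\mathcal{O}_{\widetilde{X}})=\chi(\mathcal{O}_{\bar{X}})-p_g$, hence, setting $\chi_{\bar{\phi}|_{\bar{X}}}:=\chi(\mathcal{O}_{\bar{X}})+g-1$ and $K_{\bar{\phi}|_{\bar{X}}}:=K_{\bar{X}}-(\bar{\phi}|_{\bar{X}})^{*}K_{\mathbb{P}^1}$, that $\chi_{\bar{f}}=\chi(\mathcal{O}_{\bar{S}})+g-1=\chi_{\bar{\phi}|_{\bar{X}}}-p_g$. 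For $K_{\bar{f}}^2$ I would prove that the minimal desingularization of the indeterminacy of $\bar{S}\dasharrow\bar{X}$, obtained by the $\epsilon$ blow-ups $\widetilde{S}\to\bar{S}$, coincides with $\widetilde{X}$: relative minimality of $\bar{S}$ together with the minimality of the desingularization prevents the resulting morphism $\widetilde{S}\to\bar{X}$ from contracting any superfluous rational $(-1)$-curve, so it contracts precisely $A$. Granting this, $K_{\bar{f}}^2=K_{\bar{S}}^2+8(g-1)=(K_{\widetilde{X}}^2+\epsilon)+8(g-1)=K_{\bar{\phi}|_{\bar{X}}}^2+Z_K^2+\epsilon$, and the remark following Proposition~\ref{equalprop}, namely $K_{\bar{\phi}|_{\bar{X}}}^2=\lambda_d\chi_{\bar{\phi}|_{\bar{X}}}$, gives
\[
\mathrm{Ind}_d(F_0)=K_{\bar{f}}^2-\lambda_d\chi_{\bar{f}}=Z_K^2+\lambda_d\,p_g+\epsilon .
\]

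Finally I would invoke Laufer's formula for smoothable Gorenstein surface singularities, valid here since $(X,x)$ is a hypersurface singularity, $\mu+1=12p_g+Z_K^2+\chi_{\mathrm{top}}(A)$. Substituting $Z_K^2=\mu+1-12p_g-\chi_{\mathrm{top}}(A)$ and using $12-\lambda_d=6+\tfrac{6}{d-2}$ gives
\[
\mathrm{Ind}_d(F_0)=\mu-\Bigl(6+\tfrac{6}{d-2}\Bigr)p_g-\chi_{\mathrm{top}}(A)+1+\epsilon ,
\]
which is the assertion. The main obstacle is the blow-up bookkeeping in the second step, namely establishing that the minimal desingularization of the indeterminacy of $S\dasharrow X$ is the minimal resolution of $X$, so that the correction term is exactly $+\epsilon$; this is where relative minimality of $S$ and the fact that the birational map is induced by the unique $\mathfrak{g}^2_d$ are essential. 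The remaining ingredients---the behavior of $\chi(\mathcal{O})$ and $K^2$ under blow-ups and resolutions, and the equality $K^2=\lambda_d\chi$ for hypersurfaces in $\mathbb{P}^2$-bundles---are standard or already established above.
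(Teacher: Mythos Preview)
Your proof is correct and follows essentially the same route as the paper: algebraize via Lemma~\ref{alglem}, compare the relative invariants of $\bar f$ with those of the hypersurface model $\bar X\subset\mathbb{P}^1\times\mathbb{P}^2$ through the minimal resolution, obtain $\mathrm{Ind}_d(F_0)=Z_K^2+\lambda_d\,p_g+\epsilon$, and finish with Laufer's formula. The paper simply asserts that the minimal desingularization of indeterminacy of $S\dasharrow X$ coincides with the minimal resolution of $(X,x)$, whereas you flag this as the main point needing care and sketch why it holds; otherwise the arguments are the same.
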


\begin{proof}
Let $(X,x)$ be an isolated hypersurface singularity associated to a fiber germ $(f\colon S\to \Delta,F_0)$ of plane curves of degree $d$.
Let $\pi\colon \widetilde{S}\to X$ be the minimal desingularization of indeterminacy of the rational map $S\dasharrow X$, which is nothing but the minimal resolution of $(X,x)$.
Taking algebraization of the fiber germ $f\colon S\to \Delta$ in the sense of Lemma~\ref{alglem}, we may assume that $\Delta=\mathbb{P}^1$.
Let $\widetilde{f}\colon \widetilde{S}\to \mathbb{P}^1$ and $\overline{f}\colon X\to \mathbb{P}^1$ denote the natural fibrations.
Let $K$ be the canonical cycle of the minimal resolution of $(X,x)$.
Then we have
$$
p_g=\chi_{\overline{f}}-\chi_{\widetilde{f}}=\chi_{\overline{f}}-\chi_f,\quad -K^2=K_{\overline{f}}^2-K_{\widetilde{f}}^2=K_{\overline{f}}^2-K_{f}^2+\epsilon.
$$
On the other hand, we have
$$
K_f^2=\frac{6(d-3)}{d-2}\chi_f+\mathrm{Ind}_d(F_0),\quad K_{\overline{f}}^2=\frac{6(d-3)}{d-2}\chi_{\overline{f}},
$$
where the latter is obtained by a computation similar to that in the proof of Proposition~\ref{equalprop}.
Thus we get

\begin{align*}
\mathrm{Ind}_d(F_0)&=(K_f^2-K_{\overline{f}}^2)-\frac{6(d-3)}{d-2}(\chi_f-\chi_{\overline{f}}) \\
&=K^2+\epsilon+\frac{6(d-3)}{d-2}p_g.
\end{align*}
Combining it with Laufer's formula $\mu=12p_g+K^2+\chi_{\mathrm{top}}(A)-1$ \cite{La}, the desired equality holds.
\end{proof}

\begin{lem}\label{emblem}
Any isolated hypersurface surface singularity is holomorphically equivalent to some isolated hypersurface singularity $(X,x)$ associated to a fiber germ $(f\colon S\to \Delta,F_0)$ of plane curves with the birational morphism $S=\widetilde{S}\to X$ (i.e., $\epsilon=0$).
\end{lem}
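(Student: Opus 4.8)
The plan is to realize $(X_0,0)$ directly as the unique singular point of a hypersurface $X\subset\Delta\times\mathbb{P}^2$ fibered over $\Delta$ by plane curves of large degree $d$; its minimal resolution will then be the desired fiber germ, and the equality $\epsilon=0$ will be automatic, since the relative $\mathfrak{g}^2_d$-map of this fibration is, by construction, the resolution morphism, which has empty indeterminacy locus. By finite determinacy of isolated hypersurface singularities we may assume $(X_0,0)=\{h(x,y,z)=0\}$ with $h$ a polynomial that is $k$-determined for some $k\ge 2$. After a general linear change of coordinates $\{x=0\}$ is a generic hyperplane, so $h(0,y,z)$ is a reduced plane curve germ with isolated singularity at $0$; set $e:=\deg_{(y,z)}h$ and regard $t:=x$ as the base coordinate. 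For large integers $a,d$ put $s_0(t;Y,Z,W):=W^{\,d-e}\bigl(W^{e}h(t,Y/W,Z/W)\bigr)$, a form of degree $d$ in $(Y,Z,W)$ with polynomial coefficients in $t$ satisfying $s_0(t;Y,Z,1)=h(t,Y,Z)$; let $\overline{s}_0$ be its bihomogenization of bidegree $(a,d)$ on $\mathbb{P}^1\times\mathbb{P}^2$, which vanishes at $P:=(0,(0:0:1))$. Let $V:=H^0\bigl(\mathbb{P}^1\times\mathbb{P}^2,\mathcal{O}(a,d)\otimes\mathcal{I}_P^{\,k+1}\bigr)$; for $a,d\gg k$ the system $|V|$ is base point free and very ample away from $P$, hence the same holds for $|V\oplus\mathbb{C}\,\overline{s}_0|$, whose base locus is $\{P\}$.

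By Bertini, for general $\psi\in V$ the surface $\overline{X}:=\{\overline{s}_0+\psi=0\}$ is irreducible and smooth away from $P$, its general fibre over $\mathbb{P}^1$ is a smooth plane curve of degree $d$, and its central fibre $C_0$ is an irreducible plane curve smooth away from $(0:0:1)$ (here one uses that $V$ surjects onto $H^0(\mathbb{P}^2,\mathcal{O}(d)\otimes\mathcal{I}_{(0:0:1)}^{\,k+1})$). In the chart $\{t_1=W=1\}$ near $P$ one has $\overline{X}=\{h(t,Y,Z)+\psi'=0\}$ with $\psi'\in\mathfrak{m}_P^{\,k+1}$, so $(\overline{X},P)\cong(X_0,0)$ by $k$-determinacy. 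Now take a small disc $\Delta\ni 0$ in $\mathbb{P}^1$ and put $X:=\overline{X}\cap(\Delta\times\mathbb{P}^2)$; shrinking $\Delta$, we may assume $0$ is the only critical value of $X\to\Delta$, as these form a finite set. Then $X$ is normal with unique singularity $P$, and $X\subset\Delta\times\mathbb{P}^2$ is a family of degree-$d$ plane curves. Let $\rho\colon S\to X$ be the minimal resolution (it modifies only $P$) and $f\colon S\to\Delta$ the induced fibration. Then $(f,F_0)$ is a fiber germ in $\mathcal{A}_d$, the line bundle $\mathcal{L}:=\rho^{*}(\mathcal{O}_{\mathbb{P}^2}(1)|_X)$ restricts to the $\mathfrak{g}^2_d$ on general fibres, and the rational map $S\dasharrow X$ attached to $f_{*}\mathcal{L}$ is precisely $\rho$; thus $\epsilon=0$ and $\rho$ is the minimal resolution of $(X,P)\cong(X_0,0)$.

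It remains to check that $f$ is relatively minimal. Over $\Delta\setminus\{0\}$ every fibre is smooth, so only $F_0$ could contain a $(-1)$-curve; its components are the strict transform $\widetilde{C_0}$ of $C_0$ and the components of the exceptional divisor of $\rho$. The latter contain no $(-1)$-curve, by minimality of the resolution. And $\widetilde{C_0}$ is birational to the irreducible curve $C_0$, whose geometric genus is $(d-1)(d-2)/2-\delta$, where $\delta$ is the $\delta$-invariant of the unique singularity of $C_0$ (a number independent of $d$); for $d$ large this is positive, so $\widetilde{C_0}$ is non-rational and hence not a $(-1)$-curve. Therefore $F_0$ contains no $(-1)$-curve, $f$ is relatively minimal, and $(f\colon S\to\Delta,F_0)$ with $S=\widetilde{S}$ realizes $(X_0,0)$ as required.

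The main obstacle is to secure \emph{simultaneously} that $\widetilde{S}$ is itself a relatively minimal plane curve fibration and that $\epsilon=0$. The second is obtained for free by building $X$ inside $\Delta\times\mathbb{P}^2$ from the outset, so that the relative $\mathfrak{g}^2_d$-map is the resolution morphism; the first reduces---once $0$ is arranged to be the only critical value---to ruling out a $(-1)$-curve in $F_0$, and the only delicate instance, the strict transform of the central curve, is excluded by forcing $C_0$ to be irreducible of positive geometric genus, which is why $d$ must be taken large. A subsidiary technical point is that the factor $W^{d-e}$ used to raise the degree would, by itself, create a non-isolated singularity along the line at infinity of the central fibre; drawing the perturbation $\psi$ from the fat-point system $|\mathcal{O}(a,d)\otimes\mathcal{I}_P^{\,k+1}|$ both removes this and, being of order $\ge k+1$ at $P$, leaves the analytic type of the singularity there unchanged.
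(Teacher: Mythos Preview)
Your proof is correct and follows essentially the same approach as the paper's: realize the singularity as the unique singular point of a hypersurface in $\Delta\times\mathbb{P}^2$ by bihomogenizing the defining polynomial and perturbing by terms of high order at the singular point (the paper phrases this simply as ``adding sufficiently higher terms to $h$''), then take the minimal resolution and verify relative minimality by making the central plane curve irreducible and non-rational. Your version is more explicit about finite determinacy, the Bertini argument, and the $\epsilon=0$ check, but the underlying strategy is identical to the paper's.
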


\begin{proof}
Let $(X,0)$, $X=\{h(y,z_1,z_2)=0\}\subset \mathbb{C}^3$ be any isolated hypersurface surface singularity.
We may assume that the defining equation $h(y,z_1,z_2)$ is a polynomial.
Taking compactification $\overline{X}$ of $X$ in $\mathbb{P}^1\times \mathbb{P}^2$, the defining equation of $\overline{X}$ can be written by the homogenization $\overline{h}(Y_0,Y_1;Z_0,Z_1,Z_2)$ of $h(y,z_1,z_2)$.
Adding sufficiently higher terms to $h(y,z_1,z_2)$, we may assume that $0\in X\subset  \overline{X}$ is the unique singularity of $\overline{X}$ and the central fiber $\overline{X}\cap (0\times \mathbb{P}^2)=\{\overline{h}(1,0;Z_0,Z_1,Z_2)=0\}$ is irreducible and non-rational.
Thus the composite $\overline{f}=p\circ\pi \colon \overline{S}\to \mathbb{P}^1$ of the minimal resolution $\pi \colon \overline{S}\to \overline{X}$ and the projection $p\colon \overline{X}\to \mathbb{P}^1$ is a relatively minimal plane curve fibration.
Taking the fiber germ $f\colon S\to \Delta$ of $\overline{f}$ at the origin $0$, the assertion follows.
\end{proof}

\begin{prfofthm2}
Let $(X,x)$ be an isolated hypersurface surface singularity with Milnor number $\mu$ and geometric genus $p_g>0$.
Note that $(X,x)$ is not a rational double point.
From Lemma~\ref{emblem}, we may assume that $(X,x)$ is an isolated hypersurface singularity associated to a fiber germ $(f,F_0)$ of plane curves of degree $d$ with $\epsilon=0$.
From Lemma~\ref{Horlem} and the positivity of the Horikawa index, we have
$$
\mu-\left(6+\frac{6}{d-2}\right)p_g-\chi_{\mathrm{top}}(A)+1=\mathrm{Ind}_d(F_0)>0.
$$
Thus we have
$$
\mu-6p_g-\chi_{\mathrm{top}}(A)> \frac{6}{d-2}p_g-1> -1.
$$
Since the left hand side of the above inequality is an integer, we get
$$
\mu-6p_g-\chi_{\mathrm{top}}(A)\ge 0.
$$
\qed
\end{prfofthm2}

\end{document}